\documentclass{amsart}
 
\usepackage{amsmath}
\usepackage{amsthm,amssymb,color,comment}
\usepackage{bbm}
\usepackage{hyperref}
\usepackage[TS1,T1]{fontenc}
\usepackage[latin1]{inputenc}
\usepackage{dsfont}
\usepackage{tikz}
\usepackage{enumitem}
\usepackage[sort]{cite}

\usetikzlibrary[angles,quotes] 

\numberwithin{equation}{section}

\newtheorem{thm}{Theorem}[section]

\newtheorem{lem}[thm]{Lemma}
\newtheorem{cor}[thm]{Corollary}

\newtheorem{Def}[thm]{Definition}

\theoremstyle{definition}
\newtheorem{Ass}[thm]{Assumption}

\DeclareMathOperator{\dist}{dist}

\newcommand{\R}{\mathbb{R}}
\newcommand{\N}{\mathbb{N}}

\newcommand{\diff}{\mathop{}\!\mathrm{d}}


\makeatletter
\newcommand{\doublewidetilde}[1]{{%
  \mathpalette\double@widetilde{#1}%
}}
\newcommand{\double@widetilde}[2]{%
  \sbox\z@{$\m@th#1\widetilde{#2}$}%
  \ht\z@=.9\ht\z@
  \widetilde{\box\z@}%
}
\makeatother
\textwidth 6 in
\evensidemargin 0.2 in 
\oddsidemargin 0.2 in

\parskip 3pt

\author{Miroslav Bul\'{\i}\v{c}ek}
\address{{\it Miroslav Bul{\'\i}{\v{c}}ek:} Mathematical Institute, Faculty of Mathematics and Physics, Charles University, Sokolovsk\'{a} 83, 186 75, Prague, Czech Republic}
\email{mbul8060@karlin.mff.cuni.cz}
\thanks{Miroslav Bul{\'\i}{\v{c}}ek was supported by the project No. 20-11027X financed by GA\v{C}R}

\author{Piotr Gwiazda}
\address{{\it Piotr Gwiazda:} Institute of Mathematics of Polish Academy of Sciences, Jana i J\k edrzeja \'Sniadeckich 8, 00-656 Warsaw, Poland}
\email{pgwiazda@mimuw.edu.pl}
\thanks{Piotr Gwiazda was supported by National Science Center, Poland through project no. 2018/31/B/ST1/02289.}

\author{Jakub Skrzeczkowski}
\address{{\it Jakub Skrzeczkowski:} Faculty of Mathematics, Informatics and Mechanics, University of Warsaw, Stefana Banacha 2, 02-097 Warsaw, Poland}
\email{jakub.skrzeczkowski@student.uw.edu.pl}
\thanks{Jakub Skrzeczkowski was supported by National Science Center, Poland through project no. 2017/27/B/ST1/01569.\\
\indent All authors are grateful to Iwona Chlebicka and B{\l}a\.zej Miasojedow for fruitful discussions and helpful suggestions.
}


\begin{document}

\title[Absence of Lavrentiev phenomenon]{On a range of exponents for absence of Lavrentiev phenomenon for double phase functionals}

\begin{abstract}
For a class of functionals having the $(p,q)$-growth, we establish an improved range of exponents $p$, $q$ for which the Lavrentiev phenomenon does not occur. The proof is based on a standard mollification argument and Young convolution inequality. Our contribution is two-fold. First, we observe that it is sufficient to regularise only bounded functions. Second, we exploit the $L^{\infty}$ bound on the function rather than the $L^p$ estimate on the gradient. Our proof does not rely on the properties of minimizers to variational problems but it is rather a consequence of the underlying  Musielak-Orlicz function spaces. Moreover, our method works for unbounded boundary data, the variable exponent functionals and vectorial problems. In addition, the result seems to be optimal for $p\le d$.
\end{abstract}

\keywords{double phase functionals, $(p,q)$-growth, Lavrentiev gap, Lavrentiev phenomenon, Musielak-Orlicz spaces, variable exponent spaces, density of smooth functions}

\maketitle

\setcounter{tocdepth}{1}
\tableofcontents

\section{Introduction}
\noindent We consider a class of functionals with the so-called $(p,q)$-growth. The prominent example we have in mind is
\begin{equation}\label{eq:functionalG}
\mathcal{G}(u,\Omega) := \int_{\Omega} |\nabla u(x)|^p \diff x + \int_{\Omega} a(x) \, |\nabla u(x)|^q \diff x.
\end{equation}
Here, $\Omega \subset \R^d$ is a bounded Lipschitz domain, $u: \Omega \to \R$ is an argument of the functional $\mathcal{G}$, $a: \Omega \to [0,\infty)$ is a given nonnegative function and $1 \leq p < q <\infty$ are given numbers. Functional $\mathcal{G}$ is the interesting toy model for studying  minimisation of functionals with the so-called non-standard growth. Indeed, depending on whether $a = 0$ or $a > 0$, $\mathcal{G}$ exhibits either the $p$- or the $q$-growth.

A well-known feature of functional $\mathcal{G}$ is the so-called Lavrentiev phenomenon. For instance, there exists a~function $a \in C^{\alpha}(\overline{\Omega})$ with $\alpha\in (0,1)$, exponents $p$, $q$ fulfilling  $p<d<d+{\alpha}<q$ and boundary data $u_0 \in W^{1,q}(\Omega)$ such that 
\begin{equation}\label{eq:lavr_neg}
\inf_{u \in u_0 + W_0^{1,p}(\Omega)} \mathcal{G}(u,\Omega) < \inf_{u \in u_0 + W_0^{1,q}(\Omega)} \mathcal{G}(u,\Omega).
\end{equation}
On the other hand, it is known that if $q \leq p + \alpha \,\frac{p}{d}$, the Lavrentiev phenomenon does not occur for the toy model \eqref{eq:functionalG}, see~\cite{MR2076158}. Under the additional assumption $u_0 \in L^{\infty}(\Omega)$, the range of exponents has been improved to $q \leq p + \alpha$ \cite[Proposition 3.6, Remark 5]{MR3360738}. The latter work heavily depends on the properties of minimizers and the $L^{\infty}$ bound for the minimizer of the functional \eqref{eq:functionalG} form a nontrivial part of the result in \cite{MR3360738}.

In this paper we prove that neither the assumption $u_0 \in L^{\infty}(\Omega)$ nor any further bound on minimizer is irrelevant for the absence of Lavrentiev phenomenon. More precisely, we prove that one does not observe Lavrentiev phenomenon if
\begin{equation}\label{eq:regime_exponents}
q \leq p + \alpha \, \max\left(1, \frac{p}{d} \right)
\end{equation}
and boundary data $u_0 \in W^{1,q}(\Omega)$. In this case, we have
\begin{equation}\label{eq:lavr_pos}
    \inf_{u \in u_0 + W_0^{1,p}(\Omega)} \mathcal{G}(u, \Omega) = \inf_{u \in u_0 + W_0^{1,q}(\Omega)} \mathcal{G}(u, \Omega) = \inf_{u \in u_0 + C_c^{\infty}(\Omega)} \mathcal{G}(u, \Omega).
\end{equation}
This significantly improves the available results for the case $p < d$. Moreover, our proof is elementary as it is based on a simple regularisation argument together with Young's convolution inequality. In particular, we do not use estimates on minimizers of functional \eqref{eq:functionalG}. consequently, our method easily extends to the vector-valued maps and cover variable-exponent functionals as well, see Section~\ref{sect:var-exponent-example}.

The question of whether \eqref{eq:lavr_neg} or \eqref{eq:lavr_pos} holds true is related to the density of $C_c^{\infty}(\Omega)$ in the Musielak--Orlicz--Sobolev space $W^{1,\psi}_0(\Omega)$ related to the functional \eqref{eq:functionalG}, see \eqref{eq:musor_norm}--\eqref{eq:norm_mus_sobolev} for definitions. In this context, we prove that the density result hold true for $p, q$ satisfying \eqref{eq:regime_exponents} which is again better then so-far known regime of exponents announced in \cite{MR3847479}. 

Let us discuss the result of the paper within the context of previous works related to this topic. The first studies concerning functionals changing their ellipticity rate at each point have been carried out by Zhikov \cite{MR681795, MR1209262, MR1486765, MR1350506}. In particular, in \cite{MR1350506} he observed that it may happen that \eqref{eq:lavr_pos} does not hold, extending thus similar observations made by Lavrentiev \cite{MR1553097} and Mania \cite{mania1934}. Another related direction of research is the regularity of minimizers.  Although the fundamental results for minimizers were obtained by Marcellini \cite{MR969900, MR1094446, MR1240398, MR1401415} more than 20 years ago, it is in fact still an active topic of research, see for instance \cite{MR3360738, MR3294408, MR3775180, MR2377404, MR2898773, MR2291779, 	MR3348922, MR3944281,	MR4074606, MR4149062, MR4175825}.

Going back to the functional \eqref{eq:functionalG}, the available results for boundary data $u_0 \in W^{1,q}(\Omega)$ provide both positive and negative answers to the question whether \eqref{eq:lavr_pos} holds true. On the one hand, if $q \leq p + \frac{p\,\alpha}{d}$ then \eqref{eq:lavr_pos} is indeed valid \cite{MR3918367}. On the other hand, if $q > p + \alpha \, \mbox{max}\left(1, \frac{p-1}{d-1}\right)$ then counterexample in \cite[Theorem 34]{MR4153906} shows that \eqref{eq:lavr_pos} is violated (see also \cite[Lemma 7]{MR2076158} for a weaker result concerning the case $p < d < d +{\alpha} < q$ obtained with more elementary methods). In this paper we establish \eqref{eq:lavr_pos} for $q \leq p + \alpha \,\max{\left(1, \frac{p}{d}\right)}$ which partially fills the gap between currently known positive and negative results concerning the Lavrentiev phenomenon. Moreover, in view of~\cite[Theorem 34]{MR4153906}, our result is sharp for $p \leq d$.

Next, we wish to address two issues that appeared in previous papers on this topic. First, in \cite[Lemma 4.1]{MR3294408} there is the following claim: for every $\varepsilon > 0$ and ball $B_r(x) \subset \Omega$, there exists $p_{\varepsilon} < q_{\varepsilon}$ satisfying 
\begin{equation}\label{eq:wrong_example_mingione}
\varepsilon \, p_{\varepsilon} > q_{\varepsilon} - p_{\varepsilon} - \alpha_{\varepsilon} \, \frac{p_{\varepsilon}}{d} > 0,
\end{equation}
a coefficient $a_{\varepsilon} \in C^{\alpha}(\overline{\Omega})$ and a boundary data $u_0 \in W^{1,q}(B_r(x))\cap L^{\infty}(B_r(x))$ such that
$$
\inf_{u \in u_0 + W_0^{1,p_{\varepsilon}}(B_r(x))} \mathcal{G}(u, \Omega) < \inf_{u \in u_0 + W_0^{1,p_{\varepsilon}}(B_r(x)) \cap W^{1,q_{\varepsilon}}_{\text{loc}}(B_r(x))} \mathcal{G}(u, \Omega).
$$
Although it is a very nice result, it does not prove that range of exponents $q \leq p + \alpha\,\frac{p}{d}$ is optimal for absence of the Lavrentiev phenomenon and it does not contradict our result about the range stated in \eqref{eq:regime_exponents}. In fact, authors refer to the counterexample from~\cite{MR2076158} constructed for exponents satisfying $p < d < d+{\alpha} < q$ i.e. exponents that do not meet our range because the distance between $p$ and $q$ is greater than $\alpha$. In fact, it is shown that there exists $p_{\varepsilon}$ and $q_{\varepsilon}$ but it follows also from the proof that they are constructed in the following way: for  $\delta > 0$ to be specified later, we define $p_{\varepsilon}:=d-\delta$, $q_{\varepsilon}:= d+{\alpha} + \delta$ and find a proper counterexample constructed in~\cite{MR2076158}. Then, when $p_{\varepsilon} \geq 1$, we have
$$
\varepsilon \,  p_{\varepsilon} \geq \varepsilon, \qquad q_{\varepsilon} - p_{\varepsilon} - \alpha_{\varepsilon} \, \frac{p_{\varepsilon}}{d} = 2\,\delta + \alpha \, \frac{\delta}{d} = \delta \, \left(2 + \frac{\alpha}{d} \right)
$$
so that \eqref{eq:wrong_example_mingione} is satisfied if we let $\delta := \frac{\varepsilon}{2\,\left(2 + \alpha/d\right)}$. Consequently, $p_{\varepsilon} \to d$ as $\varepsilon\to 0$, which is in perfect coincidence with \eqref{eq:regime_exponents}.

Second, we also want compare our result with~\cite{MR3360738}, where authors proved that the Lavrentiev phenomenon is not observed for $q \leq p + {\alpha}$ in the particular cases when minimizers of \eqref{eq:functionalG} are bounded, but this requires an extra assumption on the boundary data, namely that the boundary data $u_0$ is bounded and apply the maximum principle \cite{MR2197286}. In addition, reasoning in \cite{MR3360738} is based on the so-called Morrey type estimate on the gradient of minimizer which is not an obvious result itself. Comparing to our work, we prove that the Lavrentiev phenomenon does not occur independently of the properties of minimizers or boundedness of boundary data. Our methods are elementary and are based on simple estimates on convolutions. We point out that one could naively think that our result is a consequence of \cite{MR3360738} and a simple approximation argument (boundary data $u_0 \in W^{1,q}(\Omega)$ is approximated with a sequence $\{u_{0,n}\}_{n \in \N} \subset W^{1,q}(\Omega) \cap L^{\infty}(\Omega)$) but it is not necessarily true that sequence of minimizers has then a subsequence converning again to a minimizer of the limit problem.

Finally, we want to point out and emphasize the main novelties of the paper. Standard methods \cite{MR3918367,MR3294408} for proving \eqref{eq:lavr_pos} are based on regularization of arbitrary function $u \in W_0^{1,p}(\Omega)$ satisfying $\mathcal{G}(u,\Omega) < \infty$ with a sequence of smooth functions $u^{\varepsilon} = u \ast \eta_{\varepsilon}$ and passing to the limit $\mathcal{G}(u^{\varepsilon}, \Omega) \to \mathcal{G}(u, \Omega)$ as $\varepsilon \to 0$. The latter is not trivial because the integrand in \eqref{eq:functionalG} is $x$-dependent. Therefore, one approximates locally the integrand with function that does not depend on $x$, see Lemma~\ref{lem:existence_of_minimizer}. This approximation requires good estimate on $\left\|\nabla u^{\varepsilon}\right\|_{\infty}$ which results in constraint on exponents $p$ and $q$. The estimate on gradient is obtained by writing $\nabla u^{\varepsilon} = \nabla u \ast \eta_{\varepsilon}$ and using the fact that $\nabla u \in L^p(\Omega)$. Our main contribution is an observation that it is sufficient to approximate only bounded functions $u$ (i.e. $u \in L^{\infty}(\Omega)$). It turns out that for $p < d$, it is more optimal to write $\nabla u^{\varepsilon} = u \ast \nabla\eta_{\varepsilon}$ and exploit the estimate $u \in L^{\infty}(\Omega)$ rather that $\nabla u \in L^p(\Omega)$. We remark that these observations have been already used in our recent paper on parabolic equations \cite{MR4252140} but at that point we did not observe that similar ideas may bring new information to analysis of the Lavrentiev phenomenon.

The structure of the paper is as follows. In Section~\ref{sect:mainresult} we present the main result, Theorem~\ref{thm:main_theorem}. The theorem holds true under rather complicated assumption so in Section~\ref{sect:examples} we discuss two representative examples. In Section~\ref{sect:mus-orlicz} we review the most important properties of the Musielak--Orlicz--Sobolev spaces. We explain here why it is sufficient to approximate only bounded functions, see Lemmas~\ref{lem:density_of_bounded_in_musielak}~and~\ref{lem:density_implies_lavr}. Then, in Section~\ref{sect:special_case} we present the proof of the main result in the particular case of functional $\mathcal{G}$ as in \eqref{eq:functionalG} and $\Omega = B$ (i.e. a unit ball). In this case we may neglect many technical difficulties and clearly present main ideas. Section~\ref{sect:main_result_general} is devoted to the proof of Theorem~\ref{thm:main_theorem} in the general case. Finally, in Section~\ref{sect:var-exp} we briefly discuss how to extend our work to the case of vectorial problems.

\section{Main result}\label{sect:mainresult}

Let us first set notation. We always assume that $\Omega \subset \R^d$ is a bounded Lipschitz domain and $d$ is the dimension of the space. We write $B$ for the unit open ball centered at $0$. For balls with radius $r$ we use $B_r$ and if the center is at some general point $x$, we write $B_r(x)$ so that $B_1(0) = B$ and $B_r(0) = B_r$. Concerning function spaces, we write $C_c^{\infty}(\Omega)$ for the space of smooth compactly supported functions, $W^{1,p}(\Omega)$ and $W^{1,p}_0(\Omega)$ are usual Sobolev spaces, $W^{1,\psi}(\Omega)$ and $W^{1,\psi}_0(\Omega)$ are the Musielak--Orlicz--Sobolev spaces defined in Section~\ref{sect:mus-orlicz} while $C^{\alpha}(\overline{\Omega})$ is the space of H{\"o}lder continuous functions on $\overline{\Omega}$ with exponent $\alpha\in (0,1]$. Finally, $\eta_{\varepsilon}: \R^d \to \R$ is a usual mollification kernel.

We already introduced the key motivation of the paper, i.e., the functional \eqref{eq:functionalG}, but the main result concern more general cases. We focus in the paper on functionals being of the form
\begin{equation}\label{eq:general_functional}
\mathcal{H}(u, \Omega) = \int_{\Omega} \psi(x, |\nabla u(x)|) \diff x,
\end{equation}
where $\psi$ is the so-called $N$-function and it satisfies the following assumptions:
\begin{Ass}\label{ass:Nfunction}
We assume that an $N$-function $\psi: \Omega \times \R^+ \to \R^+$ satisfies:
\begin{enumerate}[label=(A\arabic*)]
    \item \label{ass:zero} (vanishing at 0) $\psi(x,\xi) = 0$ if and only if $\xi = 0$,
    \item \label{ass:Nf_conv} (convexity) for each $x$, the map $\R^+ \ni \xi \mapsto \psi(x,\xi)$ is convex,
    \item \label{ass:lowerbound} (autonomous lower-bound) there is a strictly increasing and continuous function $m_{\psi}: \R^+ \to \R^+$ such that $m_{\psi}(0) = 0$, $m_{\psi}(\xi) \leq \psi(x,\xi)$ and $\frac{m_{\psi}(\xi)}{\xi} \to  \infty$ as $\xi \to \infty$,
    \item \label{ass:Nf_pq} ($p-q$ growth) there exist exponents $1 < p < q < \infty$ and $\xi_0 \geq 1$ and constants $C_1$ and $C_2$ such that
    $$
    C_1\, |\xi|^p  \leq \psi(x,\xi)  \mbox{ for } \xi \geq \xi_0, \qquad  \psi(x,\xi) \leq C_2 \, (1+|\xi|^q) \mbox{ for all } \xi \geq 0,
    $$
    \item \label{ass:Nf_delta2} ($\Delta_2$ condition) there exists a constant $C_4$ such that
    $$
    \psi(x, 2\xi) \leq C_4 \, \psi(x,\xi).
    $$
\end{enumerate}
\end{Ass}
\begin{Ass}\label{ass:continuity_abstract}
Let $\psi$ be an $N$-function satisfying Assumption \ref{ass:Nfunction}. We assume that for all $D>1$, there are constants $M = M(p,q,D)$ and $N = N(p,q,D)$ such that
$$
\psi(z,\xi) \leq M \, 
\psi(y,\xi) + N
$$
for all balls $B_{\gamma}(x)$, all $y, z \in \overline{B_{\gamma}(x)} \cap \overline{\Omega}$, all $\xi \in \left[0, D\gamma^{-\min\left(1,\, \frac{d}{p}\right)}\right]$ and all $\gamma \in \left(0,\frac{1}{2}\right)$.
\end{Ass}
Let us make few comments on Assumptions~\ref{ass:Nfunction} and~\ref{ass:continuity_abstract}. Conditions \ref{ass:zero}--\ref{ass:lowerbound} are standard in the theory of Orlicz spaces while \ref{ass:Nf_pq} reflects growth of the $N$-function being trapped between $p-$ and $q-$growth. Condition \ref{ass:Nf_delta2} ensures good functional analytic properties in $W_0^{1,\psi}(\Omega)$ cf. Lemma \ref{lem:conv_mus_orlicz_spaces_Delta2}. Assumption~\ref{ass:Nfunction} thus reflects the basic functional setting. The real cornerstone of the paper is however Assumption \ref{ass:continuity_abstract}. It is in fact an abstractly formulated condition on continuity of $\psi$. To understand it better, we note  that it is always possible to estimate for all $x\in \Omega$
$$
 \inf_{y \in \overline{B_{\gamma}(x)} \cap \overline{\Omega}} \psi(y,\xi)
 \leq \psi(x,\xi).
$$
Assumption~\ref{ass:continuity_abstract} states that the above  estimate can be inverted (with a suitable constant). As it seems to be hard to verify it directly, we provide two model examples of $N$-functions $\psi$ satisfying this condition in Section~\ref{sect:examples}. Nevertheless, we would like to emphasize that the prototypic functional \eqref{eq:functionalG} satisfying \eqref{eq:regime_exponents} fulfils also Assumption~\ref{ass:Nfunction}.

The main result of this paper reads:
\begin{thm}\label{thm:main_theorem}
Suppose that $p \leq q + \alpha \max\left(1, \frac{p}{d} \right)$. Let $\mathcal{H}$ be a functional defined with \eqref{eq:general_functional} with $\psi$ satisfying Assumptions \ref{ass:Nfunction}, \ref{ass:continuity_abstract}. Then, for all $u_0 \in W^{1,q}(\Omega)$ we have
$$
    \inf_{u \in u_0 + W_0^{1,p}(\Omega)} \mathcal{H}(u,\Omega) = \inf_{u \in u_0 + W_0^{1,q}(\Omega)} \mathcal{H}(u,\Omega) = \inf_{u \in u_0 + C_c^{\infty}(\Omega)} \mathcal{H}(u,\Omega).
$$
Moreover, space $C_c^{\infty}(\Omega)$ is dense in the Musielak--Orlicz--Sobolev space $W^{1,\psi}_0(\Omega)$.
\end{thm}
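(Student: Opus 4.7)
I would prove the density claim; this then yields the equality of the three infima via Lemma~\ref{lem:density_implies_lavr}. By Lemma~\ref{lem:density_of_bounded_in_musielak}, bounded functions are already dense in $W^{1,\psi}_0(\Omega)$, so it suffices to approximate an arbitrary bounded $u\in L^{\infty}(\Omega)\cap W^{1,\psi}_0(\Omega)$ by $C_c^{\infty}(\Omega)$ functions in the modular topology. The approximation will be the standard mollification $u^{\varepsilon}:=u\ast\eta_{\varepsilon}$, after $u$ is pre-shrunk by $O(\varepsilon)$ away from $\p\Omega$ so that $u^{\varepsilon}\in C_c^{\infty}(\Omega)$; the small truncation error is absorbed using the $\Delta_2$ condition \ref{ass:Nf_delta2} and absolute continuity of the modular.

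\textbf{The key estimate.} Two applications of Young's convolution inequality, writing the derivative of the convolution in the two different ways, give
$$
\|\nabla u^{\varepsilon}\|_{L^{\infty}}\leq\|u\|_{L^{\infty}}\|\nabla\eta_{\varepsilon}\|_{L^1}\leq C\|u\|_{L^{\infty}}\varepsilon^{-1}
$$
and
$$
\|\nabla u^{\varepsilon}\|_{L^{\infty}}\leq\|\nabla u\|_{L^p}\|\eta_{\varepsilon}\|_{L^{p'}}\leq C\|\nabla u\|_{L^p}\varepsilon^{-d/p}.
$$
Taking the minimum yields $\|\nabla u^{\varepsilon}\|_{L^{\infty}}\leq C\,\varepsilon^{-\min(1,d/p)}$, precisely the range of $\xi$ allowed by Assumption~\ref{ass:continuity_abstract}. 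This is the point where the $L^{\infty}$ bound on $u$ beats the $L^p$ bound on $\nabla u$ whenever $p\leq d$, and is the main novelty exploited in the proof.

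\textbf{Modular convergence.} Cover $\Omega$ by a finite family of balls $B_{\varepsilon}(x_i)$ of uniformly bounded overlap. On each ball $|\nabla u^{\varepsilon}|$ lies in the admissible range, so Assumption~\ref{ass:continuity_abstract} freezes the $x$-dependence:
$$
\int_{B_{\varepsilon}(x_i)}\psi(y,|\nabla u^{\varepsilon}(y)|)\diff y\leq M\int_{B_{\varepsilon}(x_i)}\psi(x_i,|\nabla u^{\varepsilon}(y)|)\diff y+N|B_{\varepsilon}(x_i)|.
$$
Convexity of $\psi(x_i,\cdot)$ together with Jensen's inequality (using that $\eta_{\varepsilon}$ is a probability density) bounds $\psi(x_i,|\nabla u^{\varepsilon}(y)|)$ by $\int\psi(x_i,|\nabla u(y-z)|)\eta_{\varepsilon}(z)\diff z$, and Fubini turns the right-hand side into an integral of $\psi(x_i,|\nabla u|)$ over $B_{2\varepsilon}(x_i)$. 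A second application of Assumption~\ref{ass:continuity_abstract}, in the opposite direction, replaces $\psi(x_i,\cdot)$ by $\psi(z,\cdot)$. Summing over $i$ and letting $\varepsilon\to 0$ gives $\limsup_{\varepsilon\to 0}\mathcal{H}(u^{\varepsilon},\Omega)\leq \mathcal{H}(u,\Omega)$; the matching $\liminf$ follows from convexity and Fatou's lemma applied to $\nabla u^{\varepsilon}\to\nabla u$ a.e., yielding modular convergence and, via $\Delta_2$, norm convergence in $W^{1,\psi}_0(\Omega)$.

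\textbf{Main obstacle.} The delicate step is the reverse application of Assumption~\ref{ass:continuity_abstract} to $\psi(x_i,|\nabla u(z)|)$, since $|\nabla u|$ need not respect the range $[0,D\varepsilon^{-\min(1,d/p)}]$. I would handle this by a truncation: decompose $\nabla u = \nabla u\,\mathbf{1}_{\{|\nabla u|\leq R_{\varepsilon}\}}+\nabla u\,\mathbf{1}_{\{|\nabla u|>R_{\varepsilon}\}}$ with $R_{\varepsilon}\nearrow\infty$ slowly enough that $R_{\varepsilon}\leq D\varepsilon^{-\min(1,d/p)}$, and use the $\Delta_2$ condition together with absolute continuity of the modular to show that the contribution of the large-gradient part is controlled by $\int_{\{|\nabla u|>R_{\varepsilon}\}}\psi(y,|\nabla u|)\diff y\to 0$. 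The interchange of the limits $R\to\infty$ and $\varepsilon\to 0$ is the technical heart of the general-case argument carried out in Section~\ref{sect:main_result_general}.
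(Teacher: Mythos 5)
Your overall skeleton matches the paper's: reduce to bounded $u$ (Lemma~\ref{lem:density_of_bounded_in_musielak}), mollify with a shrinking step to stay compactly supported (the paper's ``mollification with squeezing'' in Definitions~\ref{def:mol_in_sp} and~\ref{def:mol_in_star_shaped}, organised via the star-shaped decomposition of Lemma~\ref{lem:star-sh-dec}), exploit the two forms of Young's inequality to get $\|\nabla u^\varepsilon\|_\infty \lesssim \varepsilon^{-\min(1,d/p)}$, then freeze the $x$-dependence, apply Jensen, and conclude via Lemma~\ref{lem:density_implies_lavr}. That much is correct and is indeed the argument.

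However, there is a genuine gap in your ``reverse'' step, and your proposed truncation fix does not close it. You freeze the integrand at a fixed point $x_i$ and must then compare $\psi(x_i,|\nabla u(z)|)$ with $\psi(z,|\nabla u(z)|)$. Assumption~\ref{ass:continuity_abstract} provides such a comparison only for $\xi$ in the restricted range $\left[0,D\gamma^{-\min(1,d/p)}\right]$; for larger $\xi$ the only available bound is the one-sided $q$-growth $\psi(x_i,\xi)\leq C_2(1+\xi^q)$ of~\ref{ass:Nf_pq}, and there is no matching lower bound $\psi(z,\xi)\gtrsim \xi^q$ (only the $p$-power lower bound). Consequently the large-gradient contribution $\int_{\{|\nabla u|>R_\varepsilon\}}\psi(x_i,|\nabla u|)\,\diff z$ is \emph{not} controlled by $\int_{\{|\nabla u|>R_\varepsilon\}}\psi(z,|\nabla u|)\,\diff z$, so absolute continuity of the modular of $\psi(z,|\nabla u|)$ does not bound the error term you introduced. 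The $\Delta_2$ condition does not help either, since it rescales $\xi$ but never changes the spatial argument.

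The paper avoids this issue entirely by choosing the frozen integrand more cleverly: not $\psi(x_i,\cdot)$ but the pointwise infimum over the ball, or rather its second convex conjugate $\psi_{x,\gamma}^{**}$ (see \eqref{eq:def_2nd_convex_conj} and Lemma~\ref{lem:conjugates_estimate_from_both_sides}). By construction $\psi_{x,\gamma}^{**}(\xi)\leq\psi(y,\xi)$ for \emph{all} $y$ in the ball and \emph{all} $\xi\geq 0$ — this is item~\ref{convex_conj_estimate_item2} and requires no range restriction and no truncation. Assumption~\ref{ass:continuity_abstract} (through item~\ref{convex_conj_estimate_item1}) is then invoked only in the forward direction, to control $\psi(x,|\nabla u^\varepsilon|)$ by $\psi_{x,\gamma}^{**}(|\nabla u^\varepsilon|)$, and it is precisely here that the $L^\infty$ bound on $\nabla u^\varepsilon$ is consumed. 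In the prototype double phase case this simplification is even more transparent: one picks $x^*$ minimising $a(\cdot)$ on the ball (Lemma~\ref{lem:existence_of_minimizer}), so $\varphi(x^*,\xi)\leq\varphi(y,\xi)$ holds for all $\xi$ automatically. You should adopt this ``infimum/biconjugate'' device rather than freezing at the center and truncating; as written, your argument does not go through.

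A minor further remark: the paper also does not use Fatou for the $\liminf$; it establishes $L^1(\Omega)$-convergence of the integrands $\psi(\cdot,|\nabla u^\varepsilon|)\to\psi(\cdot,|\nabla u|)$ via Vitali's theorem (Corollary~\ref{cor:vitali}), which simultaneously yields~\ref{item_mainthm1_item2} and, after the same uniform integrability observation for $\psi(\cdot,|\nabla u-\nabla u^\varepsilon|)$, the modular convergence~\ref{item_mainthm1_item3}.
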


\section{Examples of \texorpdfstring{$N$}{N}-functions satisfying Assumption \ref{ass:continuity_abstract}}\label{sect:examples}

\subsection{Standard double phase functionals} In this section, we prove that the $N$-function 
$$
\varphi(x,\xi) = |\xi|^p + a(x) \, |\xi|^q. 
$$
satisfies Assumption \ref{ass:continuity_abstract} provided that $a \in C^{\alpha}(\overline{\Omega})$ and $q \leq p +\alpha\,\max\left(1,\, \frac{p}{d} \right)$. The related functional reads
$$
\mathcal{G}(u,\Omega) := \int_{\Omega} |\nabla u(x)|^p + a(x) \, |\nabla u(x)|^q \diff x. 
$$
To show it, we use the following lemma, whose assumptions are evidently satisfied by the example given above. 

\begin{lem}\label{lem:example1}
Suppose that $\psi$ satisfies Assumption~\ref{ass:Nfunction} with exponents $p$ and $q$. Moreover, assume that there is $\alpha \in (0,1]$ and constant $C_3$ such that for all $x_1, x_2 \in \Omega$ and $\xi \geq \xi_0$ we have
\begin{equation}\label{eq:Holder_cont_functional}
    \left|\psi(x_1, \xi) - \psi(x_2,\xi)\right| \leq C_3\, |x_1 - x_2|^{\alpha} \, (1 + |\xi|^q).
\end{equation}
Then, $\psi$ satisfies Assumption~\ref{ass:continuity_abstract} provided that $q \leq p + \alpha \, \max\left(1, \frac{p}{d}\right)$.
\end{lem}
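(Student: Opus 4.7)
\medskip

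\noindent\textbf{Proof plan for Lemma~\ref{lem:example1}.}

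The plan is to start from the H\"older bound \eqref{eq:Holder_cont_functional}, rewrite it in the form $\psi(z,\xi)\le \psi(y,\xi) + C_3 (2\gamma)^\alpha(1+|\xi|^q)$ using $|y-z|\le 2\gamma$, and then show that the additive error $\gamma^{\alpha}|\xi|^q$ can be absorbed either into a multiple of $\psi(y,\xi)$ or into a constant. The exponent restriction $q\le p + \alpha \max(1,p/d)$ will be exactly what is needed to make this absorption work in the range $\xi\in[0, D\gamma^{-\min(1,d/p)}]$ dictated by Assumption~\ref{ass:continuity_abstract}.

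First I would reduce to the regime $\xi \ge \xi_0$, handling the small values $\xi\in[0,\xi_0]$ trivially: by \ref{ass:Nf_pq} we have $\psi(z,\xi)\le C_2(1+\xi_0^q)$, a constant which can be absorbed into the additive term $N$. For $\xi\ge \xi_0$ the H\"older estimate gives
\begin{equation*}
\psi(z,\xi)\le \psi(y,\xi) + C_3\, 2^\alpha\, \gamma^{\alpha}\,(1+|\xi|^q).
\end{equation*}
Since $\gamma<1/2$, the term $C_3 2^{\alpha}\gamma^{\alpha}$ is bounded by a constant that again goes into $N$, so everything comes down to controlling $\gamma^{\alpha}|\xi|^q$ in terms of $\psi(y,\xi)$.

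The central step is to use the admissible range $\xi\le D\gamma^{-\min(1,d/p)}$, which is equivalent to $\gamma^{\alpha}\le D^{\alpha\max(1,p/d)}\, \xi^{-\alpha\max(1,p/d)}$. Multiplying by $|\xi|^q$ yields
\begin{equation*}
\gamma^{\alpha}|\xi|^q \le D^{\alpha\max(1,p/d)}\,|\xi|^{\,q-\alpha\max(1,p/d)}.
\end{equation*}
The assumption $q-p\le \alpha\max(1,p/d)$ then guarantees $q-\alpha\max(1,p/d)\le p$, so for $\xi\ge \xi_0\ge 1$ we may estimate $|\xi|^{\,q-\alpha\max(1,p/d)}\le |\xi|^p$. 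The $p$-growth lower bound in \ref{ass:Nf_pq} finally converts this into $C_1^{-1}\psi(y,\xi)$. Splitting the two cases $p\le d$ and $p>d$ according to which of $1$ and $p/d$ realises the maximum is just bookkeeping; the inequality $q-p\le \alpha\max(1,p/d)$ was tailored precisely so that both cases close at the same borderline exponent $p$.

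Combining these estimates produces constants $M(p,q,D)=1 + C_3\, 2^{\alpha}\, C_1^{-1}\, D^{\alpha \max(1,p/d)}$ and $N(p,q,D)=C_3\, 2^{\alpha}+C_2(1+\xi_0^q)$, which verifies Assumption~\ref{ass:continuity_abstract}. I do not expect a genuine obstacle here: the only subtle point is recognising that the threshold $\gamma^{-\min(1,d/p)}$ in Assumption~\ref{ass:continuity_abstract} was designed so that Young-type insertion of $\gamma^{\alpha}$ against $|\xi|^q$ lands exactly on the coercive $p$-power, which in turn is precisely the condition $q\le p+\alpha\max(1,p/d)$.
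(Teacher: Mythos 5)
Your proposal is correct and takes essentially the same route as the paper: reduce to $\xi\ge\xi_0$, apply the H\"older estimate, and exploit the admissible range $\xi\le D\gamma^{-\min(1,d/p)}$ together with $q-p\le\alpha\max(1,p/d)$ to trade $\gamma^{\alpha}|\xi|^{q}$ for a multiple of $|\xi|^{p}\le C_{1}^{-1}\psi(y,\xi)$. The only cosmetic difference is that the paper carries out the absorption via a convex split $\psi(z,\xi)=\delta\psi(z,\xi)+(1-\delta)\psi(z,\xi)$ and chooses $\delta$ so the $|\xi|^p$ terms cancel exactly (yielding $M=1/\delta$ with $D^{q-p}$ in place of your $D^{\alpha\max(1,p/d)}$), whereas you fold the error directly into the multiplicative constant; both give valid constants $M(p,q,D)$, $N(p,q,D)$.
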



\begin{proof}[Proof of Lemma~\ref{lem:example1}.]
First, we may assume that $\xi > \xi_0$ as for $\xi \in [0,\xi_0]$ we have
\begin{equation}\label{eq:ver_ass_for_xi_small}
\psi(x,\xi) \leq C_2 \, (1+|\xi|^q) \leq C_2(1+|\xi_0|^q) + \psi(y,\xi)
\end{equation}
so the assertion follows with $M=1$ and $N=C_2 \, (1+|\xi_0|^q)$. Hence, we fix $\xi > \xi_0$ and some ball $B_{\gamma}(x)$ such that $B_{\gamma}(x) \cap \overline{\Omega}$ is not empty. Thanks to \eqref{eq:Holder_cont_functional}, we have for all $y, z \in \overline{B_{\gamma}(x)} \cap \overline{\Omega}$:
$$
\psi(z,\xi) \geq \psi(y,\xi) - C_3\,\left(1 + |\xi|^{q}\right) \,  |y - z|^{\alpha}  \geq \psi(y,\xi) - C_3\,\left(1 + |\xi|^{q}\right)  \, \gamma^{\alpha}.
$$
As $\xi \geq \xi_0 \geq 1$ we have in fact
\begin{equation}\label{eq:first_lower_estimate}
\psi(z,\xi)  \geq \psi(y,\xi) - 2\, C_3\,|\xi|^{q} \, \gamma^{\alpha}.
\end{equation}
To bootstrap this estimate, we fix $\delta \in (0,1)$ and write
\begin{equation}\label{eq:step1}
\psi(z,\xi) = \delta \, \psi(z,\xi) + (1-\delta) \, \psi(z,\xi) \geq \delta \, \psi(y,\xi) - \delta \, 2\, C_3 \, |\xi|^{q} \, \gamma^{\alpha} + (1-\delta) \, C_1 |\xi|^p,
\end{equation}
where we used \eqref{eq:first_lower_estimate} to estimate the first term and lower bound $\psi(z,\xi) \geq C_1 \, |\xi|^p$ to estimate the second term. Now, we may write
\begin{equation}\label{eq:approx_crucial_step}
2\, \delta \,C_3\, |\xi|^{q} \, \gamma^{\alpha} = 2\, \delta \, C_3\, |\xi|^{q-p} \,|\xi|^{p} \, \gamma^{\alpha} \leq 2\,\delta \, C_3 \, D^{q-p} \,  \gamma^{\alpha - (q - p)\,\min\left(1,\, \frac{d}{p}\right)} \, |\xi|^p.
\end{equation}
where we used $|\xi| \leq D\,\gamma^{-\min\left(1,\, \frac{d}{p}\right)}$. As $q - p \leq \alpha \, \max\left(1, \frac{p}{d}\right)$, we have
$$
\alpha - (q - p)\,\min\left(1,\, \frac{d}{p}\right) \geq \alpha - \alpha\,  \max\left(1, \frac{p}{d}\right) \, \min\left(1,\, \frac{d}{p}\right) = \alpha - \alpha = 0.
$$
It follows that
$
\gamma^{\alpha - (q - p)\,\min\left(1,\, \frac{d}{p}\right)} \leq 1
$ for $\gamma \in \left(0, \frac{1}{2}\right)$. Hence, coming back to \eqref{eq:step1} we obtain
\begin{multline*}
\psi(z,\xi) \geq \delta \, \psi(y,\xi) - \delta \, 2\, C_3\, D^{q-p} \, |\xi|^p+ (1-\delta) \, C_1 \, |\xi|^p = \\ = \delta \, \psi(y,\xi)  + \left( (1 - \delta)\,C_1 - \delta \,2\, C_3\,D^{q-p} \right)\, |\xi|^p.
\end{multline*}
We choose $\delta = \frac{C_1}{C_1 + 2\,C_3\,D^{q-p}}$ so that $\left((1 - \delta)\,C_2 - \delta \, C_3\,D^{q-p} \right)\, |\xi|^p = 0$. Hence, for all $y, z \in \overline{B_{\gamma}(x)} \cap \overline{\Omega}$
$$
\psi(z,\xi) \geq \delta \, \psi(y,\xi)
$$
so combining with \eqref{eq:ver_ass_for_xi_small}, the proof is concluded with $M = \max\left(1/\delta, 1\right)$ and $N = C_2\left(1+|\xi_0|^q\right)$. 
\end{proof}

\subsection{Variable exponent double phase functionals}\label{sect:var-exponent-example}
In this section we prove that $N$-function
\begin{equation}\label{eq:var_exp_Nfun}
\phi(x,\xi) := |\xi|^{p(x)} +a(x)\,|\xi|^{q(x)}
\end{equation}
satisfies our Assumption \ref{ass:continuity_abstract}. The related functional reads:
\begin{equation}\label{eq:var_exp_fun}
\mathcal{J}(u,\Omega):= \int_{\Omega} \left[|\nabla u|^{p(x)} +a(x)\,|\nabla u|^{q(x)}\right] \diff x.
\end{equation}
\begin{Ass}\label{ass:var_exp}
We assume that:
\begin{enumerate}[label=(B\arabic*)]
\item ($p-q$ growth) there exist $p, q$ with $1 < p \leq q$ such that the functions $p(x), q(x): \Omega \to [1, \infty)$ satisfy $p \leq p(x) \leq q(x) \leq q$,
\item ($\log$-H{\"o}lder continuity) there are constants $C_p, C_q$ such that for all $x, y \in \Omega$ with $|x-y| \leq \min\left(\mbox{diam}\, \Omega, \frac{1}{2} \right)$ we have
$$
|p(x) - p(y)| \leq -\frac{C_p}{\log |x-y|}, \qquad \qquad |q(x) - q(y)| \leq -\frac{C_q}{\log |x-y|}.
$$
\item ($\alpha$-H{\"o}lder continuity) $a \in C^{\alpha}(\overline{\Omega})$ with constant $|a|_{\alpha}$.
\end{enumerate}
\end{Ass}

\begin{lem}\label{lem:infimum_lem_var_exp}
Under Assumption \ref{ass:var_exp}, $N$-function $\phi$ defined with \eqref{eq:var_exp_Nfun} satisfies Assumption \ref{ass:continuity_abstract} for $q$ and $p$ such that $q \leq p + \alpha \, \max\left(1, \frac{p}{d}\right)$.
\end{lem}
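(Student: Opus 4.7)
The plan is to mimic the structure of the proof of Lemma~\ref{lem:example1}, but with two additional ingredients: log-Hölder control of the variable exponents and a careful case split near $|\xi| = 1$. For $|\xi| \leq 1$, both $\phi(z,\xi)$ and $\phi(y,\xi)$ are bounded by a constant depending only on $q$ and $\|a\|_{\infty}$, so the conclusion follows with $M = 1$ and $N$ this constant. Thus all substantive work is in the regime $|\xi| > 1$, fixing $\gamma \in (0,\tfrac12)$, a ball $B_{\gamma}(x)$, and $y, z \in \overline{B_{\gamma}(x)} \cap \overline{\Omega}$ with $|\xi| \leq D\gamma^{-\min(1, d/p)}$.

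The first step is to use log-Hölder continuity to trade $p(z)$ for $p(y)$ (and $q(z)$ for $q(y)$). Writing
$$|\xi|^{p(z) - p(y)} = \exp\bigl(|p(z) - p(y)| \cdot \log|\xi|\bigr),$$
and using $|p(z) - p(y)| \leq C_p / \log(1/|y - z|) \leq C_p / \log(1/\gamma)$ together with $\log|\xi| \leq \log D + \min(1,d/p)\,\log(1/\gamma)$, I expect the product to be bounded by a constant $C_{p,d,D}$ independent of $\gamma$ and $\xi$. Therefore $|\xi|^{p(z)} \leq C_{p,d,D}\, |\xi|^{p(y)}$, and analogously $|\xi|^{q(z)} \leq C_{q,d,D}\, |\xi|^{q(y)}$.

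The second step handles the coefficient $a(x)$. From $|a(z) - a(y)| \leq |a|_{\alpha}\gamma^{\alpha}$ I split
$$a(z)\,|\xi|^{q(z)} \leq a(y)\,|\xi|^{q(z)} + |a|_{\alpha}\,\gamma^{\alpha}\,|\xi|^{q(z)} \leq C_{q,d,D}\, a(y)\,|\xi|^{q(y)} + |a|_{\alpha}\,\gamma^{\alpha}\,|\xi|^{q(z)}.$$
Only the last term requires more care. Since $q(z) - p(z) \leq q - p$ and $|\xi| > 1$, I would write $|\xi|^{q(z)} \leq |\xi|^{p(z)}\cdot |\xi|^{q-p}$, then use $|\xi| \leq D\gamma^{-\min(1,d/p)}$ to get
$$\gamma^{\alpha}\,|\xi|^{q(z)} \leq D^{q-p}\,\gamma^{\alpha - (q-p)\min(1, d/p)}\,|\xi|^{p(z)}.$$
Under the assumption $q \leq p + \alpha\max(1, p/d)$, the exponent $\alpha - (q-p)\min(1, d/p)$ is non-negative, so $\gamma^{\alpha - (q-p)\min(1,d/p)} \leq 1$ for $\gamma \in (0,\tfrac12)$. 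Applying the first step once more then controls $|\xi|^{p(z)}$ by a constant multiple of $|\xi|^{p(y)} \leq \phi(y,\xi)$. Collecting the bounds yields $\phi(z,\xi) \leq M\,\phi(y,\xi)$ with $M$ depending only on $p, q, D, C_p, C_q, |a|_{\alpha}, d$, which combined with the bounded-$\xi$ case gives Assumption~\ref{ass:continuity_abstract}.

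The main obstacle is more bookkeeping than conceptual: one must track that log-Hölder continuity and the scaling $|\xi| \lesssim \gamma^{-\min(1,d/p)}$ match so that $|p(z) - p(y)|\cdot\log|\xi|$ is bounded independently of $\gamma$. Once this matching is observed, the same bootstrap that drives Lemma~\ref{lem:example1} carries over verbatim, and the range of exponents is identical because it is dictated solely by the interplay between $\gamma^{\alpha}$ and $\gamma^{-(q-p)\min(1, d/p)}$.
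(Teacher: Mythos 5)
Your proof is correct and uses the same essential ingredients as the paper's proof: log-H\"older continuity to trade $p(z)$ for $p(y)$ and $q(z)$ for $q(y)$ at the cost of a bounded multiplicative factor (since $|\xi|\le D\gamma^{-\min(1,d/p)}$ exactly cancels $\log(1/\gamma)$ in the log-H\"older modulus), $\alpha$-H\"older continuity of $a$, and the balance $\gamma^{\alpha-(q-p)\min(1,d/p)}\le 1$ that encodes the exponent restriction. The only genuine difference is algebraic organization: the paper bounds the quotient $\phi(x,\xi)/\phi(y,\xi)$ after factoring out $|\xi|^{q(x)-q(y)}$, whereas you bound $\phi(z,\xi)$ term by term, reducing both $|\xi|^{p(z)}$ and $a(z)|\xi|^{q(z)}$ to multiples of $\phi(y,\xi)$ and absorbing the $|a|_\alpha\gamma^\alpha|\xi|^{q(z)}$ error into $|\xi|^{p(y)}$. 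Your version is a bit more transparent because the absorption of the H\"older error is explicit rather than hidden inside a quotient manipulation, and it requires no convexity-based $\delta$-balancing; in fact, contrary to what you say at the end, you do \emph{not} reuse the bootstrap of Lemma~\ref{lem:example1} — the variable-exponent structure is concrete enough to bound directly, which is also why the paper's proof of this lemma dispenses with the bootstrap. One small bookkeeping point (shared with the paper): for $y,z\in\overline{B_\gamma(x)}$ one has $|y-z|\le 2\gamma$, not $\gamma$, which introduces harmless factors such as $2^\alpha$ and replaces $\log(1/\gamma)$ by $\log(1/(2\gamma))$; this is routine to patch, e.g.\ by restricting to $\gamma<1/4$ and handling the remaining compact range of $\gamma$ separately.
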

\begin{proof}
As in the proof of Lemma \ref{lem:example1}, we only need to consider $\xi \geq 1$. Let us estimate $\frac{\phi(x,\xi)}{\phi(y,\xi)}$ for $x, y \in \Omega$ such that $|x-y| \leq \min\left(\mbox{diam}\, \Omega, \frac{1}{2} \right)$. We have
\begin{equation}\label{eq:quotient_estimate}
\begin{split}
\frac{\phi(x,\xi)}{\phi(y,\xi)} &= \frac{|\xi|^{p(x)} +a(x)\,|\xi|^{q(x)}}{|\xi|^{p(y)} +a(y)\,|\xi|^{q(y)}} = \frac{|\xi|^{q(x)}}{|\xi|^{q(y)}} \frac{|\xi|^{p(x)-q(x)} +a(x)}{|\xi|^{p(y)-q(y)} +a(y)} \leq \\
&\leq |\xi|^{q(x)-q(y)}\, \left[\frac{|\xi|^{p(x)-q(x)}}{|\xi|^{p(y)-q(y)}} + \frac{a(x) - a(y)}{|\xi|^{p(y)-q(y)}} + 1 \right] \\
&\leq |\xi|^{q(x)-q(y)}\, \left[{|\xi|^{p(x)-p(y)}}\,{|\xi|^{q(y)-q(x)}} + \frac{a(x) - a(y)}{|\xi|^{p(y)-q(y)}} + 1 \right]\\
&\leq |\xi|^{-\frac{C_q}{\log|x-y|}}\, \left[{|\xi|^{-\frac{C_p}{\log|x-y|}}}\,{|\xi|^{-\frac{C_q}{\log|x-y|}}} + {|a|_{\alpha}\,|x-y|^{\alpha}} \, {|\xi|^{q(y)-p(y)}} + 1 \right] \\
&\leq |\xi|^{-\frac{C_q}{\log|x-y|}}\, \left[{|\xi|^{-\frac{C_p}{\log|x-y|}}}\,{|\xi|^{-\frac{C_q}{\log|x-y|}}} + {|a|_{\alpha}\,|x-y|^{\alpha}} \, {|\xi|^{\alpha\,\max\left(1,\, \frac{p}{d} \right) }} + 1 \right]
\end{split}
\end{equation}
Now, let $D> 1$ and $\gamma \in \left(0,\frac{1}{2}\right)$. Suppose that $q \leq p + \alpha\, \max\left(1,\, \frac{p}{d}\right)$, $|x-y| \leq \gamma$ and $\xi \in \left[1, D\gamma^{-\min\left(1,\, \frac{d}{p}\right)}\right]$. Then, 
$$
|\xi|^{-\frac{C_q}{\log|x-y|}} \leq \left(D\gamma^{-\min\left(1,\, \frac{d}{p}\right)} \right)^{-\frac{C_q}{\log \gamma}} = D^{-\frac{C_q}{\log \gamma}} \, \gamma^{\frac{C_q}{\log \gamma}\,\min\left(1,\, \frac{d}{p}\right)} = D^{-\frac{C_q}{\log \gamma}} \, e^{C_q\, \min\left(1,\, \frac{d}{p}\right)}.
$$
It follows that there is a numerical constant $E$ such that $D^{-\frac{C_q}{\log \gamma}} \, e^{C_q}, D^{-\frac{C_p}{\log \gamma}} \, e^{C_p} \leq E$ for all $\gamma \in \left(0,\frac{1}{2}\right)$. Using \eqref{eq:quotient_estimate} we obtain
$$
\frac{\phi(x,\xi)}{\phi(y,\xi)} \leq E\,\left(E^2 + |a|_{\alpha}\,\gamma^{\alpha} \, \left(D\gamma^{-\min\left(1,\, \frac{d}{p}\right)} \right)^{\alpha \, \max\left(1,\, \frac{p}{d} \right)} + 1 \right) = E\,\left(E^2 + D^{\alpha \, \max\left(1,\, \frac{p}{d} \right)}\,|a|_{\alpha} + 1 \right) =:M,
$$
where we used $\max\left(1,\, \frac{p}{d} \right) \, \min\left(1,\, \frac{d}{p}\right)= 1$ in the last line. We deduce
$$
\phi(x,\xi) \leq M \, 
\phi(y,\xi).
$$
\end{proof}

\section{Musielak--Orlicz--Sobolev spaces}\label{sect:mus-orlicz}

Our results are based on smooth approximation in the Musielak--Orlicz spaces, so we first recall their definitions and basic properties. For more details, we refer to monographs \cite{MR3931352, chlebicka2019book}. We consider an $N$-function $\psi:\Omega \times \R^+ \to \R$ satisfying \ref{ass:zero}--\ref{ass:Nf_delta2} in Assumption~\ref{ass:Nfunction}. We define the related Luxembourg norm with
\begin{equation}\label{eq:musor_norm}
\| f \|_{\psi} = \inf \left\{\lambda>0: \int_{\Omega} \psi\left(x, \frac{\left|f(x)\right|}{\lambda}\right) \diff x \leq 1 \right\}.
\end{equation}
Finally, the Musielak--Orlicz--Sobolev spaces are defined as
\begin{equation}\label{eq:orsob_spaces}
W^{1,\psi}(\Omega) = \{w \in W^{1,1}(\Omega): \|\nabla w \|_{\psi} < \infty \}, \qquad W^{1,\psi}_0(\Omega) = W^{1,1}_0(\Omega) \cap W^{1,\psi}(\Omega),
\end{equation}
the latter one corresponds to the space of functions vanishing at the boundary. These are normed spaces with norm
\begin{equation}\label{eq:norm_mus_sobolev}
\|u\|_{1,\psi} = \|u\|_{1} + \|\nabla u\|_{\psi}.
\end{equation}
One can think of $W^{1,\psi}(\Omega)$ as the space of functions having gradient integrable with $p$ or $q$ power depending on whether $a = 0$ or not. 

We summarize some properties of the Musielak--Orlicz spaces in the following lemma. They are mainly consequences of~\ref{ass:Nf_delta2} in Assumption~\ref{ass:Nfunction}. The proof can be found in many texts on Orlicz spaces \cite{MR3931352, chlebicka2019book}, yet for the sake of completeness we present the proof in Appendix~\ref{app:proof_of_orlicz_lemma}.

\begin{lem}\label{lem:conv_mus_orlicz_spaces_Delta2}
Let $\psi$ satisfy Assumption \ref{ass:Nfunction}. Then,
\begin{enumerate}[label=(C\arabic*)]
    \item\label{item:mus-or:1} $\|f\|_{\psi} < \infty \iff \int_{\Omega} \psi(x,c|f(x)|) \diff x < \infty$ for some $c>0$ $\iff$ $\int_{\Omega} \psi(x,c|f(x)|) \diff x < \infty$ for all $c >0$,
    \item\label{item:mus-or:2} $\|f_n - f\|_{\psi} \to 0 \iff$ for some $c>0$  $\int_{\Omega} \psi(x,c\,|f_n(x) - f(x)|) \diff x \to 0$ $\iff$ for all $c >0$ $\int_{\Omega} \psi(x,c|f_n(x) - f(x)|) \diff x \to 0$,
    \item\label{item:mus-or:3} if $\|f\|_{\psi} < \infty$ and any of the conditions in (B) is satisfied, we have $\int_{\Omega} \psi(x,|f_n(x)|) \diff x \to \int_{\Omega} \psi(x,|f(x)|) \diff x$,
    \item\label{item:mus-or:4} if $f_n \to f$ a.e. on $\Omega$, $\|f\|_{\psi} < \infty$ and the sequence $\{\psi(x,|f_n(x)|\}_{n \in \N}$ is uniformly integrable then $\|f_n - f\|_{\psi} \to 0$,
    \item\label{item:mus-or:5} if $\|f\|_{\psi} < \infty$ then $f \in L^1(\Omega)$.
\end{enumerate}
\end{lem}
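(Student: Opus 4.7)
The plan is to prove the five items in a sequence that reuses earlier pieces, leveraging two workhorses throughout: convexity of $\psi$ together with $\psi(x,0)=0$, which yields monotonicity of $\xi \mapsto \psi(x,\xi)$ and sub-homogeneity $\psi(x,t\xi)\le t\,\psi(x,\xi)$ for $t\in[0,1]$, and the $\Delta_2$ condition \ref{ass:Nf_delta2} iterated to $\psi(x,2^k\xi)\le C_4^k\,\psi(x,\xi)$ for every $k\in\N$. Together these let one rescale the argument of $\psi$ freely at the cost of a multiplicative constant, which is exactly what every equivalence in the lemma reduces to.

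I would start with \ref{item:mus-or:1}. One direction is immediate from the definition of the Luxembourg norm: if $\|f\|_\psi<\infty$, any $\lambda>\|f\|_\psi$ gives $\int_\Omega \psi(x,|f|/\lambda)\diff x\le 1$. For the converse, assuming $\int_\Omega \psi(x,c_0|f|)\diff x<\infty$ for some $c_0>0$, sub-homogeneity transfers finiteness to any $c\le c_0$ while iterating $\Delta_2$ a finite number of times handles $c>c_0$; finiteness of the norm then follows by picking $\lambda$ large enough that sub-homogeneity forces the defining integral below one. Statement \ref{item:mus-or:5} is a quick corollary of \ref{ass:lowerbound}: $m_\psi(\xi)/\xi\to\infty$ gives $m_\psi(\xi)\ge\xi$ for $\xi\ge\xi_0$, so splitting $\int_\Omega|f|\diff x$ across $\{|f|\le\xi_0\}$ and $\{|f|>\xi_0\}$ bounds it by $\xi_0|\Omega|+\int_\Omega\psi(x,|f|)\diff x<\infty$.

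For \ref{item:mus-or:2}, the norm-to-modular direction uses sub-homogeneity again: once $\|f_n-f\|_\psi<1/c$, writing $c|f_n-f|=c\|f_n-f\|_\psi\cdot|f_n-f|/\|f_n-f\|_\psi$ yields $\int_\Omega\psi(x,c|f_n-f|)\diff x\le c\|f_n-f\|_\psi\to 0$. The reverse is where $\Delta_2$ really pays off: given convergence for a single $c$, iterating $\Delta_2$ finitely often upgrades it to convergence for any $c'>c$, which is equivalent to $\|f_n-f\|_\psi\to 0$. Item \ref{item:mus-or:4} is then Vitali's convergence theorem dressed up: by \ref{item:mus-or:2} it suffices to show $\int_\Omega\psi(x,c|f_n-f|)\diff x\to 0$ for every $c$; the integrand tends to zero almost everywhere by continuity of $\psi$, while the convexity split $\psi(x,c|f_n-f|)\le\tfrac12\psi(x,2c|f_n|)+\tfrac12\psi(x,2c|f|)$ gives uniform integrability (the first term by $\Delta_2$ together with the hypothesis on $\{\psi(x,|f_n|)\}$, the second because it is a fixed $L^1$ function by \ref{item:mus-or:1}), so Vitali applies.

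Finally, for \ref{item:mus-or:3} norm convergence yields $\int_\Omega\psi(x,|f_n-f|)\diff x\to 0$ by \ref{item:mus-or:2}, and the same convexity split as above forces $\{\psi(x,|f_n|)\}$ to be uniformly integrable. Because the argument behind \ref{item:mus-or:5} shows that $\|\cdot\|_\psi$-convergence entails $L^1$-convergence, one extracts a subsequence converging almost everywhere, applies Vitali along it, and uses the standard subsequence-of-subsequence argument to upgrade to full-sequence convergence. The main obstacle, in my view, is the converse half of \ref{item:mus-or:2}: this is the single place where the $\Delta_2$ condition is genuinely indispensable, and it is precisely the mechanism that powers the Vitali arguments underpinning \ref{item:mus-or:3} and \ref{item:mus-or:4}.
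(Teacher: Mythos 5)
Your proposal is correct and follows essentially the same blueprint as the paper: sub-homogeneity from convexity plus $\Delta_2$-iteration handles \ref{item:mus-or:1} and \ref{item:mus-or:2}, the convexity split $\psi(x,|g+h|)\le\tfrac{C_4}{2}\psi(x,|g|)+\tfrac{C_4}{2}\psi(x,|h|)$ powers the Vitali arguments in \ref{item:mus-or:3}--\ref{item:mus-or:4}, and a lower bound on $\psi$ gives \ref{item:mus-or:5}. Two small remarks. For \ref{item:mus-or:5} you use the superlinear minorant $m_\psi$ from \ref{ass:lowerbound} where the paper uses the $p$-growth \ref{ass:Nf_pq}; both work. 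For \ref{item:mus-or:3}, your phrase ``the argument behind \ref{item:mus-or:5} shows that $\|\cdot\|_\psi$-convergence entails $L^1$-convergence'' is loose: applying that argument literally to $g=f_n-f$ leaves the constant $\xi_0|\Omega|$, which does not vanish. One needs either a rescaling to get the actual embedding $\|g\|_1\lesssim\|g\|_\psi$, or (what the paper does, and is simpler) use $m_\psi(|f_n-f|)\to 0$ in $L^1$ together with Markov's inequality to obtain convergence in measure directly, dispensing with the subsequence-of-subsequence step. This does not affect the validity of your conclusion, only its economy.
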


Next two lemmas show that to prove the absence of the Lavrentiev phenomenon, it is sufficient to demonstrate that every $u\in W^{1,\psi}_0(\Omega) \cap L^{\infty}(\Omega)$ can be approximated in the topology of $W^{1,\psi}$ by smooth function from  $C_c^{\infty}(\Omega)$.

First lemma shows, that it is enough to consider only bounded functions. Notice that we do not impose any specific assumption on the $N$-function $\psi$ here.
\begin{lem}\label{lem:density_of_bounded_in_musielak}
Space $W_0^{1,\psi}(\Omega) \cap L^{\infty}(\Omega)$ is dense in $W_0^{1,\psi}(\Omega)$.
\end{lem}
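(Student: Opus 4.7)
The natural strategy is truncation. For $k > 0$ define $T_k: \R \to \R$ by $T_k(s) = \max(-k, \min(s,k))$ and set $u_k := T_k(u)$ for a given $u \in W_0^{1,\psi}(\Omega)$. I would argue in three steps: (i) $u_k \in W_0^{1,\psi}(\Omega) \cap L^{\infty}(\Omega)$, (ii) $u_k \to u$ in $L^1(\Omega)$, (iii) $\nabla u_k \to \nabla u$ in the Luxemburg norm $\|\cdot\|_{\psi}$.

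For (i), clearly $\|u_k\|_{\infty} \leq k$. Since $u \in W_0^{1,1}(\Omega)$, the standard chain rule for truncations gives $u_k \in W_0^{1,1}(\Omega)$ with $\nabla u_k = \nabla u \, \mathbbm{1}_{\{|u| \leq k\}}$ almost everywhere. In particular $|\nabla u_k| \leq |\nabla u|$ pointwise, so by monotonicity of the Luxemburg norm $\|\nabla u_k\|_{\psi} \leq \|\nabla u\|_{\psi} < \infty$, confirming $u_k \in W_0^{1,\psi}(\Omega)$.

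For (ii), I would use that $u \in L^1(\Omega)$ by \ref{item:mus-or:5} together with Poincar\'e's inequality (or directly from $u \in W_0^{1,1}(\Omega)$), the pointwise bound $|u_k| \leq |u|$ and the a.e. convergence $u_k \to u$; dominated convergence then yields $u_k \to u$ in $L^1(\Omega)$.

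For (iii), the key identity is $\nabla u_k - \nabla u = -\nabla u \, \mathbbm{1}_{\{|u| > k\}}$, so $|\nabla u_k - \nabla u| \to 0$ almost everywhere and is dominated by $|\nabla u|$. Fix any $c > 0$. By \ref{item:mus-or:1}, applied to $f = \nabla u \in L^{\psi}$, the function $x \mapsto \psi(x, c|\nabla u(x)|)$ lies in $L^1(\Omega)$; hence it dominates $\psi(x, c |\nabla u_k(x) - \nabla u(x)|)$, which tends to zero a.e. Lebesgue's dominated convergence theorem gives $\int_{\Omega} \psi(x, c|\nabla u_k - \nabla u|) \diff x \to 0$ for every $c > 0$, and then \ref{item:mus-or:2} yields $\|\nabla u_k - \nabla u\|_{\psi} \to 0$. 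Combining (ii) and (iii), $\|u_k - u\|_{1,\psi} \to 0$, which proves the density. The only delicate point is the use of the $\Delta_2$-condition, hidden inside \ref{item:mus-or:1} and \ref{item:mus-or:2}, to pass from modular to norm convergence; without it one would lose control of the scaling by the constant $c$.
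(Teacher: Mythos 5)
Your proof is correct and follows essentially the same truncation strategy as the paper. The paper argues via \ref{item:mus-or:4} (a.e.\ convergence plus uniform integrability of $\psi(x,|\nabla T_k(u)|)$ implies norm convergence), whereas you unpack this by hand using \ref{item:mus-or:1} plus Lebesgue dominated convergence and then \ref{item:mus-or:2}; both arguments lean on the $\Delta_2$-condition in the same way and are equivalent in substance. One small plus on your side: you explicitly verify the $L^1$ convergence $u_k \to u$ needed for the full $\|\cdot\|_{1,\psi}$ norm, a step the paper's proof leaves implicit.
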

\begin{proof}
Let $u \in W_0^{1,\psi}(\Omega)$. Consider truncation of $u$ defined as
\begin{equation}\label{eq:trun_def}
T_k(u) = \begin{cases} 
u  & \mbox{ if } |u| \leq k, \\
k\, \frac{u}{|u|} & \mbox{ if } |u| > k.
\end{cases}
\end{equation}
Clearly, $T_k(u) \in L^{\infty}(\Omega)$. Moreover, chain rule for Sobolev maps implies that $\nabla T_k(u) = \nabla u \, \mathds{1}_{|u|\leq k}$ so that $\nabla T_k(u) \to \nabla u$ a.e. as $k \to \infty$. As $\psi(x,\xi) = 0$ if and only if $x = 0$, we have
$$
0 \leq \psi(x, \left|\nabla T_k(u)\right|) = \psi(x, \left|\nabla u\right|) \, \mathds{1}_{|u| \leq k} \leq \psi(x,\left|\nabla u\right|) 
$$
so that the sequence $\left\{\psi(x, \left|\nabla T_k(u))\right| \right\}_{k \in \N}$ is uniformly integrable. Application of~\ref{item:mus-or:4} from Lemma~\ref{lem:conv_mus_orlicz_spaces_Delta2}  concludes the proof.
\end{proof}

\begin{lem}\label{lem:density_implies_lavr}
Suppose that $\psi$ satisfies \ref{ass:zero}--\ref{ass:Nf_delta2} in Assumption \ref{ass:Nfunction} and that for every $u\in W_0^{1,\psi}(\Omega) \cap L^{\infty}(\Omega)$ there exists a sequence $\{u^n\}_{n=1}^{\infty} \subset C_c^{\infty}(\Omega)$ such that $\|u^n-u\|_{1,\psi} \to 0$ as $n\to \infty$. Then, the space $C_c^{\infty}(\Omega)$ is dense in $W_0^{1,\psi}(\Omega)$ and the Lavrentiev phenomenon does not occur, i.e., for all $u_0 \in W^{1,q}(\Omega)$ we have
$$
\inf_{u\in u_0 + W^{1,p}_0(\Omega)} \mathcal{H}(u, \Omega) =
\inf_{u\in u_0 + W^{1,q}_0(\Omega)} \mathcal{H}(u, \Omega) = \inf_{u\in u_0 + C_c^{\infty}(\Omega)} \mathcal{H}(u, \Omega).
$$
\end{lem}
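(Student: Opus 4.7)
The plan is to reduce both assertions of the lemma to the density of $C_c^\infty(\Omega)$ in $W_0^{1,\psi}(\Omega)$, which I will first upgrade from the hypothesized density for bounded functions by a standard diagonal argument. For the Lavrentiev claim, since $u_0 + C_c^\infty(\Omega) \subset u_0 + W_0^{1,q}(\Omega) \subset u_0 + W_0^{1,p}(\Omega)$, taking infima over larger sets can only decrease them, so
$\inf_{u_0 + W_0^{1,p}(\Omega)} \mathcal{H} \leq \inf_{u_0 + W_0^{1,q}(\Omega)} \mathcal{H} \leq \inf_{u_0 + C_c^\infty(\Omega)} \mathcal{H}$ is automatic. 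It therefore suffices to construct, for each $u \in u_0 + W_0^{1,p}(\Omega)$ with $\mathcal{H}(u,\Omega) < \infty$, a sequence $u_n \in u_0 + C_c^\infty(\Omega)$ with $\mathcal{H}(u_n,\Omega) \to \mathcal{H}(u,\Omega)$; the case $\mathcal{H}(u,\Omega) = \infty$ is irrelevant since $\mathcal{H}(u_0,\Omega) < \infty$ by \ref{ass:Nf_pq} together with $u_0 \in W^{1,q}(\Omega)$.

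To upgrade the density hypothesis, given $u \in W_0^{1,\psi}(\Omega)$ I would use Lemma~\ref{lem:density_of_bounded_in_musielak} to pick $u_k \in W_0^{1,\psi}(\Omega) \cap L^\infty(\Omega)$ with $\|u_k - u\|_{1,\psi} \to 0$, apply the hypothesis on each $u_k$ to obtain $u_k^n \in C_c^\infty(\Omega)$ with $\|u_k^n - u_k\|_{1,\psi} \to 0$ as $n\to\infty$, and extract a diagonal $u_k^{n_k}$ so that $\|u_k^{n_k} - u\|_{1,\psi} \to 0$. This yields density of $C_c^\infty(\Omega)$ in $W_0^{1,\psi}(\Omega)$.

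For the Lavrentiev claim I would set $v := u - u_0$ and verify that $v \in W_0^{1,\psi}(\Omega)$. Since $p \geq 1$, $v \in W_0^{1,1}(\Omega)$; also $\|\nabla v\|_\psi \leq \|\nabla u\|_\psi + \|\nabla u_0\|_\psi < \infty$, where $\|\nabla u\|_\psi < \infty$ follows from $\mathcal{H}(u,\Omega) < \infty$ via \ref{item:mus-or:1} of Lemma~\ref{lem:conv_mus_orlicz_spaces_Delta2}, and $\|\nabla u_0\|_\psi < \infty$ from $\psi(x,|\nabla u_0|) \leq C_2(1 + |\nabla u_0|^q) \in L^1(\Omega)$ by \ref{ass:Nf_pq}. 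The density just proved yields $v_n \in C_c^\infty(\Omega)$ with $\|\nabla v_n - \nabla v\|_\psi \to 0$; setting $u_n := u_0 + v_n \in u_0 + C_c^\infty(\Omega)$ I have $\|\nabla u_n - \nabla u\|_\psi \to 0$, so invoking \ref{item:mus-or:3} of Lemma~\ref{lem:conv_mus_orlicz_spaces_Delta2} delivers $\mathcal{H}(u_n,\Omega) \to \mathcal{H}(u,\Omega)$ and closes the chain of inequalities.

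The only substantive input beyond bookkeeping is the passage from norm convergence in $W^{1,\psi}$ to convergence of the functional $\mathcal{H}$, which rests crucially on the $\Delta_2$ condition \ref{ass:Nf_delta2} through its role in the equivalence between norm and modular convergence stated in Lemma~\ref{lem:conv_mus_orlicz_spaces_Delta2}; without $\Delta_2$ that equivalence would fail and the argument would not close. A minor point to watch is that the approximating sequence $v_n \in C_c^\infty(\Omega)$ of $v = u - u_0$ is only guaranteed to be smooth and compactly supported inside $\Omega$, not to vanish in any stronger sense, but this is exactly what $u_0 + C_c^\infty(\Omega)$ requires.
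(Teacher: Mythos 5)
Your proof is correct, and it differs from the paper's in one substantive way: the paper first invokes the direct method to produce a minimizer $u^* \in u_0 + W_0^{1,p}(\Omega)$ (citing a separate existence theorem), then approximates $u^* - u_0$ by smooth functions; you instead take an arbitrary $u \in u_0 + W_0^{1,p}(\Omega)$ with $\mathcal{H}(u,\Omega) < \infty$ and approximate $u - u_0$, which suffices because the infimum is finite ($\mathcal{H}(u_0,\Omega) < \infty$). Your route buys you a slightly more elementary argument, since no existence of minimizers — and hence no weak lower semicontinuity or coercivity input — is required; the paper's version is no harder but imports an extra ingredient. Beyond that the two proofs run in parallel: the elementary inclusion chain giving the easy inequalities, the identification $u - u_0 \in W_0^{1,\psi}(\Omega)$ via the $q$-growth upper bound for $u_0$ and \ref{item:mus-or:1} for $u$, the passage from $W^{1,\psi}$-norm convergence to convergence of the modular via \ref{item:mus-or:3}, and the reliance on $\Delta_2$ for that last step. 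The paper treats the upgrade from density in $W_0^{1,\psi}\cap L^\infty$ to density in $W_0^{1,\psi}$ as immediate from Lemma~\ref{lem:density_of_bounded_in_musielak}; your explicit diagonal extraction is the same content made precise.
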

\begin{proof}
Thanks to Lemma~\ref{lem:density_of_bounded_in_musielak}, $C_c^{\infty}(\Omega)$ is dense in $W_0^{1,\psi}(\Omega)$. Let $u^* \in W^{1,p}(\Omega)$ be the minimizer of $\mathcal{H}$ i.e.
$$
\inf_{u\in u_0 + W^{1,p}(\Omega)} \mathcal{H}(u,\Omega) = \mathcal{H}(u^*,\Omega).
$$
The minimizer exists by a usual application of direct method in calculus of variations, cf. \cite[Theorem 2.7]{MR3821514}. Note that we always have
$$
\mathcal{H}(u^*,\Omega) = \inf_{u\in u_0 + W^{1,p}(\Omega)} \mathcal{H}(u,\Omega) \leq
\inf_{u\in u_0 + W^{1,q}(\Omega)} \mathcal{H}(u,\Omega) \leq \inf_{u\in u_0 + C_c^{\infty}(\Omega)} \mathcal{H}(u,\Omega)
$$
because $p < q$. To prove the reversed inequality, we write $u^* = u_0 + \overline{u}$ where $u_0 \in W^{1,q}(\Omega)$ and $\overline{u} \in W_0^{1,p}$. Note that $u_0 \in W^{1,\psi}(\Omega)$ (because $W^{1,q}(\Omega) \subset W^{1,\psi}(\Omega)$) and $u^* \in W^{1,\psi}(\Omega)$ (because $\mathcal{H}(u^*,\Omega) < \infty$ cf. Lemma \ref{lem:conv_mus_orlicz_spaces_Delta2} \ref{item:mus-or:1}). It follows that $\overline{u} = u^* - u_0 \in W^{1,\psi}_0(\Omega)$. Now, consider the sequence $\{u_n\}_{n\in \N} \subset C_c^{\infty}(\Omega)$ such that $u_n \to \overline{u}$ in $W^{1,\psi}(\Omega)$ which exists due to the assumptions. It follows that $u_n + u_0 \to \overline{u} + u_0 = u^*$ in $W^{1,\psi}(\Omega)$. In particular, $\mathcal{H}(u_0 + u_n,\Omega) \to \mathcal{H}(u^*,\Omega)$ cf. Lemma~\ref{lem:conv_mus_orlicz_spaces_Delta2} \ref{item:mus-or:3}. Note that $u_0 + u_n \in u_0 + C_c^{\infty}(\Omega)$. It follows that 
$$
\inf_{u\in u_0 + C_c^{\infty}(\Omega)} \mathcal{H}(u,\Omega) \leq \mathcal{H}(u_0 + u_n,\Omega) \to \mathcal{H}(u^*,\Omega) \qquad \mbox{ as } n \to \infty.
$$
\end{proof}


\section{Proof of Theorem \ref{thm:main_theorem} in the special case}\label{sect:special_case}
\noindent In this section we prove Theorem~\ref{thm:main_theorem} in the case when $\Omega = B$ (unit ball centered at $0$) and the $N$-function is defined via the formula
\begin{equation}\label{eq:Nfunction_special_case}
\varphi(x,\xi) = |\xi|^p + a(x) \, |\xi|^q. 
\end{equation}
The corresponding functional then takes the following form
$$
\mathcal{G}(u,B) := \int_B |\nabla u(x)|^p + a(x) \, |\nabla u(x)|^q \diff x. 
$$
Note that, if $a \in C^{\alpha}(\overline{B})$ and $q\leq p + \alpha\, \max\left(1, \,\frac{p}{d}\right)$, it follows from Lemma~\ref{lem:example1} that $\varphi$ satisfies Assumption~\ref{ass:continuity_abstract}.

The main purpose of this section is that we avoid all technical difficulties and focus only on the main parts of the proof. More precisely, we do not need to take care of difficulties coming from:
\begin{itemize}
    \item geometric properties of general Lipschitz domain $\Omega$,
    \item situation when for general $N$-function $\psi$ there is no local minimizer of the map $x \mapsto \psi(x,\xi)$ valid for all values of $\xi$.
\end{itemize}

We start with introducing mollification that will be used to define the approximation.

\begin{Def}[Mollification with squeezing]\label{def:mol_in_sp}
For $\varepsilon \in (0, 1/4)$ we set $\eta_{\varepsilon}(x) = \frac{1}{\varepsilon^d} \eta\left(\frac{x}{\varepsilon}\right)$ where $\eta$ is a usual mollification kernel. Then, for arbitrary $u: \R^d \to \R$, we define $u^{\varepsilon}: \R^d \to \R$ as
$$
u^{\varepsilon}(x) = \int_{\R^d} \eta_{\varepsilon}(y) \, u\left(\frac{x}{1-2 \varepsilon} - y \right) \diff y.
$$
\end{Def}

\begin{lem}\label{lem:geometry_special_case}
Let  $u \in W^{1,1}_0(B)$ and be extended by zero onto $\R^d$. Then, $u^{\varepsilon} \in C_c^{\infty}(B)$. Moreover, $\frac{x}{1-2 \varepsilon} - y  \in B_{5\varepsilon}(x)$ for all $y$ such that $|y|\leq \varepsilon$.
\end{lem}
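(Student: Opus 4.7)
The plan is to recast $u^{\varepsilon}$ as a standard mollification composed with a dilation, from which both the smoothness/support claim and the geometric bound follow immediately.

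First I would make the change of variables $z = \frac{x}{1-2\varepsilon} - y$ in the definition of $u^{\varepsilon}$ to rewrite it as
$$
u^{\varepsilon}(x) = \int_{\R^d} \eta_{\varepsilon}\!\left(\frac{x}{1-2\varepsilon} - z\right) u(z) \, \diff z = (\eta_{\varepsilon} \ast u)\!\left(\frac{x}{1-2\varepsilon}\right).
$$
Since $u \in W^{1,1}_0(B)$ extended by zero lies in $L^1(\R^d)$ and $\eta_{\varepsilon} \in C_c^{\infty}(\R^d)$, the convolution $\eta_{\varepsilon} \ast u$ is in $C^{\infty}(\R^d)$, and hence so is $u^{\varepsilon}$ as the composition with the dilation $x \mapsto x/(1-2\varepsilon)$. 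For the support, $\supp(\eta_{\varepsilon} \ast u) \subset \overline{B} + \overline{B_{\varepsilon}} \subset \overline{B_{1+\varepsilon}}$, so $u^{\varepsilon}(x) = 0$ whenever $|x|/(1-2\varepsilon) > 1+\varepsilon$. This forces $\supp u^{\varepsilon} \subset \overline{B_{\rho}}$ with $\rho = (1-2\varepsilon)(1+\varepsilon) = 1 - \varepsilon - 2\varepsilon^2 < 1$ for $\varepsilon \in (0,1/4)$, which gives $u^{\varepsilon} \in C_c^{\infty}(B)$.

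For the second claim I would perform a direct triangle-inequality estimate: for any $x \in \overline{B}$ (which in particular covers $\supp u^{\varepsilon}$) and $|y| \leq \varepsilon$,
$$
\left|\frac{x}{1-2\varepsilon} - y - x\right| = \left|\frac{2\varepsilon x}{1-2\varepsilon} - y\right| \leq \frac{2\varepsilon|x|}{1-2\varepsilon} + |y| \leq \frac{2\varepsilon}{1-2\varepsilon} + \varepsilon.
$$
Since $\varepsilon \in (0,1/4)$ yields $1-2\varepsilon > 1/2$, the right-hand side is strictly less than $4\varepsilon + \varepsilon = 5\varepsilon$, proving the inclusion $\frac{x}{1-2\varepsilon} - y \in B_{5\varepsilon}(x)$.

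There is no substantive obstacle here; the content is essentially bookkeeping that verifies the dilation factor $1/(1-2\varepsilon)$ pulls $\supp u$ sufficiently inward to absorb the $\varepsilon$-spreading of the mollifier, while each argument is displaced by at most $5\varepsilon$ relative to $x$.
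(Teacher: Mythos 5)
Your proof is correct and follows essentially the same path as the paper's. The only cosmetic difference is in the support argument: you rewrite $u^{\varepsilon}(x) = (\eta_{\varepsilon}\ast u)(x/(1-2\varepsilon))$ and then dilate the Minkowski-sum support bound $\supp(\eta_\varepsilon\ast u)\subset\overline{B}+\overline{B_\varepsilon}$, obtaining $\supp u^\varepsilon\subset\overline{B_{1-\varepsilon-2\varepsilon^2}}$, whereas the paper argues directly that the integrand vanishes identically for $|x|\geq 1-\varepsilon$, obtaining $\supp u^\varepsilon\subset\overline{B_{1-\varepsilon}}$; both yield a compact subset of $B$. The geometric bound is proved by the identical triangle-inequality estimate (and, like the paper, you implicitly need $|x|\leq 1$ to get the $5\varepsilon$ bound, which you make explicit).
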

\begin{proof}
Smoothness follows from standard properties of convolutions cf. \cite[Appendix C.4]{evans1998partial}. To see the compact support, let $|x| \geq 1 - \varepsilon$ and $|y| \leq \varepsilon$. Then,
$$
\left| \frac{x}{1- 2\,\varepsilon} - y \right| \geq \frac{1-\varepsilon}{1 - 2\,\varepsilon} - \varepsilon = \frac{1-\varepsilon}{1 - 2\,\varepsilon} - \frac{\varepsilon-2\varepsilon^2}{1 - 2\,\varepsilon} = 
\frac{1 - 2\,\varepsilon + 2\,\varepsilon^2}{1 - 2\,\varepsilon} = 1 + \frac{2\,\varepsilon^2}{1 - 2\,\varepsilon} > 1
$$
so that $u\left(\frac{x}{1-2 \varepsilon} - y \right) = 0$. It follows that $u^{\varepsilon}$ is supported in $B_{1-\varepsilon}$. To see the second property, we estimate
$$
\left|x - \frac{x}{1-2 \varepsilon} + y \right| \leq |x| \frac{2\varepsilon}{1-2\varepsilon} + |y| \leq 4 \varepsilon + \varepsilon = 5\varepsilon,
$$
where we used $\frac{1}{1 - 2\,\varepsilon} \leq 2$, i.e. $\varepsilon \leq \frac{1}{4}$.
\end{proof}

Before formulating the main theorem of this section, we  state and prove two results: a technical lemma concerning approximating sequence and a simple observation concerning $N$-function $\varphi$. 

\begin{lem}\label{lem:technical_convergence_of_mollification}
Let $u \in W^{1,1}_0(B)$ be such that $\mathcal{G}(u)<\infty$ and consider its extension to $\R^d$. Then,
\begin{enumerate}[label=(D\arabic*)]
    \item \label{item:conv_special_case_1} $\varphi\left(\frac{x}{1-2\,\varepsilon},  \left|\nabla  u\right| \left(\frac{x}{1-2\,\varepsilon}\right) \right) \to \varphi(x,\left|\nabla u(x)\right|)$ in $L^1(\R^d)$,
    \item \label{item:conv_special_case_2} $ \int_{\R^d}  \varphi\left(\frac{x}{1-2\,\varepsilon}-y,  \left| \nabla  u\right| \left(\frac{x}{1-2\,\varepsilon}-y\right)\right) \eta_{\varepsilon}(y) \diff y \to \varphi\left(x, \left| \nabla u\right|(x)  \right)$ in $L^1(\R^d)$.
\end{enumerate}
\end{lem}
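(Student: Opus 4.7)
The plan is to recast both convergences in terms of the single auxiliary function $G(x) := \varphi(x, |\nabla u(x)|)$, extended by zero outside $B$. Since $\mathcal{G}(u) < \infty$, we have $G \in L^1(\R^d)$ with $\supp G \subset \overline{B}$; the extension is unambiguous because $|\nabla u|$ vanishes outside $B$ (as $u$ is extended by zero) and $\varphi(\cdot, 0) \equiv 0$ by \ref{ass:zero}, so any possible undefinedness of $a$ outside $\overline{B}$ is irrelevant.

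For \ref{item:conv_special_case_1}, the integrand on the left-hand side is exactly $G(x/(1-2\varepsilon))$, so the claim reduces to the continuity of the dilation operator on $L^1(\R^d)$:
\[
\int_{\R^d} \bigl|G(x/(1-2\varepsilon)) - G(x)\bigr| \diff x \to 0 \quad \mbox{as } \varepsilon \to 0.
\]
I would establish this by a standard density argument: for $G \in C_c(\R^d)$, dominated convergence (using pointwise-a.e.\ convergence together with a uniform compactly supported majorant, obtained by slightly enlarging $\supp G$) gives the claim; for general $G \in L^1(\R^d)$ one approximates by $C_c(\R^d)$ functions and exploits the change-of-variables identity $\|G(\cdot / (1-2\varepsilon))\|_{L^1} = (1-2\varepsilon)^d\,\|G\|_{L^1} \leq \|G\|_{L^1}$ to control the approximation error uniformly in $\varepsilon$.

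For \ref{item:conv_special_case_2}, the integrand rewrites as the convolution $(G \ast \eta_\varepsilon)(x/(1-2\varepsilon))$, and I would split via the triangle inequality
\[
(G \ast \eta_\varepsilon)\bigl(x/(1-2\varepsilon)\bigr) - G(x) = \bigl[(G \ast \eta_\varepsilon) - G\bigr]\bigl(x/(1-2\varepsilon)\bigr) + \bigl[G(x/(1-2\varepsilon)) - G(x)\bigr].
\]
Integrating absolute values and changing variables $z = x/(1-2\varepsilon)$, the first contribution has $L^1$-norm $(1-2\varepsilon)^d\,\|G \ast \eta_\varepsilon - G\|_{L^1(\R^d)}$, which vanishes as $\varepsilon \to 0$ by the classical $L^1$-convergence of mollification. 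The second contribution tends to zero by \ref{item:conv_special_case_1}, which was just established.

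The only mildly delicate step is the bookkeeping around the dilation, in particular verifying that $G$ extended by zero is globally integrable. Beyond that, the proof relies only on two well-known facts from measure theory — continuity of dilation in $L^1$ and convergence of mollifiers in $L^1$ — together with the triviality $\varphi(\cdot, 0) \equiv 0$. No fine interplay between the exponents $p$ and $q$ or the regularity of $a$ enters here; those ingredients will appear later, when the same mollification is applied to $\nabla u$ itself.
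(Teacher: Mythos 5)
Your proof is correct, and it takes a genuinely different route from the paper's, at least for \ref{item:conv_special_case_1}. The paper proves \ref{item:conv_special_case_1} by asserting pointwise convergence of $\varphi\bigl(\tfrac{x}{1-2\varepsilon}, |\nabla u|(\tfrac{x}{1-2\varepsilon})\bigr)$ and then establishing uniform integrability of the family (via a change-of-variables bound $\int_A \leq \int_{2A}$ and the absolute continuity of the integral of $\varphi(\cdot,|\nabla u|)$), so that Vitali's theorem applies; \ref{item:conv_special_case_2} is then dispatched as a one-line consequence of Young's inequality. You instead recognize both statements as facts about a single scalar function $G = \varphi(\cdot,|\nabla u|) \in L^1(\R^d)$ — namely the $L^1$-continuity of dilation and the $L^1$-convergence of mollification — and prove the dilation continuity by the standard $C_c$-density argument with the exact scaling identity $\|G(\cdot/\lambda)\|_1 = \lambda^d\|G\|_1$. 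Your approach is arguably cleaner: it sidesteps the pointwise-a.e.\ convergence claim (which is not immediate for a general $L^1$ function under dilation, and which the paper asserts without comment), replacing it by pointwise convergence only for the continuous compactly supported approximant where it is trivial. It also makes the role of Young's inequality in \ref{item:conv_special_case_2} explicit via the decomposition into a mollification error $(1-2\varepsilon)^d\|G\ast\eta_\varepsilon - G\|_1$ and the dilation error from \ref{item:conv_special_case_1}. What the paper's Vitali-based route buys is that it recycles the same equiintegrability machinery used later in the proof of Theorem~\ref{res:approx_theorem}, whereas your route is self-contained and appeals only to textbook $L^1$ facts. Both are valid; yours is somewhat tighter on the measure-theoretic details.
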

\begin{proof}
To see~\ref{item:conv_special_case_1}, we note that the convergence holds in the pointwise sense. Moreover, the considered sequence is supported only for $x \in B_{1-2\varepsilon}$. Therefore, to establish convergence in $L^1(\R^d)$, it is sufficient to prove equiintegrability of the sequence $\left\{\varphi\left(\frac{x}{1-2\,\varepsilon},  \left|\nabla  u  \right| \left(\frac{x}{1-2\,\varepsilon}\right) \right)\right\}_{\varepsilon}$ and apply the Vitali convergence theorem. To this end, we need to prove 
$$
\forall_{\eta>0}\, \exists_{\delta>0} \, \forall_{A\subset B, |A|\leq \delta} \qquad \, \int_{A} \varphi\left(\frac{x}{1-2\,\varepsilon}, \left|\nabla u\right| \left(\frac{x}{1-2\,\varepsilon}\right) \right) \diff x \leq \eta.
$$
We fix $\eta$ and arbitrary $A \subset B$. Using change of variables we have 
$$
\int_{A} \varphi\left(\frac{x}{1-2\,\varepsilon},  \left|\nabla u\right| \left(\frac{x}{1-2\,\varepsilon}\right) \right) \diff x = (1-2\,\varepsilon)^d \int_{A/(1-2\,\varepsilon)} \varphi(x, \left|\nabla u\right|(x) )\diff x \leq \int_{2A} \varphi(x,\left|\nabla u\right|(x)) \diff x, 
$$
where for $c \in \R^+$, $cA$ denotes a usual scaled set. By assumptions we have $\mathcal{G}(u) < \infty$, so that if we set
$$
\omega(\tau) := \sup_{C \subset \R^d: |C|\leq \tau} \int_C \varphi(x,\left|\nabla u\right|(x)) \diff x,
$$
then $\omega(\tau)$ is a non-decreasing function, continuous at 0. Therefore, we may find $\tau$ such that $\omega(\tau) \leq 2^{-q} \, \eta$. Then, we choose $\delta = 2^{-d} \, \tau$ to conclude the proof of~\ref{item:conv_special_case_1}. Finally, the convergence result~\ref{item:conv_special_case_2} follows from Young's convolutional inequality and~\ref{item:conv_special_case_1}.
\end{proof}

\begin{lem}\label{lem:existence_of_minimizer}
Let $\varphi$ be given by \eqref{eq:Nfunction_special_case}. Then for all balls $B_{\gamma}(x)$ such that $\overline{B_{\gamma}(x)} \cap \overline{B}$ is nonempty, there exists $x^* \in \overline{B_{\gamma}(x)} \cap \overline{B}$ such that for all $\xi$
$$
\inf_{y \in \overline{B_{\gamma}(x)} \cap \overline{B}} \varphi(y,\xi) = \varphi(x^*,\xi).
$$
\end{lem}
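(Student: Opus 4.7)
The plan is to exploit the very specific structure of $\varphi$ in \eqref{eq:Nfunction_special_case}: the only $x$-dependence sits in the coefficient $a(x)$, which multiplies the $\xi$-dependent factor $|\xi|^q\geq 0$. Consequently, for every fixed $\xi\geq 0$ and every $y$ in the considered set, we have
$$
\varphi(y,\xi)=|\xi|^p+a(y)\,|\xi|^q,
$$
so minimizing $\varphi(\cdot,\xi)$ over $\overline{B_{\gamma}(x)}\cap\overline{B}$ is equivalent (for $\xi\neq 0$) to minimizing $a(\cdot)$ over the same set, while for $\xi=0$ any point is a minimizer because $\varphi(y,0)=0$ for all $y$.

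With this observation at hand, the proof reduces to a classical compactness argument. First, the set $\overline{B_{\gamma}(x)}\cap\overline{B}$ is a nonempty, closed and bounded subset of $\R^d$, hence compact. Second, $a\in C^{\alpha}(\overline{B})\subset C(\overline{B})$, so $a$ is continuous on this compact set. By the Weierstrass extreme value theorem, $a$ attains its infimum at some point $x^*\in\overline{B_{\gamma}(x)}\cap\overline{B}$, that is
$$
a(x^*)=\inf_{y\in \overline{B_{\gamma}(x)}\cap\overline{B}} a(y).
$$
Since the same point $x^*$ minimizes $a$ regardless of $\xi$, we conclude that for every $\xi\geq 0$ and every $y\in \overline{B_{\gamma}(x)}\cap\overline{B}$,
$$
\varphi(x^*,\xi)=|\xi|^p+a(x^*)\,|\xi|^q\leq |\xi|^p+a(y)\,|\xi|^q=\varphi(y,\xi),
$$
which gives the claimed equality of the infimum with $\varphi(x^*,\xi)$.

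There is essentially no obstacle here: the key point is merely to notice that the $\xi$- and $y$-dependence in $\varphi$ separate multiplicatively through $a(y)|\xi|^q$, so that one and the same minimizer of $a$ works simultaneously for all values of $\xi$. This is precisely the property that fails for general $N$-functions $\psi(x,\xi)$ (the second technical difficulty flagged before Definition~\ref{def:mol_in_sp}), and it is exactly what allows us to bypass Assumption~\ref{ass:continuity_abstract} in the special case treated in Section~\ref{sect:special_case}.
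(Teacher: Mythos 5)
Your proof is correct and takes essentially the same approach as the paper: both separate the $x$-dependence into the coefficient $a$, then invoke continuity of $a$ and compactness of $\overline{B_{\gamma}(x)}\cap\overline{B}$ to produce a single minimizer $x^*$ of $a$ that works for all $\xi$ simultaneously. The paper's version is just more terse.
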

\begin{proof}
Using continuity of $a$ and compactness of $\overline{B_{\gamma}(x)} \cap \overline{B}$ we have
$$
\inf_{y \in \overline{B_{\gamma}(x)} \cap \overline{B}} \varphi(y,\xi) = \inf_{y \in \overline{B_{\gamma}(x)} \cap \overline{B}} \left[ |\xi|^p + a(y) \, |\xi|^q \right] = |\xi|^p + |\xi|^q \inf_{y \in \overline{B_{\gamma}(x)} \cap \overline{B}} a(y)
$$
and we choose ${y}^*$ such that $\inf_{y \in \overline{B_{\gamma}(x)} \cap \overline{B}} a(y) = a({y}^*)$.
\end{proof}

\begin{thm}[Theorem \ref{thm:main_theorem} in the special case]\label{res:approx_theorem}
Let $u \in W^{1,\varphi}_0(B) \cap L^{\infty}(B)$ with $a \in C^{\alpha}(\overline{B})$. Suppose that 
$$
q \leq p + {\alpha} \max\left( 1, \frac{p}{d}\right).
$$
Consider sequence $u^{\varepsilon}$ as in Definition \ref{def:mol_in_sp} with 
$
\varepsilon \in \left(0, \frac{1}{4} \right).
$
Then,
\begin{enumerate}[label=(E\arabic*)]
\item\label{item_mainthm1_item1} $u^{\varepsilon} \in C_c^{\infty}(B)$,
\item\label{item_mainthm1_item2} $\mathcal{G}\left(u^{\varepsilon},B\right) \to \mathcal{G}(u,B)$ as $\varepsilon \to 0$,
\item\label{item_mainthm1_item3} $u^{\varepsilon} \to u$ in $W^{1,\varphi}(B)$ as $\varepsilon \to 0$,
\item\label{item_mainthm1_item4} $C_c^{\infty}(B)$ is dense in $W^{1,\varphi}_0(B)$ and Lavrentiev phenomenon does not occur, i.e. for all boundary data $u_0 \in W^{1,q}(B)$
\begin{equation*}
    \inf_{u \in u_0 + W_0^{1,p}(B)} \mathcal{G}(u,B) = \inf_{u \in u_0 + W_0^{1,q}(B)} \mathcal{G}(u,B) = \inf_{u \in u_0 + C_c^{\infty}(B)} \mathcal{G}(u,B).
\end{equation*}
\end{enumerate}
\end{thm}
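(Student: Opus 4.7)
Since item (E1) is immediate from Lemma~\ref{lem:geometry_special_case} and (E4) is the conclusion of Lemma~\ref{lem:density_implies_lavr} once (E3) is available for bounded $u$, the whole argument reduces to (E2) and (E3). Both in turn hinge on one uniform integrability property for the family $\{\varphi(x,|\nabla u^\varepsilon(x)|)\}_\varepsilon$: combined with the classical a.e.\ convergence $\nabla u^\varepsilon \to \nabla u$ and $u^\varepsilon \to u$ in $L^1$ for mollifiers, Vitali's theorem yields (E2), and Lemma~\ref{lem:conv_mus_orlicz_spaces_Delta2}\ref{item:mus-or:4} applied to $\nabla u^\varepsilon - \nabla u$ (after a convexity plus $\Delta_2$ split that preserves uniform integrability) yields (E3).

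The first ingredient is a pointwise gradient bound. Differentiating under the integral in Definition~\ref{def:mol_in_sp}, one may write $\nabla u^\varepsilon = \frac{1}{1-2\varepsilon}(\nabla u \ast \eta_\varepsilon)$ or equivalently $\nabla u^\varepsilon = \frac{1}{1-2\varepsilon}(u \ast \nabla \eta_\varepsilon)$. Using the second form with $u \in L^\infty(B)$ gives $\|\nabla u^\varepsilon\|_\infty \leq D \varepsilon^{-1}$ when $p \leq d$, while the first form with $\nabla u \in L^p(B)$ gives $\|\nabla u^\varepsilon\|_\infty \leq D \varepsilon^{-d/p}$ when $p > d$. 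In either regime
\begin{equation*}
|\nabla u^\varepsilon(x)| \leq D\,\varepsilon^{-\min(1,d/p)},
\end{equation*}
which precisely matches the range of $\xi$ covered by Assumption~\ref{ass:continuity_abstract} at scale $\gamma \sim \varepsilon$. This is the point where the $L^\infty$ bound on $u$ is being exploited.

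Fix now $x \in B$ and small $\varepsilon$, and pick $x^* \in \overline{B_{5\varepsilon}(x)} \cap \overline{B}$ from Lemma~\ref{lem:existence_of_minimizer}, so that $\varphi(x^*,\xi) \leq \varphi(z,\xi)$ for every $z$ in that set and every $\xi$. Applying Assumption~\ref{ass:continuity_abstract} with $\gamma = 5\varepsilon$ one obtains $\varphi(x,\cdot) \leq M\varphi(x^*,\cdot)+N$ on the relevant range of $\xi$. Jensen's inequality on the convex nondecreasing function $\varphi(x^*,\cdot)$, followed by the $\Delta_2$ condition (which absorbs the factor $1/(1-2\varepsilon) \leq 2$) and the minimality of $x^*$ applied pointwise in the convolution variable (with the harmless observation that if $\tfrac{x}{1-2\varepsilon}-y$ lies outside $\overline{B}$ then $\nabla u$ there is zero by extension, so the minimality inequality holds trivially), then produces the pointwise bound
\begin{equation*}
\varphi(x,|\nabla u^\varepsilon(x)|) \leq M\,C_4 \int \eta_\varepsilon(y)\,\varphi\!\left(\tfrac{x}{1-2\varepsilon}-y,\ |\nabla u|\!\left(\tfrac{x}{1-2\varepsilon}-y\right)\right) dy + N.
\end{equation*}
The right-hand side is uniformly integrable on $B$: the convolution term converges in $L^1(B)$ by Lemma~\ref{lem:technical_convergence_of_mollification}\ref{item:conv_special_case_2}, and the constant $N$ on a bounded set is trivially so.

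The main obstacle is exactly this uniform integrability estimate, whose success depends on the match between the gradient bound $\varepsilon^{-\min(1,d/p)}$ and the admissible $\xi$-range in Assumption~\ref{ass:continuity_abstract}. That match is what permits the exponent relation $q \leq p + \alpha\max(1,p/d)$: the earlier approach of \cite{MR3918367,MR3294408}, which controls $\nabla u^\varepsilon$ only through $\nabla u \in L^p$, forces $\|\nabla u^\varepsilon\|_\infty \lesssim \varepsilon^{-d/p}$ and covers only $q \leq p+\alpha p/d$, whereas the $L^\infty$-based bound $\varepsilon^{-1}$ enlarges this to $\max(1,p/d)$ in the regime $p \leq d$, in line with the range advertised in~\eqref{eq:regime_exponents}.
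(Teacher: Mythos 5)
Your proposal is correct and follows essentially the same strategy as the paper's proof: the gradient bound $\|\nabla u^\varepsilon\|_\infty \lesssim \varepsilon^{-\min(1,d/p)}$ obtained by choosing the $L^\infty$-based or $L^p$-based Young estimate according to whether $p\le d$ or $p>d$, the freezing of the $x$-dependence via Lemma~\ref{lem:existence_of_minimizer} and Assumption~\ref{ass:continuity_abstract} at scale $\gamma=5\varepsilon$, Jensen's inequality plus minimality of $x^*$, and finally Vitali via Lemma~\ref{lem:technical_convergence_of_mollification}\ref{item:conv_special_case_2}. The only cosmetic deviation is that you absorb the factor $1/(1-2\varepsilon)\le 2$ through the $\Delta_2$ condition (constant $C_4$) rather than through the explicit $q$-growth of the model $N$-function (constant $2^q$), which is equally valid.
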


\begin{proof}
The first property follows from construction. To prove convergence, we note that 
$$
\mathcal{G}\left(u^{\varepsilon},B\right) =
\int_B \varphi(x, \left|\nabla u^{\varepsilon}\right|(x)) \diff x.
$$
We would like to take mollification out of the function $\varphi$ using its convexity and Jensen's inequality. However, this is not possible as function $\varphi$ depends also on $x$ explicitly. To overcome this problem, we apply Assumption~\ref{ass:continuity_abstract}, which allows us to approximate the function $\varphi(x,\xi)$ locally by a function depending only on $\xi$. Notice that $\psi$ saisfies Assumption~\ref{ass:continuity_abstract} thanks to Lemma~\ref{lem:example1} and the structural assumption~\eqref{eq:Nfunction_special_case}. \\

\noindent \underline{\textbf{Case 1: $p \leq d$.}} In this case we have $q \leq p + \alpha$. Using Young's convolution inequality we obtain:

\begin{equation}\label{eq:estimate_infinity}
\left\| \nabla u^{\varepsilon}  \right\|_{\infty} \leq \left\| u \right\|_{\infty} \, \left\| \nabla \eta_{\varepsilon} \right\|_{1} \leq D\, (5\varepsilon)^{-1},
\end{equation}
where we choose $D:= 5\,\left\| u \right\|_{\infty} \,\left\| \nabla \eta \right\|_{1}$. Let $x \in B$. Applying Assumption \ref{ass:continuity_abstract} with $\gamma = 5\,\varepsilon$ and Lemma \ref{lem:existence_of_minimizer} we obtain $x^* \in \overline{B_{5\varepsilon}(x)} \cap \overline{B}$ and constants $M$, $N$ such that
\begin{equation}\label{eq:estimate_for_infimum1}
\varphi(x, \left| \nabla u^{\varepsilon} \right|(x) ) \leq 
M\, \varphi(x^*, \left| \nabla u^{\varepsilon}\right| (x)  ) + N.
\end{equation}
Note that
\begin{align*}
\varphi(x^*, \left|\nabla u^{\varepsilon}(x)\right| ) &=  \varphi\left( x^*, \frac{1}{1-2\,\varepsilon} \left|\int_{B_{\varepsilon}} \nabla  u \left(\frac{x}{1-2\,\varepsilon}-y\right)  \, \eta_{\varepsilon}(y) \diff y \right|\right) \leq 
\\
&\leq \left(\frac{1}{1-2\,\varepsilon}\right)^q \, \varphi\left( x^*, \int_{B_{\varepsilon}} \left|\nabla  u\right| \left(\frac{x}{1-2\,\varepsilon}-y\right)   \, \eta_{\varepsilon}(y) \diff y\right) \\
&\leq 2^q \, \varphi\left( x^*,  \int_{B_{\varepsilon}} \left| \nabla  u\right| \left(\frac{x}{1-2\,\varepsilon}-y\right)  \, \eta_{\varepsilon}(y) \diff y\right),
\end{align*}
where we used that $\varphi$ is of the form \eqref{eq:Nfunction_special_case}. Then, Jensen's inequality implies
\begin{equation*}
\varphi\left( x^*,  \int_{B_{\varepsilon}} \left|\nabla  u\right| \left(\frac{x}{1-2\,\varepsilon}-y\right)  \, \eta_{\varepsilon}(y) \diff y\right)
\leq
\int_{B_{\varepsilon}}   \varphi\left(x^*,  \left|\nabla u\right| \left(\frac{x}{1-2\,\varepsilon}-y\right)\right) \eta_{\varepsilon}(y) \diff y.
\end{equation*}
If $\frac{x}{1-2\,\varepsilon}-y$ does not belong to $\overline{B}$ then $\varphi\left(x^*,  \left|\nabla  u\right| \left(\frac{x}{1-2\,\varepsilon}-y\right)\right)  = 0$. Otherwise, Lemma \ref{lem:geometry_special_case} implies $\frac{x}{1-2\,\varepsilon}-y \in \overline{B} \cap \overline{B_{5\,\varepsilon}(x)}$ so that
$$
\varphi\left(x^*,  \left|\nabla  u\right| \left(\frac{x}{1-2\,\varepsilon}-y\right)\right)  \leq
\varphi\left(\frac{x}{1-2\,\varepsilon}-y, \left|\nabla  u\right| \left(\frac{x}{1-2\,\varepsilon}-y\right)\right)
$$
due to the minimality of $x^*$ and nonnegativity of $a$. We conclude
\begin{equation}\label{eq:crucial_estimate_two_UNI_INT}
\varphi(x, \left|\nabla u^{\varepsilon}\right|(x) ) \leq 
2^q \, M\, \int_{B_{\varepsilon}}   \varphi\left(\frac{x}{1-2\,\varepsilon}-y,  \left|\nabla u\right| \left(\frac{x}{1-2\,\varepsilon}-y\right)\right) \eta_{\varepsilon}(y) \diff y + N.
\end{equation}
Now, we observe that $\varphi(x, \left|\nabla u^{\varepsilon}\right|(x) )$ converges a.e. to $\varphi(x, \left|\nabla u\right|(x))$. Moreover, the (RHS) of \eqref{eq:crucial_estimate_two_UNI_INT} is convergent in $L^1(B)$ cf. Lemma \ref{lem:technical_convergence_of_mollification} \ref{item:conv_special_case_2} so that $\left\{\varphi(x, \left|\nabla u^{\varepsilon}\right|(x) ) \right\}_{\varepsilon}$ is uniformly integrable in $L^1(B)$.  Therefore, Vitali convergence theorem implies
$$
\varphi(x, \left|\nabla u^{\varepsilon}\right|(x) ) \to \varphi(x, \left|\nabla u\right|(x)) \qquad \mbox{ in } L^1(B)  \mbox{ as } \varepsilon \to 0.
$$
Thanks to triangle inequality we obtain \ref{item_mainthm1_item2}. To see \ref{item_mainthm1_item3}, we note a simple estimate $|a+b|^q \leq 2^{q-1} \left(|a|^q +|b|^q\right)$ so that
$$
\varphi\left(x,\left|\nabla u(x) - \nabla u^{\varepsilon}(x)\right|\right) \leq 2^{q-1} \varphi\left(x,\left|\nabla u\right|(x) \right) + 2^{q-1}\varphi\left(x, \left|\nabla u^{\varepsilon}\right|(x)\right).
$$
It follows that the sequence $\left\{\varphi\left(x,\left|\nabla u(x) - \nabla u^{\varepsilon}(x)\right|\right)\right\}_{\varepsilon}$ is again uniformly integrable and Vitali convergence theorem yields
$$
\varphi\left(x,\left|\nabla u(x) - \nabla u^{\varepsilon}(x)\right|\right) \to 0 \qquad \mbox{ in } L^1(B) \mbox{ as } \varepsilon \to 0,
$$
concluding the proof of \ref{item_mainthm1_item3}. This shows that any bounded function in $W_0^{1,\varphi}(B)$ can be approximated with smooth compactly supported functions so that \ref{item_mainthm1_item4} follows from Lemma \ref{lem:density_implies_lavr}. \\

\noindent \underline{\textbf{Case 2: $p > d$.}} In this case we have $q \leq p + \alpha\,\frac{p}{d}$. Note that
$$
\nabla u^{\varepsilon}(x) = \frac{1}{1-2\,\varepsilon} \int_{B_{\varepsilon}} \nabla  u \left(\frac{x}{1-2\,\varepsilon}-y\right)  \, \eta_{\varepsilon}(y).
$$
Therefore, instead of \eqref{eq:estimate_infinity}, we can compute
\begin{equation}\label{eq:estimate_infinity2}
\left\| \nabla u^{\varepsilon}  \right\|_{\infty} \leq \frac{1}{1-2\,\varepsilon} \, \left\| \nabla u\left(\frac{\cdot}{1-2\varepsilon}\right)  \right\|_{p} \, \left\| \eta_{\varepsilon} \right\|_{p'} \leq
2\, \left\| \nabla u\left(\frac{\cdot}{1-2\varepsilon}\right)  \right\|_{p} \, \left\| \eta_{\varepsilon} \right\|_{p'},
\end{equation}
where $p'$ is the usual H{\"o}lder conjugate exponent. Using change of variables we obtain:
$$
 \left\| \eta_{\varepsilon} \right\|_{p'}^{p'} = 
 \int_{B_{\varepsilon}} \frac{1}{\varepsilon^{d\, p'}} \left|\eta\left(\frac{x}{\varepsilon} \right) \right|^{p'} \diff x = \varepsilon^{d\,(1-p')}  \int_{B} \left|\eta(x)\right|^{p'} \diff x = \varepsilon^{-p'\, \frac{d}{p}}  \| \eta \|_{p'}^{p'},
$$
so that $ \left\| \eta_{\varepsilon} \right\|_{p'} = \varepsilon^{-\frac{d}{p}}  \| \eta \|_{p'}$. Using change of variables again,
$$
\left\| \nabla u\left(\frac{\cdot}{1-2\varepsilon}\right)  \right\|_{p} \leq \left\| \nabla u  \right\|_{p}
$$
which is finite as $\mathcal{G}(u,B)<\infty$. Therefore, \eqref{eq:estimate_infinity2} boils down to
$$
\left\| \nabla u^{\varepsilon}  \right\|_{\infty} \leq D\, (5\varepsilon)^{-\frac{d}{p}},
$$
where $D:= 5^{\frac{d}{p}}\,\left\| \nabla u  \right\|_{p} \, \| \eta \|_{p'}$. Using Assumption \ref{ass:continuity_abstract} we obtain estimate \eqref{eq:estimate_for_infimum1}. The rest of the proof is exactly the same.
\end{proof}

\section{Proof of Theorem \ref{thm:main_theorem} in the general case}\label{sect:main_result_general}
\noindent In this section we generalize construction from Section \ref{sect:special_case} to prove Theorem \ref{thm:main_theorem} in the general case. 

\subsection{Second convex conjugate function}
For general $N$-function $\psi$ satisfying Assumption \ref{ass:Nfunction}, Lemma \ref{lem:existence_of_minimizer} is not necessarily true. Therefore, to control mollifications, we need a different method to approximate $\psi(x,\xi)$ with a function depending only on $\xi$. The construction below is somehow standard and has appeared in many works before, see \cite{chlebicka2018well,chlebicka2019parabolic}.\\

\noindent We start more generally. Let $f: \R \to \R$. We define convex conjugate $f^*:\R \to \R \cup \{+\infty\}$ of $f$ as
$$
f^*(\eta) = \sup_{\xi \in \R} \left(\xi \cdot \eta - f(\xi)\right).
$$
Moreover, the second convex conjugate of $f^{**}$ is defined as
$$
f^{**}(\xi) = \sup_{\eta \in \R} \left(\xi \cdot \eta - f^*(\eta)\right).
$$
We now list some basic properties of the convex conjugates cf. \cite[Propositions 2.21, 2.28]{MR3821514}.
\begin{lem}\label{lem:conv_conj_prop}
Let $f, g: \R \to \R$. Then, the following holds true:
\begin{enumerate}[label=(F\arabic*)]
    \item\label{lem:conv_conj_prop1} $f^*$ and $f^{**}$ are convex functions,
    \item\label{lem:conv_conj_prop2} if $f \leq g$ on $I$, then $g^* \leq f^*$ on $\R$, 
    \item\label{lem:conv_conj_prop3} if $f \leq g$ on $I$, then $f^{**} \leq g^{**}$ on $\R$,
    \item\label{lem:conv_conj_prop4} if $f$ is convex then $f^{**} = f$ on $\R$.
    \item\label{lem:conv_conj_prop5} $f^{**}$ is the gratest convex minorant of $f$.
\end{enumerate}
\end{lem}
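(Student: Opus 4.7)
The plan is to treat this as standard convex analysis facts and simply indicate the arguments. For (F1), I would observe that by definition
$$
f^*(\eta) = \sup_{\xi \in \R}\bigl(\xi \eta - f(\xi)\bigr)
$$
is the pointwise supremum of the family of affine functions $\eta \mapsto \xi\eta - f(\xi)$ indexed by $\xi \in \R$; since a supremum of convex (indeed affine) functions is convex, $f^*$ is convex. Applying the same observation to $f^*$ in place of $f$ gives convexity of $f^{**}$.

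For (F2) and (F3), under the hypothesis $f \leq g$ (read on $\R$, since the conjugation is performed there), I would note that for every $\eta \in \R$ and every $\xi \in \R$ one has $\xi\eta - g(\xi) \leq \xi\eta - f(\xi)$. Taking the supremum over $\xi$ yields $g^*(\eta) \leq f^*(\eta)$, which is (F2). Applying (F2) once more, this time to the pair $(g^*, f^*)$ with $g^* \leq f^*$, one obtains $(f^*)^* \leq (g^*)^*$, i.e.\ $f^{**} \leq g^{**}$, which is (F3).

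The main content is (F4), i.e.\ the Fenchel--Moreau identity $f^{**}=f$ for convex $f$ (interpreted with the usual lower-semicontinuity caveats covered by the reference). The easy direction $f^{**} \leq f$ follows directly from the definition: for every $\xi, \eta$ one has $f^*(\eta) \geq \xi\eta - f(\xi)$, hence $\xi\eta - f^*(\eta) \leq f(\xi)$, and supremizing in $\eta$ gives $f^{**}(\xi) \leq f(\xi)$. For the converse, I would invoke the Hahn--Banach separation theorem: for any $\xi_0$ and any $t < f(\xi_0)$, the point $(\xi_0,t)$ lies strictly below the epigraph of $f$, which is convex and closed; hence one can separate it by an affine functional $\xi \mapsto \eta\xi - c$ satisfying $\eta\xi - c \leq f(\xi)$ for all $\xi$ and $\eta\xi_0 - c > t$. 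The first inequality gives $f^*(\eta) \leq c$, the second then yields $f^{**}(\xi_0) \geq \eta\xi_0 - f^*(\eta) \geq \eta\xi_0 - c > t$, and letting $t \uparrow f(\xi_0)$ finishes the argument.

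Finally, (F5) is a formal consequence of the previous parts. The function $f^{**}$ is convex by (F1) and satisfies $f^{**} \leq f$ by the easy half of (F4) established above, so it is a convex minorant of $f$. Maximality is obtained as follows: if $h$ is any convex function with $h \leq f$ on $\R$, then (F3) gives $h^{**} \leq f^{**}$, while (F4) applied to the convex $h$ gives $h = h^{**}$, whence $h \leq f^{**}$. I expect no real obstacle here since all five items are textbook convex analysis; the only delicate point, namely the rigorous statement of the Fenchel--Moreau step in (F4), is handled by the reference to \cite[Propositions 2.21, 2.28]{MR3821514}, and in the applications that follow the functions $\psi(x,\cdot)$ are already convex and continuous so that (F4) is applied in its cleanest form.
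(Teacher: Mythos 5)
The paper states Lemma~\ref{lem:conv_conj_prop} without proof, referring to \cite[Propositions 2.21, 2.28]{MR3821514}; your proof supplies the standard convex-analysis arguments that the reference contains, and it is correct. Two small remarks worth keeping in mind: you correctly read the hypothesis ``$f \leq g$ on $I$'' as ``on $\R$'' --- if $I$ were a proper subset the implication in (F2)--(F3) would fail, since the supremum defining $f^*$ ranges over all of $\R$ --- and the lower-semicontinuity caveat you raise for (F4) is actually moot in the setting of the lemma, because a finite-valued convex function on $\R$ is automatically continuous, hence closed, so the Fenchel--Moreau identity holds without qualification.
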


\noindent Now, we apply these notions to $N$-functions. Given $N$-function $\psi(x,\xi)$ satisfying Assumption~\ref{ass:Nfunction}, we extend it by $0$ for $\xi < 0$ (hence this extension is surely convex), we consider a ball $B_{\gamma}(x)$ such that $\overline{B_{\gamma}(x)} \cap \overline{\Omega}$ is nonempty and we define
\begin{equation}\label{eq:def_2nd_convex_conj}
\psi_{x,\,\gamma}(\xi): \R \to \R, \qquad \qquad \psi_{x,\,\gamma}(\xi) :=  \inf_{y \in \overline{B_{\gamma}(x)} \cap \overline{\Omega}} \psi(y,\xi).
\end{equation}

\begin{lem}\label{lem:conjugates_estimate_from_both_sides}
Let $\psi$ be as in Assumptions \ref{ass:Nfunction}, \ref{ass:continuity_abstract} and $\psi_{x, \, \gamma}$ be as in \eqref{eq:def_2nd_convex_conj}. 
\begin{enumerate}[label=(G\arabic*)]
    \item\label{convex_conj_estimate_item1} Let $\mathcal{D}>1$. Then, there are constants $\mathcal{M} = \mathcal{M}(p,q,\mathcal{D})$, $\mathcal{N} = \mathcal{N}(p,q,\mathcal{D})$ such that
\begin{equation}\label{eq:estimate_with_2nd_conj}
\psi(y,\xi) \leq \mathcal{M} \, \psi_{x,\, \gamma}^{**}(\xi) + \mathcal{N}
\end{equation}
for all balls $B_{\gamma}(x)$, all $y \in \overline{B_{\gamma}(x)} \cap \overline{\Omega}$, all $\xi$ such that $\xi \leq \mathcal{D}\, \gamma^{-\min\left(1,\, \frac{d}{p}\right)}$ and all $\gamma \in \left(0,\frac{1}{2}\right)$.
    \item\label{convex_conj_estimate_item2} It holds $0 \leq \psi_{x,\, \gamma}^{**}(\xi) \leq \psi(y, \xi)$ for all balls $B_{\gamma}(x)$, all $y \in \overline{B_{\gamma}(x)} \cap \overline{\Omega}$ and all $\xi \in \R$.
\end{enumerate}
\end{lem}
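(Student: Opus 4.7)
Part \ref{convex_conj_estimate_item2} is essentially a direct consequence of properties of second convex conjugates. By definition \eqref{eq:def_2nd_convex_conj}, $\psi_{x,\gamma}(\xi) \leq \psi(y,\xi)$ for every $y \in \overline{B_\gamma(x)}\cap \overline{\Omega}$ and every $\xi \in \mathbb{R}$ (with both functions extended by $0$ for $\xi<0$). Since $\psi(y,\cdot)$ is convex by \ref{ass:Nf_conv}, Lemma~\ref{lem:conv_conj_prop}\ref{lem:conv_conj_prop3}--\ref{lem:conv_conj_prop4} imply $\psi_{x,\gamma}^{**} \leq \psi(y,\cdot)^{**} = \psi(y,\cdot)$. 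Moreover, $\psi_{x,\gamma} \geq 0$ so that the constant function $0$ is a convex minorant, hence by Lemma~\ref{lem:conv_conj_prop}\ref{lem:conv_conj_prop5} we also have $\psi_{x,\gamma}^{**} \geq 0$.

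For part \ref{convex_conj_estimate_item1}, write $A := \mathcal{D}\,\gamma^{-\min(1,d/p)}$. The strategy is to first convert Assumption~\ref{ass:continuity_abstract} to an inequality involving $\psi_{x,\gamma}$ and then pass to the convex envelope by means of a quasi-convexity argument. Applying Assumption~\ref{ass:continuity_abstract} with $D=\mathcal{D}$ and taking the infimum over $z\in \overline{B_\gamma(x)}\cap\overline{\Omega}$ on the right-hand side yields, for every $y\in \overline{B_\gamma(x)}\cap\overline{\Omega}$ and every $\xi\in[0,A]$,
\begin{equation*}
\psi(y,\xi) \;\leq\; M\,\psi_{x,\gamma}(\xi) + N,
\end{equation*}
with $M=M(p,q,\mathcal{D})$, $N=N(p,q,\mathcal{D})$. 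Exploiting convexity of $\psi(y,\cdot)$ from \ref{ass:Nf_conv} and applying this inequality at the endpoints, for any $\xi_1,\xi_2\in[0,A]$ and $\lambda\in[0,1]$ with $\xi=\lambda\xi_1+(1-\lambda)\xi_2\in[0,A]$ one obtains
\begin{equation*}
\psi_{x,\gamma}(\xi) \;\leq\; \psi(y,\xi) \;\leq\; \lambda\,\psi(y,\xi_1) + (1-\lambda)\,\psi(y,\xi_2) \;\leq\; M\bigl(\lambda\,\psi_{x,\gamma}(\xi_1)+(1-\lambda)\,\psi_{x,\gamma}(\xi_2)\bigr) + N.
\end{equation*}
This is a local quasi-convexity of $\psi_{x,\gamma}$. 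Combining it with the one-dimensional Carath\'eodory characterization
\begin{equation*}
\psi_{x,\gamma}^{**}(\xi)=\inf\bigl\{\lambda\,\psi_{x,\gamma}(\xi_1)+(1-\lambda)\,\psi_{x,\gamma}(\xi_2)\;:\;\lambda\xi_1+(1-\lambda)\xi_2=\xi,\;\xi_i\geq 0,\;\lambda\in[0,1]\bigr\}
\end{equation*}
produces $\psi_{x,\gamma}(\xi)\leq M\,\psi_{x,\gamma}^{**}(\xi)+N$, and substituting back into the first inequality of this paragraph gives \eqref{eq:estimate_with_2nd_conj} with $\mathcal{M}=M^{2}$ and $\mathcal{N}=MN+N$.

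The step that requires care, and the main obstacle, is that the infimum in the Carath\'eodory formula is a priori over all pairs $\xi_1,\xi_2\in[0,\infty)$, whereas the quasi-convexity inequality above was derived only when both $\xi_1,\xi_2\in[0,A]$. To handle this, I would use the superlinear lower bound $\psi_{x,\gamma}(\xi_2)\geq C_1\xi_2^{p}$ for $\xi_2\geq \xi_0$ coming from \ref{ass:Nf_pq}: for $\xi\in[0,A]$ fixed, any representation $\xi=\lambda\xi_1+(1-\lambda)\xi_2$ with $\xi_2\to\infty$ forces $(1-\lambda)\psi_{x,\gamma}(\xi_2)\geq C_1 A\,\xi_2^{p-1}\to\infty$ by the constraint $(1-\lambda)\xi_2\leq \xi\leq A$, so only pairs with $\xi_2\leq \kappa A$ (for some $\kappa=\kappa(p,q)$ determined by the magnitude of $\psi_{x,\gamma}(\xi)$, which is itself controlled by the $q$-growth) can be near-optimal. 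One then reruns the argument with $\mathcal{D}$ replaced by $\kappa\mathcal{D}$; the constants $M,N$ are simply enlarged accordingly (and still depend only on $p,q,\mathcal{D}$, absorbing $\kappa$), and the proof concludes as above. The small additional case $\xi\leq \xi_0$ is trivially absorbed in $\mathcal{N}$ using the upper bound in \ref{ass:Nf_pq}.
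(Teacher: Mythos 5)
Your proof of \ref{convex_conj_estimate_item2} is correct and coincides with the paper's. For \ref{convex_conj_estimate_item1}, the framework (apply Assumption~\ref{ass:continuity_abstract} to pass to the infimum function $\psi_{x,\gamma}$, then convexify via a Carath\'eodory representation of $\psi_{x,\gamma}^{**}$) is sound and in the same spirit as the paper's Steps~3--4, and you correctly identify the obstacle: the local quasi-convexity inequality only holds when both endpoints $\xi_1,\xi_2$ lie in $[0,A]$. The problem is that your patch for this obstacle does not work.

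The claimed lower bound $(1-\lambda)\psi_{x,\gamma}(\xi_2)\geq C_1A\,\xi_2^{p-1}$ does not follow from $(1-\lambda)\xi_2\leq \xi\leq A$. That constraint gives $(1-\lambda)\leq A/\xi_2$, an \emph{upper} bound on the weight, so $(1-\lambda)\psi_{x,\gamma}(\xi_2)\geq (1-\lambda)C_1\xi_2^{p}$ gives no control from below: nothing prevents $(1-\lambda)$ from being much smaller than $A/\xi_2$. And the underlying claim (near-optimal pairs satisfy $\xi_2\leq\kappa A$) is false in general. If the convex envelope $\psi_{x,\gamma}^{**}$ has a linear (affine) piece $[a,b]$ containing $\xi$, then the pair $(\xi_1,\xi_2)=(a,b)$ is \emph{exactly} optimal and $b$ can a priori be arbitrarily large; nothing in Assumption~\ref{ass:continuity_abstract}, which gives no control at all for $\xi>A$, rules this out. (Even for pairs with $\xi_1$ close to $\xi$ and $\xi_2\to\infty$, the combination $\lambda\psi_{x,\gamma}(\xi_1)+(1-\lambda)\psi_{x,\gamma}(\xi_2)$ can remain bounded, so the superlinearity of $\psi_{x,\gamma}$ does not force $\xi_2$ to be bounded.)

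The fix, which is what the paper does, is that you never need to apply Assumption~\ref{ass:continuity_abstract} at the large endpoint $\xi_2$. In the linear-piece case take $z$ a near-minimizer for $\psi(\cdot,\xi_2)$; then
$\psi_{x,\gamma}(\xi)\leq\psi(z,\xi)\leq \lambda\psi(z,\xi_1)+(1-\lambda)\psi(z,\xi_2)$,
and the term $\psi(z,\xi_2)\approx\psi_{x,\gamma}(\xi_2)=\psi_{x,\gamma}^{**}(\xi_2)$ is already what you want, with no comparison needed. Only $\psi(z,\xi_1)$ must be replaced by $\psi_{x,\gamma}(\xi_1)$, and $\xi_1\leq\xi\leq A$ so Assumption~\ref{ass:continuity_abstract} applies. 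This is precisely the paper's Step~4 with the sequences $\{x_n^a\},\{x_n^b\}$; the distinction between supporting lines touching the envelope at a single point (Step~3, where $\psi_{x,\gamma}^{**}(\xi)=\psi_{x,\gamma}(\xi)$ and no convexification is needed) and at an interval (Step~4) is essential to making the constants uniform.
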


\noindent One could try to prove Lemma \ref{lem:conjugates_estimate_from_both_sides} by applying property \ref{lem:conv_conj_prop3} from Lemma \ref{lem:conv_conj_prop} to the estimate appearing in Assumption \ref{ass:continuity_abstract}. However, this estimate is valid only on some bounded interval rather than the whole real line. The correct argument is presented in \cite{chlebicka2019book} but since it contains some imperfections, we present it below.

\begin{proof}[Proof of Lemma \ref{lem:conjugates_estimate_from_both_sides}]
The proof of \ref{convex_conj_estimate_item2} follows easily from \ref{lem:conv_conj_prop3} and \ref{lem:conv_conj_prop4} stated in Lemma~\ref{lem:conv_conj_prop}. For~\ref{convex_conj_estimate_item1} we split the proof into several steps. Recall that a convex function has a supporting line so that for all $\eta \in \R$, there exists supporting line $h_{\eta}$ such that $\psi_{x,\gamma}^{**}(\xi) \geq h_{\eta}(\xi)$ and $\psi_{x,\gamma}^{**}(\eta) = h_{\eta}(\eta)$.\\

\noindent \underline{Step 1.} The map $\R \ni \xi \mapsto \psi_{x,\,\gamma}(\xi)$ is locally Lipschitz continuous.

\begin{proof} Fix $y \in B_{\gamma}(x)$ and interval $[-R,R] \subset \R$. The map $\R \ni \xi \mapsto \psi(y,\xi)$ is convex so its difference quotients are monotone. Hence, for $\xi_1, \xi_2 \in [-R,R]$ with $\xi_1 < \xi_2$ we have
$$
\frac{\psi(y,\xi_2) - \psi(y,-R - 1)}{\xi_2 -(- R - 1)} \leq \frac{\psi(y,\xi_2) - \psi(y,\xi_1)}{\xi_2 - \xi_1} \leq \frac{\psi(y,R+1) - \psi(y,\xi_1)}{R+1 - \xi_1}
$$
Since $|\psi(y,R + 1)| \leq C_2\,(1+(R+1))^q$ cf. Assumption \ref{ass:Nfunction} \ref{ass:Nf_pq}, the map $y \mapsto \psi(y,\xi)$ is Lipschitz continuous with constant $2\,C_2\,(1+(R+1))^q$. As this holds uniformly for all $y \in B_{\gamma}(x)$, the conclusion follows.
\end{proof}

\noindent \underline{Step 2.} For $\xi \leq 0$ we have $\psi_{x,\,\gamma}^{**}(\xi) = 0$ and for $\xi > 0$ we have $\psi_{x,\,\gamma}^{**}(\xi) > 0$. In particular, estimate \eqref{eq:estimate_with_2nd_conj} is satisfied for $\xi \leq 0$. 

\begin{proof}
Consider extension of function $m_{\psi}(\xi)$  by 0 for $\xi \leq 0$. Then it follows from from~\ref{ass:lowerbound} in  Assumption~\ref{ass:Nfunction} that  
\begin{equation}\label{eq:lower_bound_2nd_conj_part_case}
m_{\psi}(\xi) \leq \psi_{x,\,\gamma}^{**}(\xi).
\end{equation}
This proves that $\psi_{x,\,\gamma}^{**}(\xi) \geq 0$ and for $\xi > 0$ we have $\psi_{x,\,\gamma}^{**}(\xi) > 0$. Finally, as $\psi_{x,\,\gamma}^{**}(\xi) \leq \psi_{x,\,\gamma}(\xi)$, we deduce $\psi_{x,\,\gamma}^{**}(\xi) = 0$ for $\xi \leq 0$.
\end{proof}

\noindent \underline{Step 3.} Fix $\eta$ such that $0 \leq \eta \leq \mathcal{D}\, \gamma^{-\min\left(1,\, \frac{d}{p}\right)}$ and assume that $h_{\eta}(\xi) = \psi_{x,\,\gamma}^{**}(\xi)$ only for $\xi = \eta$. Then, $\psi_{x,\,\gamma}^{**}(\eta) = \psi_{x,\,\gamma}(\eta)$ and estimate \eqref{eq:estimate_with_2nd_conj} is satisfied for $\xi = \eta$. 

\begin{proof}
Suppose that $\psi_{x,\,\gamma}^{**}(\eta) < \psi_{x,\,\gamma}(\eta)$ (we always have $\psi_{x,\,\gamma}^{**}(\eta) \leq \psi_{x,\,\gamma}(\eta)$!). Using Lipschitz continuity from Step 1, we find two lines such that $\psi_{x,\,\gamma}$ is above them (see dotted lines in Fig. \ref{fig:convexity}). Hence, we observe that $\psi_{x,\,\gamma}^{**}$ is not the largest convex minorant of $\psi_{x,\,\gamma}$, see Fig. \ref{fig:convexity}. It follows that $\psi_{x,\,\gamma}^{**}(\eta) = \psi_{x,\,\gamma}(\eta)$ and estimate \eqref{eq:estimate_with_2nd_conj} follows directly from Assumption \ref{ass:continuity_abstract}.
\end{proof}

\noindent \underline{Step 4.} Fix $\eta$ such that $0 \leq \eta \leq \mathcal{D}\, \gamma^{-\min\left(1,\, \frac{d}{p}\right)}$ and assume that $h_{\eta}(\xi) = \psi_{x,\,\gamma}^{**}(\xi)$ for some interval $[a, b]$ containing $\eta$ (so that $h_{\eta}$ and $\psi_{x,\,\gamma}^{**}$ have joint line interval). Then, estimate \eqref{eq:estimate_with_2nd_conj} is satisfied for $\xi = \eta$.

\begin{proof}
First, from Step 2 we may assume that $a \geq 0$ and from \eqref{eq:lower_bound_2nd_conj_part_case} we deduce $b < \infty$ (as function $m_{\psi}$ is superlinear). Second, the reasoning from Step 3 shows that
$$
\psi_{x,\,\gamma}(a) = \psi_{x,\,\gamma}^{**}(a), \qquad \qquad
\psi_{x,\,\gamma}(b) = \psi_{x,\,\gamma}^{**}(b).
$$
Moreover, by the assumption, there exists $t \in [0,1]$ such that
$$
\psi_{x,\,\gamma}^{**}(\eta) = t\, \psi_{x,\,\gamma}^{**}(a) + (1-t)\, \psi_{x,\,\gamma}^{**}(b) = t\, \psi_{x,\,\gamma}(a) + (1-t)\, \psi_{x,\,\gamma}(b).
$$
By definition of $\psi_{x,\,\gamma}$, there exist sequences $\{x^a_n\}_{n \in \N}$, $\{x^b_n\}_{n \in \N} \subset B_\gamma(x)$ such that
\begin{equation}\label{eq:estimate_with_points_1/n}
\psi_{x,\,\gamma}^{**}(\eta) \geq t\, \psi(x^a_n,\,a) + (1-t)\, \psi(x^b_n,\,b) - \frac{1}{n}.
\end{equation}
With these at hand, we proceed to the final proof. By definition and convexity,
\begin{equation}\label{eq:convexity_psi_infimum}
\psi_{x, \gamma}(\eta) \leq 
\psi(x^b_n,\eta) \leq t\, \psi(x^b_n, a) + (1-t)\,\psi(x^b_n, b) 
\end{equation}
To apply \eqref{eq:estimate_with_points_1/n}, we have to replace $\psi(x^b_n, a)$ with $\psi(x^a_n, a)$. This can be done with Assumption \ref{ass:continuity_abstract}: we note that $|x_n^a - x^b_n| \leq 2\,\gamma$ so if we let $D := 2^{\min\left(1,\frac{d}{p}\right)} \, \mathcal{D}$ we have
$$
|\eta| \leq \mathcal{D}\, \gamma^{-\min\left(1,\, \frac{d}{p}\right)} = D \, \left(2\,\gamma\right)^{-\min\left(1,\, \frac{d}{p}\right)} 
$$
and Assumption \ref{ass:continuity_abstract} implies existence of constants $M(D), N(D)$ (we skip dependence of these constants on $p$ and $q$ as these exponents are fixed) such that
$$
\psi(x^b_n,\eta) \leq M(D)\, \psi(x^a_n, a) + N(D).
$$
It follows from \eqref{eq:convexity_psi_infimum} that
$$
\psi_{x, \gamma}(\eta) \leq t\, \left(M(D)\, \psi(x^a_n, a) + N(D) \right) + (1-t)\, \psi(x^b_n, b) 
$$
Letting $\widetilde{M}(D) := \max(M(D),1)$ and exploiting \eqref{eq:estimate_with_points_1/n} we have
$$
\psi_{x, \gamma}(\eta) \leq \widetilde{M}(D) \left( t\, \psi(x^a_n,\,a) + (1-t)\, \psi(x^b_n,\,b) \right)  + N(D) \leq \widetilde{M}(D) \, \psi^{**}_{x,\,\gamma}(\xi_0) + \frac{\widetilde{M}(D)}{n} + N(D).
$$
Sending $n \to \infty$ we deduce
$$
\psi_{x, \gamma}(\eta) \leq \widetilde{M}(D) \, \psi^{**}_{x,\,\gamma}(\eta) + N(D).
$$
Exploiting Assumption \ref{ass:continuity_abstract} once again, we obtain for all $y \in B_{\gamma}(x)$
$$
\psi(y,\eta) \leq M(\mathcal{D})\, \psi_{x, \gamma}(\eta) + N(\mathcal{D}) \leq M(\mathcal{D})\,\widetilde{M}({D}) \, \psi^{**}_{x,\,\gamma}(\eta) +  N(\mathcal{D}) + N({D}).
$$
The conclusion follows with $\mathcal{M}:= M(\mathcal{D})\,\widetilde{M}({D})$ and $\mathcal{N} = N(\mathcal{D}) + N({D})$.
\end{proof}

\noindent \underline{Step 5.} Cases considered in Steps 2-4 are the only possible ones.

\begin{proof}
Clearly, the tangent line $h_{\eta}$ touches the epigraph of $\psi_{x,\gamma}^{**}$ at least in one point. The case where it is touched exactly at one point was studied in Step 3 while the situation when it is touched along some interval $[a,b]$ was analyzed in Step 4. Now, suppose that there are $\eta < \eta_1 < \eta_2$ such that
$$
\psi_{x,\gamma}^{**}(\eta) = h_{\eta}(\eta), \qquad \psi_{x,\gamma}^{**}(\eta_2) = h_{\eta}(\eta_2), \qquad \psi_{x,\gamma}^{**}(\eta_1) > h_{\eta}(\eta_1).
$$
Then, $\psi_{x,\gamma}^{**}$ is not convex raising contradiction.
\end{proof}

\end{proof}


\begin{figure}
\begin{tikzpicture}[scale = 3]

\coordinate (0) at (0, 0);
\coordinate (l) at (-1,-0.5);
\coordinate (r) at (1, 0.5);

\draw (l) -- (r);
\draw[fill] (0,0) circle [radius = 0.03];
\node [below, black] at (0) {$\eta$};
\node [right, black] at (r) {$h_{\eta}(\xi)$};

\draw[black, very thick] plot [smooth] coordinates {(-1,0.7) (0) (1,1.2)};
\node [below, black] at (0.99,0.94) {$\psi_{x,\varepsilon}^{**}(\xi)$};

\draw[lightgray, very thick] plot [smooth] coordinates {(-1,0.7) (-0.3, 0.13) (0, 0.7) (0.3, 0.28) (1,1.2)};
\node [gray!70!black, left] at (0.88,1) {$\psi_{x,\varepsilon}(\xi)$};

\draw[dotted, thick] (0, 0.7) -- (0.22,0.17);
\draw[dotted, thick] (0, 0.7) -- (-0.19,0.03);
\draw[dashed, thick] (0.22,0.17) -- (-0.19,0.02);


\end{tikzpicture}

\caption{We assume that there is $\eta > 0$ such that functions $\psi_{x,\,\gamma}^{**}$ (black line) and $\psi_{x,\,\gamma}$ (grey line) satisfy $\psi_{x,\,\gamma}^{**}(\eta) < \psi_{x,\,\gamma}(\eta)$ and tangent line $h_{\eta}$ touches $\psi_{x,\,\gamma}^{**}$ only at $\eta$. As $\psi_{x,\,\gamma}$ is Lipschitz continuous, we can estimate it from below (dotted lines). Then, the function obtained by combining $\psi_{x,\,\gamma}^{**}$ and the dashed line is convex. It lies below $\psi_{x,\,\gamma}$ and above $\psi_{x,\,\gamma}^{**}$ raising contradiction with Lemma \ref{lem:conv_conj_prop} \ref{lem:conv_conj_prop5}.}
\centering
\label{fig:convexity}
\end{figure}
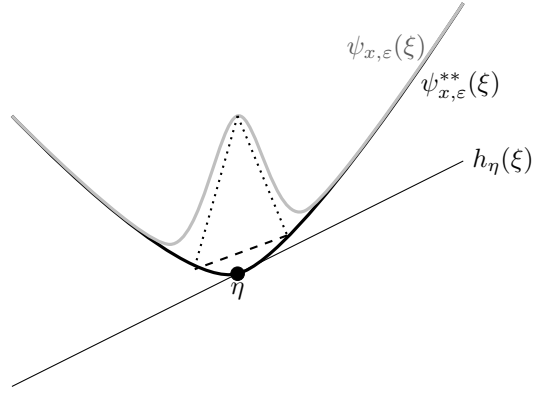

\subsection{Geometric issues}
As $\Omega$ is not a ball in general, we cannot define compactly supported approximation by retracting the function to the interior part of $\Omega$ as in Definition \ref{def:mol_in_sp}. However, one can still do that for star-shaped domains.

\begin{Def}
(1) A bounded domain $U \subset \R^d$ is said to be star-shaped with respect to $\overline{x}$ if every ray starting from $\overline{x}$ intersects with $\partial U$ at one and only one point. \\
(2) A bounded domain $U \subset \R^d$ is said to be star-shaped with respect to the ball $B_{\gamma}(x_0)$ if $U$ is star-shaped with respect to all $y \in B_{\gamma}(x_0)$.
\end{Def}

\noindent The following lemma shows that star-shaped domains can be uniformly shrinked which allows for defining compactly supported approximations.

\begin{lem}\label{lem:uniform_squeezing_star}
Let $U \subset \R^d$ be a star-shaped domain with respect to the ball $B_R$. Let $\kappa_{\varepsilon} = 1- \frac{4\,\varepsilon}{R}$. Then, $\dist(\kappa_{\varepsilon} \, U, \partial U) \geq 2\,\varepsilon$. In particular,
$$
\overline{\kappa_{\varepsilon} \, U + \varepsilon \, B } \subset U.
$$
More generally, if $U$ is star-shaped with respect to the ball $B_R(x_0)$,
$$
\overline{\kappa_{\varepsilon} \, (U-x_0) + \varepsilon \, B } \subset (U-x_0).
$$
\end{lem}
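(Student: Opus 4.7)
The plan is a direct convex combination argument using star-shapedness. For the general statement involving $B_R(x_0)$, I reduce to $x_0 = 0$ by translating: $U' := U - x_0$ is star-shaped with respect to $B_R = B_R(0)$, and proving the claim for $U'$ yields the general statement verbatim. So I may assume throughout that $U$ is star-shaped with respect to $B_R$.

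Fix any $x \in \overline{U}$ and any $h \in \R^d$ with $|h| \leq 2\varepsilon$. The definition $\kappa_{\varepsilon} = 1 - 4\varepsilon/R$ is engineered precisely so that $1 - \kappa_{\varepsilon} = 4\varepsilon/R$; hence the point $y := h/(1-\kappa_{\varepsilon})$ satisfies $|y| \leq R/2$, and in particular $y \in B_R \subset U$. The identity
$$
\kappa_{\varepsilon}\, x + h \;=\; \kappa_{\varepsilon}\, x \;+\; (1 - \kappa_{\varepsilon})\, y
$$
then exhibits $\kappa_{\varepsilon}x + h$ as a convex combination of $x$ and $y$, lying strictly between them on the segment from $y$ to $x$ (since $\kappa_{\varepsilon} \in (0,1)$).

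Now I invoke star-shapedness of $U$ with respect to $y \in B_R$: the ray emanating from $y$ through $x$ meets $\partial U$ at a unique point $p$. Since $y$ is an interior point of $U$, the ray stays inside $U$ until it first hits $\partial U$, because any re-entry would create a second boundary crossing, contradicting uniqueness of $p$. Since $x \in \overline{U}$, the parameter at which $x$ sits on the ray is at most the parameter of $p$, and $\kappa_{\varepsilon} x + h$ is strictly closer to $y$ than $x$ itself (weight $\kappa_{\varepsilon}<1$ on $x$), so it is strictly before $p$ along the ray and therefore lies in the open set $U$. Specialising to $x \in U$ with $|h| < 2\varepsilon$ gives $B_{2\varepsilon}(\kappa_{\varepsilon}x) \subset U$, which is the assertion $\dist(\kappa_{\varepsilon} U, \partial U) \geq 2\varepsilon$; allowing instead $x \in \overline{U}$ with $|h| \leq \varepsilon$ covers every point of $\overline{\kappa_{\varepsilon} U + \varepsilon B}$ (by writing it as $\kappa_\varepsilon x + h$ via boundedness of $U$ and compactness of $\overline B$) and yields the desired compact inclusion $\overline{\kappa_{\varepsilon} U + \varepsilon B} \subset U$.

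The main subtlety is justifying that the segment from the interior point $y$ to the point $x \in \overline U$ lies entirely in the \emph{open} set $U$; this is exactly where the ``one and only one intersection with $\partial U$'' formulation of star-shapedness from the paper is used, as it rules out the segment grazing $\partial U$ before reaching $x$. Everything else reduces to a single linear interpolation, and the constant $4$ in $\kappa_{\varepsilon} = 1 - 4\varepsilon/R$ is chosen precisely so that $|h| \leq 2\varepsilon$ forces $|y| \leq R/2$, leaving a definite safety margin inside $B_R$.
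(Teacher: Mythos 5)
Your proof is correct, and it takes a genuinely different route from the paper. The paper's argument is geometric: it picks a boundary point $b\in\partial U$ and the corresponding point $c=\kappa_\varepsilon b\in\partial(\kappa_\varepsilon U)$ on the segment $[0,b]$, inscribes a cone with apex $b$ and base a sphere of radius $R$ inside $B_R$, and then runs a trigonometric computation ($\tan\alpha = R/|b|$, $\sin\alpha\geq R/(2|b|)$, etc.) to lower-bound $\dist(c,\partial U)$ by $2\varepsilon$, finishing by observing that the argument repeats for every $c\in\partial(\kappa_\varepsilon U)$. Your argument instead rewrites $\kappa_\varepsilon x + h$ as the convex combination $\kappa_\varepsilon x + (1-\kappa_\varepsilon)y$ with $y = h/(1-\kappa_\varepsilon)$, observes that the choice $\kappa_\varepsilon = 1-4\varepsilon/R$ forces $|y|\leq R/2$ whenever $|h|\leq 2\varepsilon$, and then applies star-shapedness with respect to $y$ directly. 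This bypasses the cone construction and the trigonometry entirely, handles all of $\kappa_\varepsilon U$ (not just boundary points) in one stroke, and at the same time gives the compact inclusion with $|h|\leq\varepsilon$ and $x\in\overline U$. The only place where care is needed --- and you flag it correctly --- is the step where the paper's ``one and only one intersection'' definition of star-shapedness is converted into the statement that $[y,p)\subset U$ for the unique boundary point $p$ on a ray; your justification (a second boundary crossing would contradict uniqueness, and $x\in\overline U$ cannot lie strictly beyond $p$) is sound. Both proofs implicitly assume $B_R\subset U$ (the paper uses $|b|\geq R$, you use $B_{R/2}\subset U$), which is the intended reading of ``star-shaped with respect to the ball $B_R$.''
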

\begin{proof}
Let $b \in \partial U$ and let $c \in \partial (\kappa_{\varepsilon} U)$ such that $c$ lies on the interval $[0,b]$. Let $T \subset B_R$ be a sphere of radius $R$ perpendicular to the interval $[0,b]$ and let $S$ be the cone with base $T$ and apex $b$, see Figure \ref{fig:starshaed}. First, we have
$$
 \dist(\partial \Omega, c) \geq \dist(\partial S, c).
$$
Let $\alpha$ be a half of an apex angle of the cone $C$, see Figure \ref{fig:starshaed}. It follows that
$$
\dist(\partial S, c) = \sin(\alpha) \, |b-c| = \sin(\alpha) \, (1-\kappa_{\varepsilon}) \, |b| =  \sin(\alpha) \, (1-\kappa_{\varepsilon}) \, |b| \geq \sin(\alpha) \, \frac{4\,\varepsilon}{R} \, |b|.
$$
so it is sufficient to estimate $\sin(\alpha)$ from below. Using notation from Figure \ref{fig:starshaed}, the length of interval $[d,b]$ equals $\frac{b}{\cos(\alpha)}$. Therefore,
$$
\sin(\alpha) = \frac{R}{|b|/\cos(\alpha)} \implies \tan(\alpha) = \frac{R}{|b|}
$$
As $\sin^2(\alpha) = \frac{\tan^2(\alpha)}{1+\tan^2(\alpha)}$ we have
$$
\sin^2(\alpha) = \frac{R^2}{R^2 + b^2} \geq \frac{R^2}{4b^2} \implies \sin(\alpha) \geq \frac{R}{2|b|},
$$
where we used $R^2 \leq |b|^2 \leq 3\,|b|^2$. We conclude that $\dist(\partial U, c) \geq 2 \varepsilon$. As this argument can be repeated for all $c \in \partial (\kappa_{\varepsilon} U)$, we obtain $\dist(\partial U, \kappa_{\varepsilon} U) \geq 2\,\varepsilon$. The second statement follows from observation that the set $U-x_0$ is star-shaped with respect to the ball $B_R$.
\end{proof}


\begin{figure}
\begin{tikzpicture}[scale = 6]

\coordinate (0) at (0, 0);
\coordinate (c1) at (1.6, 0);
\coordinate (c2) at (1, 1);  
\coordinate (c3) at (0.3, 0.8);   
\coordinate (c4) at (-0.3, 0);
\coordinate (c5) at (0.4, -0.4);
\coordinate (x0) at (0.7, 0.2); 
\coordinate (c) at (0.87, 196/300);

\draw[black, ultra thick] plot [smooth cycle] coordinates {(c1) (c2) (c3) (c4) (c5)};

\draw[black] (x0) -- (c2);

\draw[fill] (0.7,0.2) circle [radius = 0.015];
\node [below left, black] at (x0) {$0$};

\draw[fill] (1,1) circle [radius = 0.015];
\node [above right, black] at (c2) {$b$};

\draw[fill] (0.87, 196/300) circle [radius = 0.015];
\node [right, black] at (c) {$c$};

\draw[black, dashed] (0.2,31/80) -- (1.2,1/80);

\draw plot [smooth, tension = 1.8] coordinates {(0.2,31/80) (0.6,-1/15) (1.2,1/80)}; 

\draw [dashed] plot [smooth, tension = 1.8] coordinates {(0.2,31/80) (0.79,0.4) (1.2,1/80)}; 

\draw[black]  (0.2,31/80) -- (c2);
\draw[black]  (1.2,1/80) -- (c2);

\coordinate (d1) at (0.2,31/80);
\coordinate (d2) at (1.2,1/80);
\coordinate (e1) at (5011/6675, 5401/6675); 
\coordinate (e2) at (189/445, 249/445); 

\pic [draw, -, "$\cdot$", angle eccentricity = 0.5] {angle = c--x0--d1};

\pic [draw, -, "$\cdot$", angle eccentricity = 0.5] {angle = c--e1--c2};

\pic [draw, -, "$\alpha$", angle eccentricity = 0.6, angle radius = 1 cm] {angle = d1--c2--c};

\draw[black, dotted, thick]  (e1) -- (c);
\draw[fill] (e1) circle [radius = 0.015];
\node [above left, black] at (e1) {$e$};
\draw[fill] (d1) circle [radius = 0.015];
\node [left, black] at (d1) {$d\,$};

\node [scale = 1.5] at (1.68,0) {$\partial U$};
\node [scale = 1.5] at (1.18,0.5) {$\partial S$};
\node [scale = 1.5] at (0.3,0) {$T$};

\end{tikzpicture}

\caption{Three dimensional adaptation of the construction performed in Lemma \ref{lem:uniform_squeezing_star}. Point $b$ belongs to the boundary of star-shaped domain $U$ while point $c = \kappa_{\varepsilon} b$ belongs to the boundary of the rescaled set $\kappa_{\varepsilon} U$. Sphere $T \subset B_R$ is perpendicular to the interval $[0,b]$ and since $U$ is star-shaped with respect to the ball $B_R$, the cone $S$ with base $T$ and apex $b$ lies inside $U$.}
\centering
\label{fig:starshaed}
\end{figure}
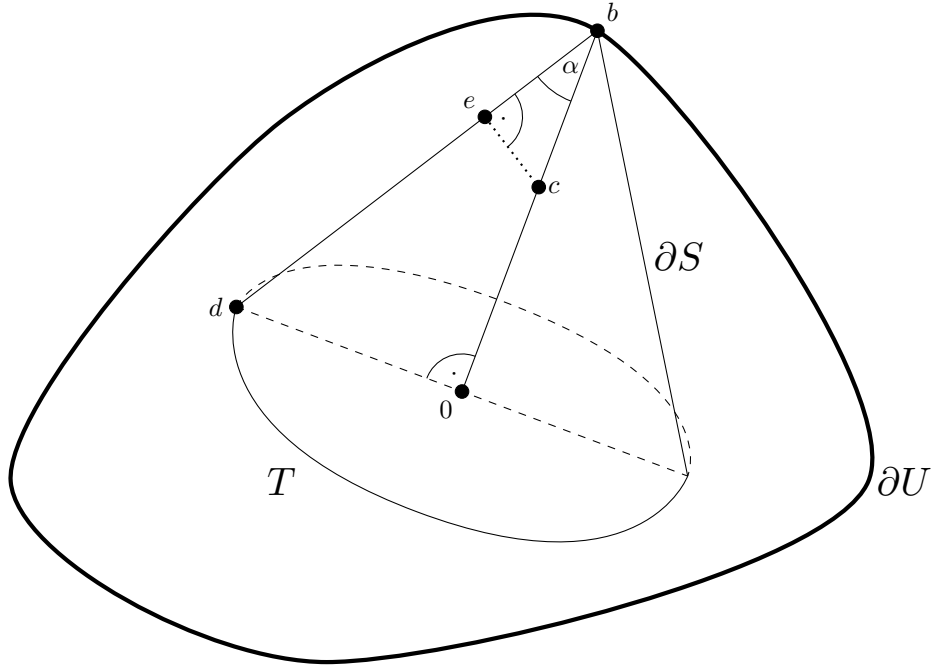


\noindent On star-shaped domain we can define mollification with squeezing as in Definition \ref{def:mol_in_sp}: 

\begin{Def}[Mollification with squeezing on star-shaped domain]\label{def:mol_in_star_shaped}
Let $U$ be a star-shaped domain with respect to the ball $B_R(x_0)$. Given $u \in W^{1,1}_0(U)$ we extend it with $0$ to $\R^d$ and define
$$
    \mathcal{S}^{\varepsilon}_U u(x) := \int_{\R^d} u\left(x_0 +  \frac{x-x_0 -y}{\kappa_{\varepsilon}} \right) \, \eta_{\varepsilon}(y) \diff y,
$$
where $\kappa_{\varepsilon} = 1 - \frac{4\,\varepsilon}{R}$.
\end{Def}

\noindent Reader may think about the case $x_0 = 0$ first.

\begin{lem}\label{lem:compact_support_smooth_generalOmega} Function $\mathcal{S}^{\varepsilon}_U u$ from Definition \ref{def:mol_in_star_shaped} belongs to $C_c^{\infty}(U)$.
\end{lem}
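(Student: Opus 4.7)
The plan is to verify the two conclusions---$C^\infty$ regularity and the inclusion of $\mathrm{supp}(\mathcal{S}^\varepsilon_U u)$ in a compact subset of $U$---separately by direct inspection of the defining integral, with Lemma~\ref{lem:uniform_squeezing_star} carrying essentially all of the geometric content.

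For smoothness, I would begin with the substitution $z=(x-x_0-y)/\kappa_\varepsilon$, equivalently $y=x-x_0-\kappa_\varepsilon z$ and $\diff y = \kappa_\varepsilon^d\diff z$, which recasts the definition as
\[
\mathcal{S}^\varepsilon_U u(x)= \kappa_\varepsilon^d \int_{\R^d} u(x_0+z)\, \eta_\varepsilon\bigl(x-x_0-\kappa_\varepsilon z\bigr)\diff z.
\]
This is a standard convolution-type integral whose $x$-dependence sits entirely in the $C_c^\infty$ kernel $\eta_\varepsilon$. Since $u$ is locally integrable on $\R^d$ after extension by zero, differentiation under the integral sign is justified at every order and yields $\mathcal{S}^\varepsilon_U u \in C^\infty(\R^d)$.

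For the support, the integrand in the original definition is nonzero only when simultaneously $y\in\overline{B_\varepsilon}$ (from $\mathrm{supp}\,\eta_\varepsilon$) and $x_0+(x-x_0-y)/\kappa_\varepsilon \in U$ (because the zero-extended $u$ vanishes off $U$). The second condition is equivalent to $x-x_0-y\in \kappa_\varepsilon(U-x_0)$, so $\mathcal{S}^\varepsilon_U u(x)\neq 0$ forces $x\in x_0+\kappa_\varepsilon(U-x_0)+\overline{B_\varepsilon}$. Approximating any element of $\overline{B_\varepsilon}$ by points of the open ball $\varepsilon B$, I would observe that this set lies inside $x_0+\overline{\kappa_\varepsilon(U-x_0)+\varepsilon B}$, which by Lemma~\ref{lem:uniform_squeezing_star} is contained in $U$. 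Boundedness of $U$ makes this closed set a compact subset of $U$, yielding the desired compact support.

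I do not expect any real obstacle: the only mildly delicate point is the inclusion $\kappa_\varepsilon(U-x_0)+\overline{B_\varepsilon}\subset \overline{\kappa_\varepsilon(U-x_0)+\varepsilon B}$, which merely reflects that the closed ball $\overline{B_\varepsilon}$ is the closure of the open ball $\varepsilon B$. All the geometric substance has already been packaged into Lemma~\ref{lem:uniform_squeezing_star}, so the proof ultimately amounts to translating that information into the language of supports.
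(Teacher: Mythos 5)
Your proof is correct and follows essentially the same strategy as the paper: standard convolution smoothness plus the observation that the support lies in $x_0+\overline{\kappa_\varepsilon(U-x_0)+\varepsilon B}$, which Lemma~\ref{lem:uniform_squeezing_star} places compactly inside $U$. The only cosmetic difference is that you argue the support inclusion directly (integrand nonzero $\Rightarrow$ $x$ in the set) whereas the paper phrases it in contrapositive form; both hinge on the same elementary closure inclusion $\kappa_\varepsilon(U-x_0)+\overline{B_\varepsilon}\subset\overline{\kappa_\varepsilon(U-x_0)+\varepsilon B}$, and your explicit change of variables for the smoothness step is a welcome amplification of the paper's one-line appeal to standard facts.
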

\begin{proof}
The smoothness is clear from standard properties of convolutions. Concerning compact support, we claim that $\mathcal{S}^{\varepsilon}_U u$ is supported in $x_0 + \overline{\kappa_{\varepsilon} \, (U-x_0) + \varepsilon \, B }$ which is a compact subset of $U$ due to Lemma \ref{lem:uniform_squeezing_star}. Indeed, let $x \notin x_0 + \overline{\kappa_{\varepsilon} \, (U-x_0) + \varepsilon \, B }$ and suppose that there is $y$ with $|y| \leq \varepsilon$ such that $x_0 + \frac{x-x_0 -y}{\kappa_{\varepsilon}} \in U$. Then, we can write
$$
x = x_0 +  \kappa_{\varepsilon} \, \left(x_0 + \frac{x - x_0 -y}{\kappa_{\varepsilon}} - x_0\right) + y
$$
so that $x \in x_0 + \overline{\kappa_{\varepsilon} \, (U-x_0) + \varepsilon \, B}$ raising contradiction. It follows that for $x \in x_0 + \overline{\kappa_{\varepsilon} \, (U-x_0) + \varepsilon \, B}$ we have either 
$$
x_0 + \frac{x-x_0 -y}{\kappa_{\varepsilon}} \in U \mbox{ and } |y| > \varepsilon 
\qquad 
\mbox{ or } 
\qquad 
x_0 + \frac{x-x_0 -y}{\kappa_{\varepsilon}} \notin U
$$
so that the integral $\int_{\R^d} u\left(x_0 +  \frac{x-x_0 -y}{\kappa_{\varepsilon}} \right) \, \eta_{\varepsilon}(y) \diff y = 0$.
\end{proof}

\noindent To move from star-shaped domains to Lipschitz ones we will use the following decomposition cf. \cite[Lemma 3.14]{MR2084891}.

\begin{lem}\label{lem:star-sh-dec}
Suppose that $\Omega \subset \R^d$ is a bounded Lipschitz domain. Then, there exist domains $\{U_i\}_{i=1,...,n}$ such that 
$$
\overline{\Omega} \subset \bigcup_{i=1}^n U_i.
$$
and $\Omega \cap U_i$ is star-shaped with respect to some ball $B_{R_i}(x_i)$.
\end{lem}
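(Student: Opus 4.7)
The plan is to build the cover $\{U_i\}$ from two types of sets: (a) small open balls sitting compactly inside $\Omega$, each of which is trivially star-shaped with respect to itself; and (b) local neighbourhoods of boundary points, constructed from the Lipschitz graph parametrization, where the star-shaped property will follow from a standard cone argument based on the Lipschitz constant. Since $\overline{\Omega}$ is compact, a finite subcover of the resulting open cover then yields the desired family.

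For every interior point $x\in\Omega$ I would simply fix an open ball $U_x\subset\subset \Omega$ containing $x$; then $\Omega\cap U_x=U_x$ is star-shaped with respect to any ball it contains. The real work happens at the boundary. For each $x_0\in\partial\Omega$, the Lipschitz assumption furnishes, after translation and rotation of coordinates, some $r_0>0$, a cylinder $Q=B'_{r_0}(0)\times(-r_0,r_0)$, and a Lipschitz function $f$ on $B'_{r_0}(0)$ with $f(0)=0$ and Lipschitz constant $L$, such that
\[
\Omega\cap Q=\{(y',y_d)\in Q:\,y_d>f(y')\}.
\]
I would then choose parameters $0<\rho\ll r_0$ and $h\in(L\rho,\,r_0)$, and define $U_{x_0}:=B'_{\rho}(0)\times(-r_0,r_0)$ together with the candidate centre $\bar x:=(0',h)$. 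For sufficiently small $R$, the ball $B_R(\bar x)$ lies inside $\Omega\cap U_{x_0}$.

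The star-shaped property then reduces to checking that, for any $\bar z\in B_R(\bar x)$ and any $(y',y_d)\in\Omega\cap U_{x_0}$, the segment connecting them stays above the graph of $f$ and inside the cylinder. The Lipschitz estimate
\[
f(t\bar z'+(1-t)y')\leq f(y')+tL|\bar z'-y'|
\]
together with $y_d>f(y')$ and the gap $\bar z_d-f(y')\geq h-L\rho-R>0$ (ensured by the choice of $h$) yields the former; the latter is automatic because the cylinder $U_{x_0}$ is convex. The family $\{U_x:x\in\Omega\}\cup\{U_{x_0}:x_0\in\partial\Omega\}$ is then an open cover of $\overline{\Omega}$, and compactness produces the sought finite subcover.

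The main obstacle is the coupling of the parameters $\rho$, $h$, $R$: one must first fix $\rho$ small enough that the boundary piece really sits inside the graph cylinder, then choose $h$ large enough to dominate the Lipschitz oscillation $L\rho$ over that piece, and finally shrink $R$ so the ball $B_R(\bar x)$ is contained in $\Omega\cap U_{x_0}$ and none of the segments emanating from it exits through the lateral boundary of the cylinder. Everything else is routine compactness and a trivial interior cover.
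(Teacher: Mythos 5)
The paper does not prove this lemma; it simply cites it as \cite[Lemma 3.14]{MR2084891}. Your self-contained proof via the Lipschitz graph parametrization is the standard construction and is a reasonable route, so the comparison is really between "cite" and "prove." The plan itself is sound: interior balls are trivially star-shaped, boundary cylinders are handled by a cone argument, and compactness gives the finite subcover.

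There is one quantitative slip you should repair. You pick $\bar x=(0',h)$ with $h\in(L\rho,r_0)$ and then shrink $R$. But tracing the inequality through, to get $t\bar z_d+(1-t)y_d>f(t\bar z'+(1-t)y')$ you first reduce (using $y_d>f(y')$ and the Lipschitz bound) to the requirement $\bar z_d-f(y')\ge L\,|\bar z'-y'|$ for all $\bar z\in B_R(\bar x)$ and all admissible $y'$. Estimating $\bar z_d\ge h-R$, $f(y')\le L\rho$ and $|\bar z'-y'|\le R+\rho$ gives the sufficient condition $h\ge(1+L)R+2L\rho$. So even as $R\to0$ you need $h>2L\rho$, not merely $h>L\rho$; the factor $2$ comes from $f(y')$ and $L|\bar z'-y'|$ each contributing an $L\rho$. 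This is a local bookkeeping correction, not a conceptual one: choose $h\in(2L\rho,r_0)$ (shrinking $\rho$ if necessary so $2L\rho<r_0$) and then $R$ small enough that $(1+L)R\le h-2L\rho$, and your argument closes. You should also record explicitly the side conditions $R<\rho$ and $h+R<r_0$ so that $B_R(\bar x)\subset\Omega\cap U_{x_0}$, and the observation that $\rho<r_0$ guarantees $U_{x_0}\subset Q$ so that $\Omega\cap U_{x_0}$ really is the subgraph region.
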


\subsection{Approximating sequence and proof of Theorem \ref{thm:main_theorem}}

\noindent We are in position to define the approximating sequence. Let $\Omega$ be a Lipschitz bounded domain. From Lemma \ref{lem:star-sh-dec} we obtain family of domains such that $\overline{\Omega} \subset \bigcup_{i=1}^n U_i$ where $\{\Omega \cap U_i\}_{i=1,...,n}$ are star-shaped domains with respect to balls $B_{R}(x_i)$ (without loss of generality, we may assume that the radii of the balls are the same by taking $R := \min_{i=1,...n} R_i$). In particular, $\{U_i\}_{i=1,...,n}$ is an open covering of $\overline{\Omega}$ so there exists partition of unity related to this covering: family of functions $\{\theta_i\}_{i=1,...,n}$ such that
$$
\theta_i \in C_c^{\infty}(U_i), \qquad 0 \leq \theta_i \leq 1, \qquad \sum_{i=1}^n \theta_i = 1 \mbox{ on } \overline{\Omega}.
$$
Given $u \in W^{1,1}_0(\Omega) \cap L^{\infty}(\Omega)$ we extend it with 0 as above and we set
\begin{equation}\label{eq:approx_seq_generalOmega}
\mathcal{S}^{\varepsilon} u :=  \sum_{i=1}^n \mathcal{S}^{\varepsilon}_{U_i}(u \, \theta_i) = \sum_{i=1}^n 
\int_{B_{\varepsilon}} ( u \, \theta_i) \left(x_i + \frac{x-x_i - y}{\kappa_{\varepsilon}}\right)  \, \eta_{\varepsilon}(y) \diff y
\end{equation}
where $\kappa_{\varepsilon} = 1 - \frac{4\,\varepsilon}{R}$. We note that since $u$ vanishes outside of $\Omega$, function $u\, \theta_i$ is supported in $\Omega \cap U_i$ which is star-shaped. \\

\noindent Before formulating the main result of this section, we will state and prove two technical lemmas concerning approximating sequence. 
\begin{lem}\label{lem:balls_uni_scaling}
Let $\kappa_{\varepsilon} = 1 - \frac{4\,\varepsilon}{R}$, $x \in \Omega$ and $|y| \leq \varepsilon$. Then, there exists a constant $C_{\Omega,R}$ such that for $\varepsilon \leq \frac{R}{8}$ we have $x_i + \frac{x-x_i - y}{\kappa_{\varepsilon}} \in B_{C_{\Omega,R} \varepsilon}(x)$.
\end{lem}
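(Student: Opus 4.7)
The plan is to prove this by direct computation, exploiting the structure of the affine map $x \mapsto x_i + (x - x_i - y)/\kappa_\varepsilon$.

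First, I would subtract $x$ from the expression of interest and put everything over the common denominator $\kappa_\varepsilon$, obtaining
\begin{equation*}
x_i + \frac{x - x_i - y}{\kappa_\varepsilon} - x \;=\; \frac{(1 - \kappa_\varepsilon)(x - x_i) - y}{\kappa_\varepsilon} \;=\; \frac{(4\varepsilon/R)(x - x_i) - y}{\kappa_\varepsilon},
\end{equation*}
where I used $1 - \kappa_\varepsilon = 4\varepsilon/R$. Taking absolute values and using the triangle inequality gives
\begin{equation*}
\left| x_i + \frac{x - x_i - y}{\kappa_\varepsilon} - x \right| \;\leq\; \frac{1}{\kappa_\varepsilon} \left( \frac{4\varepsilon}{R}\, |x - x_i| + |y| \right).
\end{equation*}

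Next I would control the two factors separately. Since $\Omega$ is bounded and the finitely many centers $x_i$ lie in the fixed covering, the quantity $D := \sup_{x \in \Omega,\, i = 1, \dots, n} |x - x_i|$ is a finite constant depending only on $\Omega$ and on the chosen decomposition (hence on $R$). The assumption $|y| \leq \varepsilon$ is direct. Finally, the restriction $\varepsilon \leq R/8$ yields $\kappa_\varepsilon \geq 1 - 4(R/8)/R = 1/2$, so $1/\kappa_\varepsilon \leq 2$. Combining these bounds gives
\begin{equation*}
\left| x_i + \frac{x - x_i - y}{\kappa_\varepsilon} - x \right| \;\leq\; 2 \left( \frac{4 D}{R} + 1 \right) \varepsilon \;=:\; C_{\Omega, R}\, \varepsilon,
\end{equation*}
which is exactly the claim.

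There is no real obstacle here; the statement is an elementary consequence of the fact that the defining map is an affine contraction toward $x_i$ of ratio $1/\kappa_\varepsilon$ which is $1 + O(\varepsilon)$, composed with a shift of size $|y| \leq \varepsilon$. The only point requiring a moment of thought is making explicit that $|x - x_i|$ is uniformly bounded, which follows immediately from $\Omega$ being bounded and $x_i$ being among finitely many fixed centers produced by Lemma~\ref{lem:star-sh-dec}.
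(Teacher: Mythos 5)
Your proof is correct and follows essentially the same route as the paper: subtract $x$, combine over the common denominator, apply the triangle inequality, use $1/\kappa_\varepsilon \le 2$ for $\varepsilon \le R/8$, and bound $|x-x_i|$ by a constant depending only on $\Omega$ (the paper uses $\operatorname{diam}(\Omega)$ directly since $x_i \in \Omega$, giving $C_{\Omega,R} = 8\operatorname{diam}(\Omega)/R + 2$; your constant $2(4D/R+1)$ is the same thing up to the choice of bound for $|x-x_i|$).
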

\begin{proof}
Note that for $\varepsilon \leq \frac{R}{8}$, we have $\frac{1}{\kappa_{\varepsilon}} \leq 2$. We compute
$$
\left|x_i + \frac{x-x_i - y}{\kappa_{\varepsilon}} - x \right| = 
\left|(x_i-x)\left(1-\frac{1}{\kappa_{\varepsilon}}\right) - \frac{y}{\kappa_{\varepsilon}} \right| \leq |x_i - x| \frac{1-\kappa_{\varepsilon}}{\kappa_{\varepsilon}} +  \frac{\varepsilon}{\kappa_{\varepsilon}} \leq 8\,\frac{|x_i - x|}{R}\, \varepsilon + 2\,\varepsilon.
$$
As $|x_i - x| \leq \mbox{diam}(\Omega)$ (the diameter of $\Omega$), we choose $C_{\Omega, R} := \frac{8 \, \text{diam}(\Omega)}{R} + 2$.
\end{proof}

\begin{lem}\label{lem:technical_convergence_of_mollification_gen}
Let $u \in W^{1,1}_0(\Omega)$ be such that $\mathcal{H}(u)<\infty$ and consider its extension to $\R^d$. Then,
\begin{enumerate}[label=(H\arabic*)]
    \item \label{item:conv_large_term_general1} $\psi\left(x_i + \frac{x-x_i}{\kappa_{\varepsilon}}, (\left|\nabla u\right| \, \theta_i)\left(x_i + \frac{x-x_i}{\kappa_{\varepsilon}}\right) \right) \to \psi(x,\left|\nabla u\right| \, \theta_i)$ in $L^1(\R^d)$,
    \item \label{item:conv_large_term_general2} $ \int_{B_{\varepsilon}}   \psi\left(x_i + \frac{x-x_i - y}{\kappa_{\varepsilon}},  (\left|\nabla u\right| \, \theta_i)\left(x_i + \frac{x-x_i - y}{\kappa_{\varepsilon}}\right) \right)  \eta_{\varepsilon}(y) \diff y \to \varphi\left(x,  \left|\nabla u\right| \, \theta_i \right)$ in $L^1(\R^d)$.
\end{enumerate}
\end{lem}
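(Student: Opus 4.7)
The plan is to reduce both parts to standard $L^1$-continuity facts applied to one distinguished integrable function: dilation continuity for \ref{item:conv_large_term_general1} and convolution with an approximate identity for \ref{item:conv_large_term_general2}. Set
\begin{equation*}
h(x) := \psi(x, |\nabla u|(x)\,\theta_i(x))\,\mathds{1}_{\Omega}(x),
\end{equation*}
extended by zero to $\R^d$. The first preparatory step is to check that $h \in L^1(\R^d)$: since $0 \leq \theta_i \leq 1$ and $\xi \mapsto \psi(x,\xi)$ is nondecreasing (convex with value $0$ at $0$), one has $0 \leq h(x) \leq \psi(x,|\nabla u|(x))$, so $\|h\|_{L^1(\R^d)} \leq \mathcal{H}(u,\Omega) < \infty$. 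I should also flag that the symbol $\varphi$ appearing on the right-hand side of \ref{item:conv_large_term_general2} is a notational slip inherited from the parallel special-case lemma; in Section~\ref{sect:main_result_general} the only $N$-function in play is $\psi$, and what I will prove is convergence to $\psi(x,|\nabla u|\,\theta_i)$.

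For part \ref{item:conv_large_term_general1}, let $T_\varepsilon h(x) := h\bigl(x_i + (x - x_i)/\kappa_\varepsilon\bigr)$, so that the integrand in \ref{item:conv_large_term_general1} equals $T_\varepsilon h(x)$. The operator $T_\varepsilon$ is a dilation about $x_i$ with scaling factor $1/\kappa_\varepsilon \to 1$. I will invoke the standard $L^1$-continuity of such dilations, proved by density: for $h \in C_c(\R^d)$, uniform continuity and a uniformly bounded support give $T_\varepsilon h \to h$ uniformly, hence in $L^1$; then a triangle-inequality argument together with the change of variables identity $\|T_\varepsilon f\|_{L^1} = \kappa_\varepsilon^d \|f\|_{L^1}$ extends the conclusion to arbitrary $h \in L^1(\R^d)$.

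For part \ref{item:conv_large_term_general2}, I rewrite
\begin{equation*}
x_i + \frac{x - x_i - y}{\kappa_\varepsilon} \;=\; x_i + \frac{(x-y) - x_i}{\kappa_\varepsilon},
\end{equation*}
so the integrand equals $T_\varepsilon h(x-y)$, and the whole expression in \ref{item:conv_large_term_general2} equals the convolution $(T_\varepsilon h) * \eta_\varepsilon(x)$. Since $\|\eta_\varepsilon\|_{L^1(\R^d)} = 1$, Young's inequality gives
\begin{equation*}
\|(T_\varepsilon h)*\eta_\varepsilon - h\|_{L^1(\R^d)} \;\leq\; \|T_\varepsilon h - h\|_{L^1(\R^d)} + \|h*\eta_\varepsilon - h\|_{L^1(\R^d)}.
\end{equation*}
The first term tends to $0$ by part \ref{item:conv_large_term_general1}, and the second tends to $0$ because $\eta_\varepsilon$ is an approximate identity on $L^1(\R^d)$ and $h \in L^1(\R^d)$.

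The only delicate step, and the one I expect to need most care, is the $L^1$-continuity of the dilation $T_\varepsilon$ used in \ref{item:conv_large_term_general1}. Unlike for translations, one cannot argue directly via pointwise a.e.\ convergence, because a generic $L^1$ function is not Lebesgue-stable under composition with a bijection and the integrand $h$ is only integrable (no continuity in $x$ is assumed on $\psi$ beyond Assumption~\ref{ass:continuity_abstract}). The density argument through $C_c(\R^d)$ is therefore essential, and the bound $h \leq \psi(\cdot,|\nabla u|)$ established at the outset is exactly what places $h$ in $L^1(\R^d)$ so that this argument applies.
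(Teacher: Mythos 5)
Your proof is correct, and it takes a genuinely different route from the paper's for part \ref{item:conv_large_term_general1}. The paper argues via the Vitali convergence theorem: it asserts pointwise convergence of the dilated integrand, then proves equiintegrability of the family $\left\{\psi\left(x_i + \frac{x-x_i}{\kappa_{\varepsilon}}, (\left|\nabla u\right|\theta_i)\left(x_i + \frac{x-x_i}{\kappa_{\varepsilon}}\right)\right)\right\}_{\varepsilon}$ by a change of variables (using $\kappa_\varepsilon^{-d}\le 2^d$ and $0\le\theta_i\le1$) together with the absolute continuity of the integral of $\psi(x,|\nabla u|)$, exactly as in the special-case Lemma~\ref{lem:technical_convergence_of_mollification}. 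You instead set $h(x)=\psi(x,|\nabla u|(x)\theta_i(x))\mathds{1}_\Omega(x)\in L^1(\R^d)$ and prove $T_\varepsilon h\to h$ in $L^1$ directly by the standard dilation-continuity argument (density of $C_c(\R^d)$ plus the scaling identity $\|T_\varepsilon f\|_{1}=\kappa_\varepsilon^d\|f\|_1$), and for \ref{item:conv_large_term_general2} both you and the paper finish with Young's inequality, you making the approximate-identity step $\|h*\eta_\varepsilon-h\|_1\to0$ explicit. Your route has a real advantage: it sidesteps the paper's claim that the dilated integrand converges pointwise, which for a merely integrable integrand is not automatic (evaluation along the single path $x\mapsto x_i+(x-x_i)/\kappa_\varepsilon$ is not controlled by Lebesgue points), so your density argument is the more robust justification of precisely that step; what the paper's route buys is an explicit equiintegrability statement in the same Vitali style used throughout Section~\ref{sect:main_result_general}, which is the form in which the lemma is invoked in the proof of Theorem~\ref{res:approx_theorem_GENERAL} (though $L^1$ convergence of the majorant, which you prove, suffices there via Corollary~\ref{cor:vitali}). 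Your observation that the $\varphi$ on the right-hand side of \ref{item:conv_large_term_general2} should read $\psi$ is also correct; it is a slip carried over from the special-case lemma.
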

\begin{proof}
Concerning \ref{item:conv_large_term_general1}, we note that the convergence holds in the pointwise sense. Moreover, the considered sequence is supported on $\Omega \cap U_i$. Therefore, to establish convergence in $L^1(\R^d)$, it is sufficient to prove equiintegrability of the sequence $\left\{\psi\left(x_i + \frac{x-x_i}{\kappa_{\varepsilon}}, (\left|\nabla u\right| \, \theta_i)\left(x_i + \frac{x-x_i}{\kappa_{\varepsilon}}\right) \right)\right\}_{\varepsilon}$ and apply Vitali convergence theorem. To this end, we need to prove  
$$
\forall_{\eta>0}\, \exists_{\delta>0} \, \forall_{A\subset \Omega \cap U_i, |A|\leq \delta} \, \qquad \int_{A} \psi\left(x_i + \frac{x-x_i}{\kappa_{\varepsilon}},  (\left|\nabla u\right| \, \theta_i)\left(x_i + \frac{x-x_i}{\kappa_{\varepsilon}}\right) \right) \diff x \leq \eta.
$$
We fix $\eta$ and arbitrary $A \subset \Omega \cap U_i$. Using convexity,
$$
\psi\left(x_i + \frac{x-x_i}{\kappa_{\varepsilon}}, (\left|\nabla u\right| \, \theta_i)\left(x_i + \frac{x-x_i}{\kappa_{\varepsilon}}\right) \right) \leq \psi\left(x_i + \frac{x-x_i}{\kappa_{\varepsilon}}, \left|\nabla u\right|\left(x_i + \frac{x-x_i}{\kappa_{\varepsilon}}\right) \right)
$$
as $0 \leq \theta_i \leq 1$. Second, using change of variables we have  
\begin{multline*}
\int_{A} \psi\left(x_i + \frac{x-x_i}{\kappa_{\varepsilon}}, \left|\nabla u\right|\left(x_i + \frac{x-x_i}{\kappa_{\varepsilon}}\right) \right) \diff x = (\kappa_{\varepsilon})^d \int_{\widetilde{A}} \psi(x,\left|\nabla u\right|(x)) \diff x \leq \int_{\widetilde{A}} \psi(x,\left|\nabla u\right|(x)) \diff x,
\end{multline*}
where $\widetilde{A}$ is a set obtained from $A$ after the performed change of variables. Note that measures of these sets satisfy
$
|\widetilde{A}|\leq \frac{1}{\kappa_{\varepsilon}^d} |A| \leq 2^d \, |A|.
$
Having this in mind, we let 
$$
\omega(\tau) := \sup_{C \subset \R^d: |C|\leq \tau} \int_C \psi(x,\left|\nabla u\right|(x)) \diff x.
$$
Function $\omega(\tau)$ is a non-decreasing function, continuous at 0 because $\mathcal{H}(u, \Omega) < \infty$. Therefore, we may find $\tau$ such that $\omega(\tau) \leq  \eta$. Then, we choose $\delta = 2^{-d} \, \tau$ to concude the proof of \ref{item:conv_large_term_general1}. Finally, \ref{item:conv_large_term_general2} follows from Young's convolutional inequality and \ref{item:conv_large_term_general1}.
\end{proof}

\begin{thm}[Theorem \ref{thm:main_theorem} in the general case]\label{res:approx_theorem_GENERAL}
Let $u \in W^{1,\psi}_0(\Omega) \cap L^{\infty}(\Omega)$ where $\psi$ satisfies Assumptions \ref{ass:Nfunction} and \ref{ass:continuity_abstract}. Let $\mathcal{H}$ be given with \eqref{eq:general_functional}. Suppose that 
$$
q \leq p + {\alpha} \max\left( 1, \frac{p}{d}\right).
$$
Consider sequence $\left\{\mathcal{S}^{\varepsilon} u\right\}_{\varepsilon > 0}$ as in \eqref{eq:approx_seq_generalOmega} with 
$\varepsilon \leq \frac{R}{8}$. Then,
\begin{enumerate}[label=(I\arabic*)]
\item \label{item:main_thm_gen_item1} $\mathcal{S}^{\varepsilon} u \in C_c^{\infty}(\Omega)$,
\item \label{item:main_thm_gen_item2} $\mathcal{H}\left(\mathcal{S}^{\varepsilon} u, \Omega\right) \to \mathcal{H}(u, \Omega)$ as $\varepsilon \to 0$,
\item \label{item:main_thm_gen_item3} $\mathcal{S}^{\varepsilon} u \to u$ in $W^{1,\psi}(\Omega)$ as $\varepsilon \to 0$, 
\item \label{item:main_thm_gen_item4} space $C_c^{\infty}(\Omega)$ is dense in $W^{1,\psi}_0(\Omega) $ and Lavrentiev phenomenon does not occur, i.e. for all boundary data $u_0 \in W^{1,q}(\Omega)$:
\begin{equation*}
    \inf_{u \in u_0 + W_0^{1,p}(\Omega)} \mathcal{H}(u, \Omega) = \inf_{u \in u_0 + W_0^{1,q}(\Omega)} \mathcal{H}(u, \Omega) = \inf_{u \in u_0 + C_c^{\infty}(\Omega)} \mathcal{H}(u, \Omega).
\end{equation*}
\end{enumerate}
\end{thm}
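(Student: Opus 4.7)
The strategy is to repeat the argument from Theorem~\ref{res:approx_theorem} summand by summand over the partition of unity, replacing the elementary choice of minimiser in Lemma~\ref{lem:existence_of_minimizer} by the second convex conjugate $\psi_{x,\gamma}^{**}$ controlled through Lemma~\ref{lem:conjugates_estimate_from_both_sides}. Property~\ref{item:main_thm_gen_item1} is immediate from Lemma~\ref{lem:compact_support_smooth_generalOmega} applied to each $\mathcal{S}^{\varepsilon}_{U_i}(u\theta_i)$, since a finite sum of elements of $C_c^{\infty}(\Omega)$ lies in $C_c^{\infty}(\Omega)$. Property~\ref{item:main_thm_gen_item4} will then follow from~\ref{item:main_thm_gen_item3} together with Lemmas~\ref{lem:density_of_bounded_in_musielak} and~\ref{lem:density_implies_lavr}, so the real content is~\ref{item:main_thm_gen_item2}--\ref{item:main_thm_gen_item3}.

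The gradient $\nabla\mathcal{S}^{\varepsilon} u = \sum_i \nabla \mathcal{S}^{\varepsilon}_{U_i}(u\theta_i)$ admits two useful representations obtained by differentiating in $x$: either transferring the derivative onto $\nabla(u\theta_i)$ with an extra factor $1/\kappa_{\varepsilon}$, or onto $\nabla\eta_{\varepsilon}$. In the regime $p\le d$ I bound
$\|\nabla\mathcal{S}^{\varepsilon}_{U_i}(u\theta_i)\|_{\infty}\le \|u\theta_i\|_{\infty}\|\nabla\eta_{\varepsilon}\|_1 \le C\|u\|_{\infty}\varepsilon^{-1}$,
while for $p>d$ I transfer the gradient onto $u\theta_i$ and use H\"older's inequality with $\|\eta_{\varepsilon}\|_{p'}\le C\varepsilon^{-d/p}$ exactly as in the special case, noting that $\nabla(u\theta_i)=\theta_i\nabla u+u\nabla\theta_i$ is controlled by $\|\nabla u\|_p$, $\|u\|_{\infty}$ and $\|\nabla\theta_i\|_{\infty}$. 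In either case I obtain $\|\nabla\mathcal{S}^{\varepsilon} u\|_{\infty}\le D\,\varepsilon^{-\min(1,d/p)}$ with $D$ independent of $\varepsilon$. Lemma~\ref{lem:balls_uni_scaling} shows that for $x\in\Omega$ the argument $x_i+\frac{x-x_i-y}{\kappa_{\varepsilon}}$ lies in $B_{C_{\Omega,R}\varepsilon}(x)$, so I may legitimately invoke Lemma~\ref{lem:conjugates_estimate_from_both_sides}\ref{convex_conj_estimate_item1} on the ball $B_{\gamma}(x)$ with $\gamma=C_{\Omega,R}\varepsilon$ to replace $\psi(x,\cdot)$ by $\psi_{x,\gamma}^{**}$ at the cost of universal constants $\mathcal{M},\mathcal{N}$.

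Then, applying the $\Delta_2$ condition~\ref{ass:Nf_delta2} to absorb both the factor $1/\kappa_\varepsilon\le 2$ and the factor $n$ arising from $\psi(x,\sum_i a_i)\le \frac{1}{n}\sum_i\psi(x,na_i)$, convexity of $\psi_{x,\gamma}^{**}$ together with Jensen's inequality move the functional through the mollification, and finally property~\ref{convex_conj_estimate_item2} of Lemma~\ref{lem:conjugates_estimate_from_both_sides} returns the bound to $\psi$ evaluated at the shifted point. This produces a pointwise dominating integrand of the form
\[
\psi(x,|\nabla\mathcal{S}^{\varepsilon} u|(x))\le C\sum_{i=1}^n\int_{B_{\varepsilon}}\psi\!\left(x_i+\tfrac{x-x_i-y}{\kappa_{\varepsilon}},\,|\nabla(u\theta_i)|(x_i+\tfrac{x-x_i-y}{\kappa_{\varepsilon}})\right)\eta_{\varepsilon}(y)\diff y+\mathcal{N},
\]
whose right-hand side converges in $L^1(\Omega)$ by Lemma~\ref{lem:technical_convergence_of_mollification_gen}\ref{item:conv_large_term_general2} (after estimating $|\nabla(u\theta_i)|\le |\nabla u|+\|u\|_{\infty}|\nabla\theta_i|$ and using that the extra bounded term contributes a uniformly integrable quantity via~\ref{ass:Nf_pq}). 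Hence $\{\psi(x,|\nabla\mathcal{S}^{\varepsilon}u|)\}_{\varepsilon}$ is uniformly integrable, while pointwise convergence to $\psi(x,|\nabla u|)$ is standard; Vitali's theorem delivers~\ref{item:main_thm_gen_item2}. For~\ref{item:main_thm_gen_item3} one applies the same scheme to $\nabla\mathcal{S}^{\varepsilon}u-\nabla u$: the elementary estimate $\psi(x,|a-b|)\le 2^{q-1}(\psi(x,|a|)+\psi(x,|b|))$ together with the $\Delta_2$ condition yield uniform integrability, so Vitali again gives convergence, and then Lemma~\ref{lem:conv_mus_orlicz_spaces_Delta2}\ref{item:mus-or:2} upgrades this to convergence in the Luxembourg norm of $W^{1,\psi}(\Omega)$. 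The main obstacle is the second step: marrying the partition of unity, the $\Delta_2$ condition, Jensen's inequality and Lemma~\ref{lem:conjugates_estimate_from_both_sides} so that the factor $D\varepsilon^{-\min(1,d/p)}$ remains within the admissible range of Assumption~\ref{ass:continuity_abstract} uniformly in $\varepsilon$; this is exactly where the exponent restriction $q\le p+\alpha\max(1,p/d)$ is used, through~\eqref{eq:approx_crucial_step} which is now hidden inside the construction of $\psi_{x,\gamma}^{**}$.
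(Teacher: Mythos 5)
Your proposal is correct and follows essentially the same route as the paper: bound $\|\nabla\mathcal{S}^\varepsilon u\|_\infty$ via Young's convolution inequality (using $\|u\|_\infty$ for $p\le d$, $\|\nabla u\|_p$ for $p>d$), invoke Lemma~\ref{lem:conjugates_estimate_from_both_sides} on balls of radius $C_{\Omega,R}\varepsilon$, push the partition of unity and the mollification through $\psi_{x,\gamma}^{**}$ by Jensen's inequality, return to $\psi$ via \ref{convex_conj_estimate_item2}, and finish with Vitali. One caution on the order you describe: the $\Delta_2$ condition is an assumption on $\psi$ only, and $\psi_{x,\gamma}^{**}$ (being the second convex conjugate of an infimum) need not inherit it, so the factors $1/\kappa_\varepsilon$ and $n$ must be absorbed \emph{after} the pointwise comparison $\psi_{x,\gamma}^{**}(\xi)\le\psi(y',\xi)$ — as the paper does — rather than before Jensen is applied; your final displayed estimate is correct, but that ordering is what makes the step legitimate, and the same remark applies to the subsequent $\Delta_2$-split of $|\nabla(u\theta_i)|$ into $\theta_i\nabla u$ and $u\nabla\theta_i$, which the paper instead performs before Jensen so that the bounded $u\nabla\theta_i$ contribution can be controlled directly by the $q$-growth bound \ref{ass:Nf_pq}.
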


\begin{proof}
The first property follows from Lemma \ref{lem:compact_support_smooth_generalOmega}. To prove convergence, we note that 
$$
\mathcal{H}\left(\mathcal{S}^{\varepsilon} u, \Omega\right) =
\int_{\Omega} \psi(x, \nabla \mathcal{S}^{\varepsilon} u(x) ) \diff x.
$$
To take mollification out of the function $\psi$ we want to use Jensen's inequality and Lemma \ref{lem:conjugates_estimate_from_both_sides}. The latter requires estimate on $\left\|\nabla \mathcal{S}^{\varepsilon} u \right\|_{\infty}$ \\

\noindent \underline{\textbf{Case 1: $p \leq d$.}} In this case we have $q \leq p + \alpha$. Using Young's convolution inequality we obtain:
\begin{equation}\label{eq:estimate_infinity_generalcase}
\left\|\nabla \mathcal{S}^{\varepsilon} u \right\|_{\infty}  \leq \sum_{i=1}^n \left\| u\, \theta_i \right\|_{\infty} \, \left\| \nabla \eta_{\varepsilon} \right\|_{1} \leq \sum_{i=1}^n \left\| u \right\|_{\infty} \, \left\| \nabla \eta_{\varepsilon} \right\|_{1} \leq \mathcal{D}\, (C_{\Omega, R}\,\varepsilon)^{-1},
\end{equation}
where we choose $\mathcal{D}:= n\, \|u\|_{\infty} \,\left\| \nabla \eta \right\|_{1} \, C_{\Omega, R}$ and $C_{\Omega, R}$ is a constant from Lemma \ref{lem:balls_uni_scaling}. Let $x \in \Omega$. Applying Lemma \ref{lem:conjugates_estimate_from_both_sides} with $\gamma = C_{\Omega, R}\,\varepsilon$ we obtain constants $\mathcal{M}$, $\mathcal{N}$ such that 
\begin{equation}\label{eq:estimate_split_xi}
\psi\left(x, \left|\nabla \mathcal{S}^{\varepsilon} u \right|\right) \leq \mathcal{M} \, \psi_{x,\,\gamma}^{**}\left(\left|\nabla \mathcal{S}^{\varepsilon} u\right|\right) + \mathcal{N}
\end{equation}
where function $\psi_{x,\,\gamma}^{**}$ is the second convex conjugate of the function defined in \eqref{eq:def_2nd_convex_conj}.
Now, we want to estimate $\psi_{x,\,\gamma}^{**}\left(\left|\nabla \mathcal{S}^{\varepsilon} u\right|\right)$. Due to its convexity, Jensen's inequality implies
\begin{align*}
\psi_{x,\,\gamma}^{**}\left(\,\left|\nabla \mathcal{S}^{\varepsilon} u\right|\right) &= \psi_{x,\,\gamma}^{**}\left(\left| \sum_{i=1}^n \int_{B_{\varepsilon}} \nabla_x ( u \, \theta_i) \left(x_i + \frac{x-x_i - y}{\kappa_{\varepsilon}}\right)  \, \eta_{\varepsilon}(y) \diff y \right|\,\right) \leq \\
&\leq
\int_{B_{\varepsilon}} \psi_{x,\,\gamma}^{**}\left( \sum_{i=1}^n \left| \nabla_x (u \, \theta_i)\left(x_i + \frac{x-x_i - y}{\kappa_{\varepsilon}}\right) \right|\,\right)  \, \eta_{\varepsilon}(y) \diff y \\
&\leq 
\frac{1}{2}\,\int_{B_{\varepsilon}} \psi_{x,\,\gamma}^{**}\left( \frac{2}{\kappa_{\varepsilon}} \,\sum_{i=1}^n \left| (\nabla u \, \theta_i)\left(x_i + \frac{x-x_i - y}{\kappa_{\varepsilon}}\right) \right|\, \right)  \, \eta_{\varepsilon}(y) \diff y \\
&\phantom{\leq \,} + \frac{1}{2}\,\int_{B_{\varepsilon}} \psi_{x,\,\gamma}^{**}\left( \frac{2}{\kappa_{\varepsilon}} \,\sum_{i=1}^n \left|(u \, \nabla \theta_i)\left(x_i + \frac{x-x_i - y}{\kappa_{\varepsilon}}\right) \right|\,\right)  \, \eta_{\varepsilon}(y) \diff y =: X+Y.
\end{align*}
Concerning term $Y$, using upper bound from Lemma \ref{lem:conjugates_estimate_from_both_sides} and $q$-growth cf. Assumption \ref{ass:Nfunction} \ref{ass:Nf_pq} we can estimate:
\begin{multline*}
 \psi_{x,\,\gamma}^{**}\left( \frac{2}{\kappa_{\varepsilon}} \,\left|\sum_{i=1}^n (u \, \nabla \theta_i)\left(x_i + \frac{x-x_i - y}{\kappa_{\varepsilon}}\right) \right|\, \right) \leq \\
 \leq C_2 \, \left(1+\left|  \frac{2\,n}{\kappa_{\varepsilon}}\, \|u\|_{\infty} \, \sup_{i=1,...,n} \|\nabla \theta_i\|_{\infty} \right|^q\right) \leq
 C_2 \, \left(1+\left|  4\,n\, \|u\|_{\infty} \, \sup_{i=1,...,n} \|\nabla \theta_i\|_{\infty} \right|^q\right),
\end{multline*}
where we used $\frac{1}{\kappa_{\varepsilon}} \leq 2$ for $\varepsilon \leq \frac{R}{8}$ in the second inequality. Therefore, using $\int_{B_{\varepsilon}} \eta_{\varepsilon}(y) \diff y = 1$
$$
Y \leq M\,C_2 \, \left(1+\left| 4\,n\, \|u\|_{\infty} \, \sup_{i=1,...,n} \|\nabla \theta_i\|_{\infty} \right|^q\right) := \|Y\|_{\infty}  < \infty,
$$
Concerning term $X$, we use Jensen's inequality again:
$$
X \leq \frac{M}{2\,n}\,\sum_{i=1}^n\int_{B_{\varepsilon}} \psi_{x,\,\gamma}^{**}\left( \frac{2\,n}{\kappa_{\varepsilon}} \left|  (\nabla u \, \theta_i)\left(x_i + \frac{x-x_i - y}{\kappa_{\varepsilon}}\right) \right|\, \right)  \, \eta_{\varepsilon}(y) \diff y
$$
so that we can study each summand independently. If $x_i + \frac{x-x_i - y}{\kappa_{\varepsilon}}$ does not belong to $\overline{\Omega}$ then $\psi_{x,\,\gamma}^{**}\left( \frac{2n}{\kappa_{\varepsilon}} \left|(\nabla u \, \theta_i)\left(x_i + \frac{x-x_i - y}{\kappa_{\varepsilon}}\right) \right| \right)= 0$ because $\nabla u$ vanishes at this point cf. Lemma \ref{lem:conjugates_estimate_from_both_sides} \ref{convex_conj_estimate_item2}. Otherwise, $x_i + \frac{x-x_i - y}{\kappa_{\varepsilon}} \in \overline{\Omega} \cap \overline{B_{C_{R,\Omega}\, \varepsilon}(x)}$ cf. Lemma \ref{lem:balls_uni_scaling} so that
\begin{equation*}
\psi_{x,\,\gamma}^{**}\left( \frac{2\,n}{\kappa_{\varepsilon}} \left| (\nabla u \, \theta_i)\left(x_i + \frac{x-x_i - y}{\kappa_{\varepsilon}}\right) \right|\, \right) \leq 
\psi\left(x_i + \frac{x-x_i - y}{\kappa_{\varepsilon}}, \frac{2\,n}{\kappa_{\varepsilon}}\, \left| (\nabla u \, \theta_i)\left(x_i + \frac{x-x_i - y}{\kappa_{\varepsilon}}\right) \right|\, \right)
\end{equation*}
due to Lemma \ref{lem:conjugates_estimate_from_both_sides}. By virtue of \eqref{eq:comp_dc} and Assumption \ref{ass:Nfunction} \ref{ass:Nf_delta2}, we have
\begin{multline*}
\psi\left(x_i + \frac{x-x_i - y}{\kappa_{\varepsilon}}, \frac{2\,n}{\kappa_{\varepsilon}}\, \left| (\nabla u \, \theta_i)\left(x_i + \frac{x-x_i - y}{\kappa_{\varepsilon}}\right) \right|\, \right) 
\leq \\ \leq
C_4^k\, \psi\left(x_i + \frac{x-x_i - y}{\kappa_{\varepsilon}}, \left|(\nabla u \, \theta_i)\left(x_i + \frac{x-x_i - y}{\kappa_{\varepsilon}}\right) \right|\, \right),
\end{multline*}
where $k$ is the smallest natural number such that $\frac{2\,n}{\kappa_{\varepsilon}} \leq 2^k$. Using \eqref{eq:estimate_split_xi} we conclude
\begin{equation}\label{eq:crucial_estimate_two_UNI_INT_general_domain}
\begin{split}
&\psi\left(x,\left|\nabla \mathcal{S}^{\varepsilon} u\right|\right) \leq \mathcal{N} + \|Y\|_{\infty} + \\
& \qquad  + C_4^k\,
\frac{\mathcal{M}}{2\,n}\, \sum_{i=1}^n \int_{B_{\varepsilon}}   \psi\left(x_i + \frac{x-x_i - y}{\kappa_{\varepsilon}}, \left| (\nabla u \, \theta_i)\left(x_i + \frac{x-x_i - y}{\kappa_{\varepsilon}}\right)\right|\, \right)  \eta_{\varepsilon}(y) \diff y.
\end{split}
\end{equation}
Now, we observe that $\psi\left(x,\left|\nabla \mathcal{S}^{\varepsilon} u\right|\right)$ converges a.e. to $\psi(x, \left|\nabla u\right| )$. Moreover, the first term on (RHS) of \eqref{eq:crucial_estimate_two_UNI_INT_general_domain} is convergent in $L^1(\Omega)$ cf. Lemma \ref{lem:technical_convergence_of_mollification_gen} \ref{item:conv_large_term_general2}. Therefore, Corollary \ref{cor:vitali} (Vitali convergence theorem) implies
$$
\psi(x, \left|\nabla \mathcal{S}^{\varepsilon} u\right|) \to \psi(x, \left|\nabla u\right|) \qquad \mbox{ in } L^1(\Omega)  \mbox{ as } \varepsilon \to 0.
$$
Thanks to triangle inequality we obtain \ref{item:main_thm_gen_item2}. Now, \ref{item:main_thm_gen_item3} follows from Lemma \ref{lem:conv_mus_orlicz_spaces_Delta2} \ref{item:mus-or:4} while property \ref{item:main_thm_gen_item4} follows from Lemma \ref{lem:density_implies_lavr}. \\

\noindent \underline{\textbf{Case 2: $p > d$.}} In this case we have $q \leq p +\alpha\, \frac{p}{d}$. In this situation, instead of \eqref{eq:estimate_infinity_generalcase}, we compute using change of variables
\begin{equation}\label{eq:estimate_infinity2_GENERAL}
\left\| \nabla \mathcal{S}^{\varepsilon} u \right\|_{\infty} \leq \frac{1}{\kappa_{\varepsilon}}\,\sum_{i=1}^n \left\| \nabla (u\, \theta_i) \right\|_{p} \, \left\| \eta_{\varepsilon} \right\|_{p'} \leq 2 \,\sum_{i=1}^n \left\| \nabla (u\, \theta_i) \right\|_{p} \, \left\| \eta_{\varepsilon} \right\|_{p'},
\end{equation}
where $p'$ is the usual H{\"o}lder conjugate exponent. Using change of variables we obtain:
$$
 \left\| \eta_{\varepsilon} \right\|_{p'}^{p'} = 
 \int_{B_{\varepsilon}} \frac{1}{\varepsilon^{d\, p'}} \left|\eta\left(\frac{x}{\varepsilon} \right) \right|^{p'} \diff x = \varepsilon^{d\,(1-p')}  \int_{B} \left|\eta(x)\right|^{p'} \diff x = \varepsilon^{-p'\, \frac{d}{p}}  \| \eta \|_{p'}^{p'},
$$
so that $ \left\| \eta_{\varepsilon} \right\|_{p'} = \varepsilon^{-\frac{d}{p}}  \| \eta \|_{p'}$. Concerning the term with function $u$,
$$
 \left\| \nabla (u\, \theta_i) \right\|_{p} \leq \left\| \nabla u \right\|_{p} \,  \left\| \theta_i \right\|_{\infty} + \left\| u \right\|_{p} \,  \left\| \nabla \theta_i \right\|_{\infty}
 $$
which is finite as $\mathcal{H}(u, \Omega)<\infty$ and $u \in L^{\infty}(\Omega)$. Therefore, \eqref{eq:estimate_infinity2} boils down to
$$
\left\| \nabla \mathcal{S}^{\varepsilon} u  \right\|_{\infty} \leq D\, \left( C_{\Omega,R}\,\varepsilon\right)^{-\frac{d}{p}}, \qquad D:= 2\, C_{\Omega,R}^{\frac{d}{p}}\,\| \eta \|_{p'} \,\sum_{i=1}^n \left(\left\| \nabla u \right\|_{p} \,  \left\| \theta_i \right\|_{\infty} + \left\| u \right\|_{p} \,  \left\| \nabla \theta_i  \right\|_{\infty}\right).
$$
Now, we can apply Lemma \ref{lem:conjugates_estimate_from_both_sides} \ref{convex_conj_estimate_item1} to obtain estimate \eqref{eq:estimate_split_xi}. The rest of the proof is exactly the same.
\end{proof}

\section{Extension of Theorem \ref{thm:main_theorem} to vector-valued maps}\label{sect:var-exp}

\noindent Many authors consider variational problems with vector-valued functions. However, in our work functionals depend only on the length of the gradient so there is almost no difficulty in extending our result to the vector case setting. In this section, we write ${\bf u}=(u^1, ..., u^n)$ for the map $u: \Omega \to \R^n$. For simplicity, we use the same notation for spaces of vector-valued functions as for spaces of scalar-valued ones.\\

\noindent The main point that needs explanation is a generalisation of Lemma~\ref{lem:density_of_bounded_in_musielak}, where we applied truncation to approximate functions from $W^{1,\psi}(\Omega)$ by functions from $W^{1,\psi}(\Omega) \cap L^{\infty}(\Omega)$. 

\begin{lem}\label{lem:component_convergence}
Let $u: \Omega \to \R^n$, ${\bf u}=(u^1, \ldots, u^n)$ where $n \in \N$. Suppose that $u \in W^{1,\psi}(\Omega)$. Then, $u^i \in W^{1,\psi}(\Omega)$. Moreover, suppose that for each $i = 1, \ldots, n$ we have $u^i_k \to u^i$ in $W^{1,\psi}(\Omega)$. Let ${\bf u_k}:= (u^1_k, \ldots, u^n_k)$. Then, ${\bf u_k} \to {\bf u}$ in $W^{1,\psi}(\Omega)$.
\end{lem}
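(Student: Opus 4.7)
The plan is to exploit the structural fact that in the vector-valued setting the Musielak--Orlicz norm depends only on $|\nabla \mathbf{u}|$, where $|\nabla \mathbf{u}|^2 = \sum_{i=1}^n |\nabla u^i|^2$. Thus the whole argument reduces to manipulating pointwise inequalities relating $|\nabla \mathbf{u}|$ to the components $|\nabla u^i|$ and then invoking the characterisations of the norm convergence contained in Lemma~\ref{lem:conv_mus_orlicz_spaces_Delta2}.

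For the first claim I would observe that $|\nabla u^i(x)| \leq |\nabla \mathbf{u}(x)|$ for every $i$ and almost every $x \in \Omega$. Since $\xi \mapsto \psi(x,\xi)$ is non-decreasing on $\R^+$ (being an $N$-function that vanishes only at the origin and is convex), this implies
$$
\int_{\Omega} \psi(x, c\,|\nabla u^i(x)|) \diff x \leq \int_{\Omega} \psi(x, c\,|\nabla \mathbf{u}(x)|) \diff x < \infty
$$
for every $c>0$, where finiteness of the right-hand side comes from $\mathbf{u} \in W^{1,\psi}(\Omega)$ together with~\ref{item:mus-or:1} of Lemma~\ref{lem:conv_mus_orlicz_spaces_Delta2}. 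Applying~\ref{item:mus-or:1} in the reverse direction to $u^i$ yields $u^i \in W^{1,\psi}(\Omega)$.

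For the second claim, the pointwise inequality $|\nabla \mathbf{u}_k - \nabla \mathbf{u}| \leq \sum_{i=1}^n |\nabla u^i_k - \nabla u^i|$ is the starting point. For any $c>0$, convexity of $\psi$ in its second variable gives
$$
\psi\!\left(x, c\,|\nabla \mathbf{u}_k - \nabla \mathbf{u}|\right) \leq \psi\!\left(x, c\sum_{i=1}^n |\nabla u^i_k - \nabla u^i|\right) \leq \frac{1}{n}\sum_{i=1}^n \psi\!\left(x, c\,n\,|\nabla u^i_k - \nabla u^i|\right).
$$
The $\Delta_2$ condition~\ref{ass:Nf_delta2} implies, after iterating $k_0$ times with $k_0$ the smallest integer satisfying $n \leq 2^{k_0}$, that $\psi(x, c\,n\,\xi) \leq C_4^{k_0}\, \psi(x, c\,\xi)$. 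Integrating over $\Omega$ we obtain
$$
\int_{\Omega} \psi\!\left(x, c\,|\nabla \mathbf{u}_k - \nabla \mathbf{u}|\right) \diff x \leq \frac{C_4^{k_0}}{n} \sum_{i=1}^n \int_{\Omega} \psi\!\left(x, c\,|\nabla u^i_k - \nabla u^i|\right) \diff x.
$$
By hypothesis $u^i_k \to u^i$ in $W^{1,\psi}(\Omega)$, so~\ref{item:mus-or:2} of Lemma~\ref{lem:conv_mus_orlicz_spaces_Delta2} applied component-wise implies that each modular on the right-hand side tends to $0$ as $k\to\infty$. Hence the modular on the left-hand side vanishes for every $c>0$, and invoking~\ref{item:mus-or:2} once more yields $\|\nabla \mathbf{u}_k - \nabla \mathbf{u}\|_{\psi} \to 0$. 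Convergence of $\|\mathbf{u}_k - \mathbf{u}\|_1$ follows trivially from the scalar convergences.

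The only point requiring a little care is the interaction of the sum with the $N$-function; this is handled by the convexity trick together with the $\Delta_2$ condition, which is exactly why~\ref{ass:Nf_delta2} was included in Assumption~\ref{ass:Nfunction}. I do not expect a serious obstacle here.
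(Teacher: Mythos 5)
Your proof is correct and follows essentially the same route as the paper: the pointwise bound $|\nabla u^i| \le |\nabla \mathbf{u}|$ together with monotonicity of $\psi$ for the first claim, and $|\nabla\mathbf{u}_k - \nabla\mathbf{u}| \le \sum_i |\nabla u^i_k - \nabla u^i|$ plus convexity and Lemma~\ref{lem:conv_mus_orlicz_spaces_Delta2}~\ref{item:mus-or:2} for the second. Your extra $\Delta_2$ step to strip the factor $n$ out of the argument of $\psi$ is harmless but unnecessary, since~\ref{item:mus-or:2} already allows any constant $c>0$ (including $cn$) in the modular.
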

\begin{proof}
We observe that if we interpret $|\nabla {\bf u}|$ component-wisely, we have $|\nabla u^i| \leq |\nabla {\bf u}|$. By convexity of $\xi \mapsto \psi(x,\xi)$, we have
$$
0 \leq \psi(x,|\nabla {u^i}|) \leq \psi(x,|\nabla {\bf u}|).
$$
To see the second statement, we note
$$
|\nabla {\bf u} - \nabla {\bf u_k}| \leq \sum_{i=1}^n |\nabla {\bf u^i} - \nabla {\bf u_k^i}|,
$$
so that by convexity of the mapping $\xi \mapsto \psi(x,\xi)$
$$
0 \leq \psi(x,|\nabla {\bf u} - \nabla {\bf u_k}|) \leq \frac{1}{n} \sum_{i=1}^n\psi\left(x, n\, |\nabla { u^i} - \nabla { u_k^i}|\right).
$$
Using Lemma~\ref{lem:conv_mus_orlicz_spaces_Delta2} \ref{item:mus-or:2} we conclude that ${\bf u_k} \to {\bf u}$ in $W^{1,\psi}(\Omega)$.
\end{proof}

\begin{thm}\label{thm:main_theorem_vectorial_case}
Suppose that $p \leq q + \alpha \max\left(1, \frac{p}{d} \right)$. Let $\mathcal{H}$ be a functional defined by \eqref{eq:general_functional} with $\psi$ satisfying Assumptions \ref{ass:Nfunction} and \ref{ass:continuity_abstract}. Then, for all ${\bf u_0} \in W^{1,q}(\Omega)$ we have
$$
    \inf_{{\bf u} \in {\bf u_0} + W_0^{1,p}(\Omega)} \mathcal{H}({\bf u},\Omega) = \inf_{{\bf u} \in {\bf u_0}  + W_0^{1,q}(\Omega)} \mathcal{H}({\bf u},\Omega) = \inf_{{\bf u} \in {\bf u_0} + C_c^{\infty}(\Omega)} \mathcal{H}({\bf u},\Omega).
$$
Moreover, space $C_c^{\infty}(\Omega)$ is dense in the Musielak--Orlicz--Sobolev space $W^{1,\psi}_0(\Omega)$.
\end{thm}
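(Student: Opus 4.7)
The plan is to reduce the vector-valued statement to the scalar results already established in Section~\ref{sect:main_result_general} by working component-wise and using Lemma~\ref{lem:component_convergence} to convert scalar $W^{1,\psi}$-convergence of each coordinate into vector-valued $W^{1,\psi}$-convergence. The key observation is that the approximating operator $\mathcal{S}^{\varepsilon}$ from~\eqref{eq:approx_seq_generalOmega} is linear, and the truncation used in Lemma~\ref{lem:density_of_bounded_in_musielak} is defined coordinate-wise, so both operations interact cleanly with the vectorial structure. The target is density of $C_c^{\infty}(\Omega)$ in the vector-valued space $W^{1,\psi}_0(\Omega)$; the Lavrentiev equalities will then follow by the argument of Lemma~\ref{lem:density_implies_lavr}, which relies only on density and on property~\ref{item:mus-or:3} of Lemma~\ref{lem:conv_mus_orlicz_spaces_Delta2}, neither of which uses the scalar nature of the target.

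I would proceed in two steps. Given $\mathbf{u}\in W^{1,\psi}_0(\Omega)$, Lemma~\ref{lem:component_convergence} ensures that each component $u^i$ lies in the scalar space $W^{1,\psi}_0(\Omega)$, so the truncations $T_k(u^i)$ supplied by Lemma~\ref{lem:density_of_bounded_in_musielak} converge to $u^i$ in $W^{1,\psi}(\Omega)$ as $k\to\infty$. Applying the second part of Lemma~\ref{lem:component_convergence} to $\mathbf{T}_k(\mathbf{u}):=(T_k(u^1),\ldots,T_k(u^n))$ yields $\mathbf{T}_k(\mathbf{u})\to\mathbf{u}$ in the vector-valued $W^{1,\psi}(\Omega)$, so it suffices to approximate bounded maps. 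For such a bounded $\mathbf{u}$, each coordinate $u^i$ is scalar, bounded and in $W^{1,\psi}_0(\Omega)$, hence Theorem~\ref{res:approx_theorem_GENERAL} provides $\mathcal{S}^{\varepsilon}u^i\in C_c^{\infty}(\Omega)$ with $\mathcal{S}^{\varepsilon}u^i\to u^i$ in $W^{1,\psi}(\Omega)$ as $\varepsilon\to 0$. A second application of Lemma~\ref{lem:component_convergence} assembles these into $\mathcal{S}^{\varepsilon}\mathbf{u}\to\mathbf{u}$ in the vector-valued $W^{1,\psi}(\Omega)$, establishing density of $C_c^{\infty}(\Omega)$ in $W^{1,\psi}_0(\Omega)$.

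For the Lavrentiev-gap equalities, I would repeat the argument of Lemma~\ref{lem:density_implies_lavr} verbatim with $u$ replaced by $\mathbf{u}$: existence of a minimizer $\mathbf{u}^{\ast}\in\mathbf{u}_0+W^{1,p}_0(\Omega)$ follows from the direct method since $\mathcal{H}$ depends only on $|\nabla\mathbf{u}|$ and so remains coercive and lower semicontinuous in the vectorial setting; the difference $\overline{\mathbf{u}}:=\mathbf{u}^{\ast}-\mathbf{u}_0$ lies in $W^{1,\psi}_0(\Omega)$, the density just proved supplies $\mathbf{u}_n\in C_c^{\infty}(\Omega)$ with $\mathbf{u}_n\to\overline{\mathbf{u}}$ in $W^{1,\psi}(\Omega)$, and Lemma~\ref{lem:conv_mus_orlicz_spaces_Delta2}~\ref{item:mus-or:3} yields $\mathcal{H}(\mathbf{u}_0+\mathbf{u}_n,\Omega)\to\mathcal{H}(\mathbf{u}^{\ast},\Omega)$. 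I do not anticipate any genuine obstacle: the scalar proof of Theorem~\ref{res:approx_theorem_GENERAL} controls $\psi(x,|\nabla u^{\varepsilon}|)$ through pointwise convexity bounds that treat the gradient as a single non-negative scalar, and Lemma~\ref{lem:component_convergence} is tailored precisely to bridge the scalar and vectorial settings.
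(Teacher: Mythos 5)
Your proposal matches the paper's proof essentially line for line: reduce to components via Lemma~\ref{lem:component_convergence}, truncate coordinate-wise to bounded maps, apply the scalar approximation result (Theorem~\ref{res:approx_theorem_GENERAL}, equivalently Theorem~\ref{thm:main_theorem}) to each component, reassemble, and then invoke the argument of Lemma~\ref{lem:density_implies_lavr} for the Lavrentiev equalities. The approach and all the supporting lemmas are the same; the proposal is correct.
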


\begin{proof}
We first prove that $C_c^{\infty}(\Omega)$ is dense in the Musielak--Orlicz--Sobolev space $W^{1,\psi}_0(\Omega)$. This follows from the following facts.
\begin{itemize}
\item $W^{1,\psi}_0(\Omega) \cap L^{\infty}(\Omega)$ is dense in $W^{1,\psi}_0(\Omega)$. Indeed, let ${\bf u} \in W^{1,\psi}_0(\Omega)$ and ${\bf u_k} := (T_k(u^1), ..., T_k(u^n))$. It follows from Lemmas~\ref{lem:density_of_bounded_in_musielak} and \ref{lem:component_convergence} that ${\bf u_k} \to {\bf u}$ in $W^{1,\psi}(\Omega)$.
\item $C_c^{\infty}(\Omega)$ is dense in $W^{1,\psi}_0(\Omega) \cap L^{\infty}(\Omega)$. Indeed, let ${\bf u} \in W^{1,\psi}_0(\Omega) \cap L^{\infty}(\Omega)$. Then, each $u^i \in W^{1,\psi}_0(\Omega) \cap L^{\infty}(\Omega)$. By Theorem \ref{thm:main_theorem}, we have a sequence $\{u^i_k\}_{k \in \N}$ such that $u^i_k \to u^i$ in $W^{1,\psi}(\Omega)$. Let ${\bf u_k}:= (u^1_k, ... u^n_k)$. By Lemma \ref{lem:component_convergence}, ${\bf u_k} \to {\bf u}$ in $W^{1,\psi}(\Omega)$.
\end{itemize}
Having density of $C_c^{\infty}(\Omega)$ in $W^{1,\psi}_0(\Omega)$ in hand, the absence of Lavrentiev phenomenon follows as in the proof of Lemma \ref{lem:density_implies_lavr}.
\end{proof}

\appendix
\section{Supplementary material}
\subsection{Vitali convergence theorem}
\noindent In this section we recall a convergence result that is used several times in this paper. For the proof, see \cite[Theorem 4.5.4]{MR2267655}.
\begin{thm}\label{thm:vitali}
Let $(X,\mathcal{F},\mu)$ be a finite measure space (i.e. $\mu(X)<\infty$). Let $\{f_n\}_{n \in \N} \subset L^1(X,\mathcal{F},\mu)$ and $f$ be an $\mathcal{F}$-measurable function. Then, $f_n \to f$ in $L^1(X,\mathcal{F},\mu)$ if and only if $f_n \to f$ in measure and $\{f_n\}_{n \in \N}$ is uniformly integrable, i.e.
$$
\forall{\varepsilon>0} \, \exists{\delta>0} \, \forall{A \in \mathcal{F}} \, \qquad \mu(A) < \delta \implies \sup_{n \in \N} \int_{A} |f_n| \diff \mu < \varepsilon.
$$
\end{thm}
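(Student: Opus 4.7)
The plan is to prove each direction separately and exploit the finiteness of $\mu$ throughout.

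For the forward direction, assume $f_n \to f$ in $L^1$. Convergence in measure is immediate from Markov's inequality: for every $\varepsilon>0$,
$$
\mu(\{|f_n-f|>\varepsilon\}) \leq \frac{1}{\varepsilon}\int_X |f_n-f|\diff \mu \to 0.
$$
For uniform integrability, fix $\varepsilon>0$ and pick $N$ with $\|f_n-f\|_1<\varepsilon/2$ for $n\geq N$. By absolute continuity of the Lebesgue integral applied to the finitely many functions $f$ and $f_1,\ldots,f_N$, one finds $\delta>0$ such that $\mu(A)<\delta$ forces $\int_A |f|\diff\mu <\varepsilon/2$ and $\int_A |f_k|\diff\mu<\varepsilon$ for $k\leq N$. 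For $n>N$ one then splits $\int_A|f_n|\leq \int_A|f_n-f|+\int_A|f|<\varepsilon$. This yields the uniform-integrability estimate.

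For the reverse direction, assume $f_n\to f$ in measure and $\{f_n\}$ is uniformly integrable. First extract a subsequence $f_{n_k}\to f$ a.e. (this uses the standard measure-theoretic fact that convergence in measure gives an a.e.-convergent subsequence). Then Fatou's lemma combined with uniform integrability gives $f\in L^1$: indeed, for $\delta$ from the uniform-integrability definition with $\varepsilon=1$ and any measurable $A$ with $\mu(A)<\delta$ we have $\int_A |f|\diff\mu\leq \liminf_k\int_A|f_{n_k}|\diff\mu\leq 1$, so $\int_X|f|\diff\mu$ is finite by covering $X$ with finitely many such sets (using $\mu(X)<\infty$). Moreover, $\{|f_n-f|\}$ is uniformly integrable too, because $\int_A |f_n-f|\diff\mu \leq \int_A|f_n|\diff\mu+\int_A|f|\diff\mu$ and both terms can be made small uniformly in $n$ by choosing $\mu(A)$ small (using again absolute continuity for $f$).

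The core step is now a standard three-piece split. Fix $\varepsilon>0$, choose $\delta>0$ so that $\mu(A)<\delta$ implies $\sup_n\int_A|f_n-f|\diff\mu<\varepsilon/2$, and set
$$
A_n := \{x\in X : |f_n(x)-f(x)|>\eta\}, \qquad \eta:= \frac{\varepsilon}{2\,\mu(X)+1}.
$$
Convergence in measure guarantees $\mu(A_n)<\delta$ for all $n$ sufficiently large. Then
$$
\int_X |f_n-f|\diff\mu = \int_{A_n}|f_n-f|\diff\mu + \int_{X\setminus A_n}|f_n-f|\diff\mu \leq \frac{\varepsilon}{2} + \eta\,\mu(X) < \varepsilon,
$$
so $f_n\to f$ in $L^1$.

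The main delicate point is establishing that $f\in L^1$ and that $\{|f_n-f|\}$ inherits uniform integrability from $\{f_n\}$; everything else reduces to the clean three-piece estimate above. Finiteness of $\mu$ is used in two crucial places: extracting a finite covering to bound $\int_X|f|\diff\mu$, and controlling the contribution of the set where $|f_n-f|\leq \eta$.
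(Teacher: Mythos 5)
You are proving a statement the paper itself does not prove: Theorem \ref{thm:vitali} is quoted with a pointer to \cite[Theorem 4.5.4]{MR2267655}, so there is no internal argument to compare against. Your proof follows the standard textbook route, and most of it is sound: the forward direction via Markov's inequality plus absolute continuity of the integral for the finitely many functions $f, f_1,\dots,f_N$ is fine (only a cosmetic point: to get the strict inequality $\sup_n\int_A|f_n|\,d\mu<\varepsilon$ run the same argument with $\varepsilon/2$), and in the reverse direction the transfer of uniform integrability to $\{|f_n-f|\}$ and the final three-piece split with $\eta=\varepsilon/(2\mu(X)+1)$ are correct once $f\in L^1$ is known.

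The one step that does not hold as written is the justification that $f\in L^1$: you claim $\int_X|f|\,d\mu<\infty$ ``by covering $X$ with finitely many such sets'' of measure $<\delta$. In a general finite measure space this covering need not exist: if $\mu$ has an atom $E$ with $\mu(E)\ge\delta$, then every measurable $A$ with $\mu(A)<\delta$ satisfies $\mu(A\cap E)=0$, so no finite (indeed no countable) family of such sets can cover $E$; the extreme case is a one-point space of mass $1$. Since the theorem is stated for arbitrary finite measure spaces, this step needs a patch. A clean fix: there are only finitely many atoms of measure $\ge\delta$; on each atom convergence in measure forces $f_n$ to converge to the (a.e.\ constant, finite) value of $f$ there, so $f$ is integrable over those atoms; the remaining part of $X$ carries no atom of measure $\ge\delta$ and can be partitioned into finitely many sets of measure $<\delta$ (grouping the small atoms and using the intermediate-value property of the non-atomic part), where your Fatou estimate $\int_A|f|\,d\mu\le\liminf_k\int_A|f_{n_k}|\,d\mu\le 1$ applies. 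Alternatively, and sufficient for every use of the theorem in this paper, one may simply note that here $\mu$ is Lebesgue measure on a bounded set, which is non-atomic, so the finite covering by sets of measure $<\delta$ is immediate. With that repair your argument is complete.
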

\noindent In the proof of the main results, we have applied the following corollary.
\begin{cor}\label{cor:vitali}
Let $(X,\mathcal{F},\mu)$ be a finite measure space (i.e. $\mu(X)<\infty$). Let $\{f_n\}_{n \in \N} \subset L^1(X,\mathcal{F},\mu)$ be a nonnegative sequence and $f$ be an $\mathcal{F}$-measurable function. Suppose that
\begin{enumerate}[label=(J\arabic*)]
    \item $f_n \to f$ in measure,
    \item there exists a sequence of functions $\{g_n\}_{n \in \N}$ convergent in $L^1(X,\mathcal{F},\mu)$ and function $h \in L^1(X,\mathcal{F},\mu)$ such that
    $$
    0 \leq f_n \leq g_n + h.
    $$
\end{enumerate}
Then, $f_n \to f$ in $L^1(X,\mathcal{F},\mu)$.
\end{cor}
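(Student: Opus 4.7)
The plan is to deduce the conclusion directly from Theorem~\ref{thm:vitali}. Since convergence in measure is already assumed in (J1), the only thing to verify is that the sequence $\{f_n\}_{n\in\N}$ is uniformly integrable in the sense of Theorem~\ref{thm:vitali}. The strategy is to transfer uniform integrability from the dominating family $\{g_n+h\}_{n\in\N}$ to $\{f_n\}_{n\in\N}$ via the pointwise bound $0\le f_n\le g_n+h$.

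First, I would establish that a single function $\varphi\in L^1(X,\mathcal{F},\mu)$ is uniformly integrable as a one-element family: this is the standard absolute continuity of the Lebesgue integral, namely for every $\varepsilon>0$ there exists $\delta>0$ with $\int_A|\varphi|\diff\mu<\varepsilon$ whenever $\mu(A)<\delta$. Next, I would show that an $L^1$-convergent sequence $g_n\to g$ is uniformly integrable. Given $\varepsilon>0$, pick $N$ so large that $\|g_n-g\|_1<\varepsilon/2$ for $n\ge N$, and then pick $\delta>0$ such that for every set $A\in\mathcal{F}$ with $\mu(A)<\delta$ one has $\int_A|g|\diff\mu<\varepsilon/2$ and simultaneously $\int_A|g_n|\diff\mu<\varepsilon$ for each of the finitely many indices $n=1,\dots,N-1$. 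Then for $n\ge N$ we estimate $\int_A|g_n|\diff\mu\le\int_A|g_n-g|\diff\mu+\int_A|g|\diff\mu<\varepsilon$, so uniform integrability of $\{g_n\}$ follows.

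Adding the single function $h\in L^1$ preserves uniform integrability, since $\int_A(|g_n|+|h|)\diff\mu<2\varepsilon$ on sufficiently small sets $A$. Because $0\le f_n\le g_n+h$, we have $\int_A f_n\diff\mu\le\int_A(|g_n|+|h|)\diff\mu$, and therefore $\{f_n\}_{n\in\N}$ is itself uniformly integrable. Combining this with (J1), Theorem~\ref{thm:vitali} applies and yields $f_n\to f$ in $L^1(X,\mathcal{F},\mu)$.

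I do not expect any serious obstacle here; the only mildly delicate point is proving uniform integrability of an $L^1$-convergent sequence, which requires splitting into the finite set of small indices (handled by absolute continuity of finitely many integrals) and the tail (handled by the Cauchy estimate together with uniform integrability of the single limit $g$). Everything else is bookkeeping.
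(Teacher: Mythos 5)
Your proposal is correct and follows essentially the same route as the paper: dominate $\int_A f_n\,\mathrm{d}\mu$ by $\int_A |g_n+h|\,\mathrm{d}\mu$, deduce uniform integrability of $\{f_n\}$ from that of the $L^1$-convergent sequence $\{g_n+h\}$, and conclude via Theorem~\ref{thm:vitali}. The only difference is cosmetic: you prove by hand (finitely many indices plus the Cauchy tail) that an $L^1$-convergent sequence is uniformly integrable, whereas the paper simply invokes this fact, which is also the ``only if'' direction of Theorem~\ref{thm:vitali}.
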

\begin{proof}
In view of Theorem \ref{thm:vitali}, it is sufficient to prove that $\{f_n\}_{n \in \N}$ is uniformly integrable. To this end, for an arbitrary set $A$, we have
$$
\int_{A} |f_n| \diff \mu = \int_{A} f_n \diff \mu \leq \int_{A} g_n \diff \mu + \int_{A} h \diff \mu \leq \int_{A} |g_n + h| \diff \mu.
$$
Let $\varepsilon > 0$. As $\{g_n\}_{n \in \N}$ is convergent in $L^1(X,\mathcal{F},\mu)$, the same is true for $\{g_n + h\}_{n \in \N}$. It follows that $\{g_n +h\}_{n \in \N}$ is uniformly integrable. Therefore, there exists ${\delta}>0$ such that if $\mu(A) < {\delta}$, we have $\int_{A} |g_n + h| \diff \mu < \varepsilon$. It follows that
$$
\int_{A} |f_n| \diff \mu <  \varepsilon.
$$

\end{proof}

\subsection{Proof of Lemma \ref{lem:conv_mus_orlicz_spaces_Delta2}}\label{app:proof_of_orlicz_lemma}
\begin{proof}
The first equivalence in \ref{item:mus-or:1} follows directly from definition of the norm \eqref{eq:musor_norm} so in fact it is sufficient to prove that if $ \int_{\Omega} \psi(x,c|f(x)|) \diff x < \infty$ for some $c>0$ then $ \int_{\Omega} \psi(x,d|f(x)|) \diff x < \infty$ for all $d>0$. First, if $d < c$, this follows by convexity and Jensen's inequality:
\begin{equation}\label{eq:comp_cd}
\int_{\Omega} \psi(x,d|f(x)|) \diff x = \int_{\Omega} \psi\left(x,\frac{d}{c} \, c|f(x)| + 0 \right) \diff x \leq \frac{d}{c} \int_{\Omega} \psi(x,c|f(x)|) \diff x.
\end{equation}
If $d > c$, we find $k \in \N$ such that $d \leq 2^k \,c$ and apply \ref{ass:Nf_delta2} in Assumption \ref{ass:Nfunction}:
\begin{equation}\label{eq:comp_dc}
\int_{\Omega} \psi(x,d|f(x)|) \diff x \leq C_4^k \, \int_{\Omega} \psi\left(x,\frac{d}{2^k}|f(x)|\right) \diff x \leq  C_4^k \frac{d}{2^k c}  \int_{\Omega} \psi(x,c|f(x)|) \diff x.
\end{equation}
where we used the first part. \\

\noindent Concerning \ref{item:mus-or:2}, we first prove equivalence:
$$
\|f_n - f \|_{\psi} \to 0 \iff \int_{\Omega} \psi(x, c\,|f_n - f|) \diff x \to 0 \mbox{ for all } c>0.
$$
To prove ($\Rightarrow$) we fix $c>0$ and we note that there exists $n_c$ such that for all $n \geq n_c$ we have $c\,\|f_n - f \|_{\psi} < 1$. By definition \eqref{eq:musor_norm}, there exists a sequence $\{\delta_k\}_{k \in \N}$ convergent to 0 such that $c\,\|f_n - f \|_{\psi} + \delta_k < 1$ and 
$$
\int_{\Omega} \psi\left(x, \frac{c\,|f_n - f|}{c\, \|f_n - f \|_{\psi} + \delta_k } \right) \leq 1.
$$
Using convexity of $\psi$ and equality $\psi(x,0) = 0$ we obtain
$$
\int_{\Omega} \psi\left(x, {c\,|f_n - f|} \right) \leq \left(\|f_n - f \|_{\psi} + \delta_k\right)\, \int_{\Omega} \psi\left(x, \frac{c\,|f_n - f|}{ \|f_n - f \|_{\psi} + \delta_k } \right) \leq c\, \|f_n - f \|_{\psi} + \delta_k.
$$
Letting $k \to \infty$ (so that $\delta_k \to 0$) and $n \to \infty$ we conclude the proof. For $(\Leftarrow)$, we note that for each $c > 0$, there exists $n_c$ such that for all $n \geq n_c$ we have $ \int_{\Omega} \psi(x, c\,|f_n - f|) \diff x \leq 1$, i.e. $\|f_n - f\|_{\psi} \leq \frac{1}{c}$. The conclusion follows by letting $c \to \infty$. We are left to prove equivalence 
$$
\int_{\Omega} \psi(x, c\,|f_n - f|) \diff x \to 0 \mbox{ for all } c>0 \iff \int_{\Omega} \psi(x, c\,|f_n - f|) \diff x \to 0 \mbox{ for some } c>0.
$$
This follows from \eqref{eq:comp_cd} and \eqref{eq:comp_dc}. 
\\

\noindent To prove \ref{item:mus-or:3} we assume that $\int_{\Omega} \psi(x, |f_n - f|) \diff x \to 0$ and $\|f\|_{\psi} < \infty$ which implies $\int_{\Omega}\psi(x,|f|) \diff x < \infty$. First, thanks to the lower bound cf. \ref{ass:lowerbound} in Assumption \ref{ass:Nfunction} we have $m_{\psi}(|f_n - f|) \to 0$ in $L^1(\Omega)$ so that $f_n \to f$ in measure. Second, we can estimate
\begin{multline*}
0 \leq \psi(x, |f_n|) \leq \psi \left(x, \frac{1}{2}\,2|f_n-f| + \frac{1}{2}\,2|f| \right) \leq \\ \leq \frac{1}{2} \psi(x, 2|f_n-f|) + \frac{1}{2} \psi(x,2|f|) \leq 
\frac{C_4}{2} \psi(x,|f_n-f|) + \frac{C_4}{2} \psi(x,|f|). 
\end{multline*}
Corollary \ref{cor:vitali} implies that $\psi(x, |f_n|) \to \psi(x, |f|)$ in $L^1(\Omega)$ so in particular, $\int_{\Omega}\psi(x,|f_n|) \diff x \to \int_{\Omega}\psi(x,|f|) \diff x$.\\

\noindent Concerning \ref{item:mus-or:4}, in view of Vitali convergence theorem cf. Theorem \ref{thm:vitali}, it is sufficient to prove that the sequence $\{\psi(x, f_n - f)\}_n$ is uniformly integrable. Using convexity and $\Delta_2$ condition we obtain
\begin{multline*}
0 \leq \psi(x, |f_n - f|) \leq \psi \left(x, \frac{1}{2}\,2|f_n| + \frac{1}{2}\,2|f| \right) \leq \\ \leq \frac{1}{2} \psi(x, 2|f_n|) + \frac{1}{2} \psi(x,2|f|) \leq 
\frac{C_4}{2} \psi(x,|f_n|) + \frac{C_4}{2} \psi(x,|f|). 
\end{multline*}
It follows that $\psi(x, |f_n - f|) \to 0$ in $L^1(\Omega)$ and the conclusion follows from \ref{item:mus-or:2}.\\

\noindent Finally, to show \ref{item:mus-or:5}, we have to prove that if $\int_{\Omega} \psi(x,|f(x)|) \diff x < \infty$ then $\int_{\Omega} |f(x)|^p \diff x < \infty$. To this end, we estimate
$$
\int_{\Omega} |f(x)|^p \diff x = \int_{\Omega \cap \{|f| \leq \xi_0\} } |f(x)|^p \diff x + \int_{\Omega \cap \{|f| \geq \xi_0\} } |f(x)|^p \diff x \leq |\xi_0|^p + \int_{\Omega} \varphi(x,|f(x)|) \diff x
$$
where we used \ref{ass:Nf_pq} in Assumption \ref{ass:Nfunction}.

\end{proof}

\bibliographystyle{abbrv}
\bibliography{parpde_mo_discmodul}

\end{document}